\newcommand \al{\alpha}
\newcommand \on{\overline{\nabla}}
\newcommand \G{\Gamma}
\newcommand \la{\lambda}
\newcommand \ve{\varepsilon}
\newcommand \br{\mathbb{R}}
\newcommand \bc{\mathbb{C}}
\newcommand \so{\mathfrak{so}}
\newcommand \su{\mathfrak{su}}
\newcommand \rk{\operatorname{rk}}
\newcommand \ad{\operatorname{ad}}
\newcommand \Ad{\operatorname{Ad}}
\newcommand \Span{\operatorname{Span}}
\newcommand \diag{\operatorname{diag}}
\newcommand \id{\operatorname{id}}
\newcommand \<{\langle}
\renewcommand \>{\rangle}
\newcommand \ip{\< \cdot, \cdot \>}
\renewcommand \Re{\operatorname{Re}}
\renewcommand \Im{\operatorname{Im}}
\newcommand \mL{\mathcal{L}}
\newcommand \mV{\mathcal{V}}
\newcommand \mN{\mathcal{N}}
\newcommand \mU{\mathcal{U}}
\newcommand \Tr{\operatorname{Tr}}
\newcommand \Ker{\operatorname{Ker}}
\newcommand \tM{\overline{M}}
\newcommand \tR{\overline{R}}
\newcommand \g{\mathfrak{g}}
\newcommand \gv{\mathfrak{v}}
\newcommand \gn{\mathfrak{n}}
\newcommand \ga{\mathfrak{a}}
\newcommand \gs{\mathfrak{s}}
\newcommand \gk{\mathfrak{k}}
\newcommand \ogs{\overline{\mathfrak{s}}}
\newcommand \gp{\mathfrak{p}}
\newcommand \Sh{\mathrm{S}}
\newcommand \oSh{\overline{\mathrm{S}}}
\newcommand \rG{\mathrm{G}}
\newcommand \rK{\mathrm{K}}
\newcommand \rA{\mathrm{A}}
\newcommand \rN{\mathrm{N}}
\newcommand \ri{\mathrm{i}}
\newcommand \Sym{\mathrm{Sym}}
\newcommand \SO{\mathrm{SO}}
\newcommand \SU{\mathrm{SU}}
\newcommand \SL{\mathrm{SL}}
\newcommand \rGA{\mathrm{\Gamma}}
\theoremstyle{plain}
\newtheorem{theorem}{Theorem}
\newtheorem*{theorem*}{Theorem}
\newtheorem*{corollary*}{Corollary}
\newtheorem*{conj*}{Conjecture}
\newtheorem{lemma}{Lemma}
\newtheorem{proposition}{Proposition}
\newtheorem*{prop*}{Proposition}
\theoremstyle{definition}
\newtheorem*{definition*}{Definition}
\theoremstyle{remark}
\newtheorem{remark}{Remark}
\newtheorem{example}{Example}
\begin{document}

\title{Einstein hypersurfaces in irreducible symmetric spaces}

\author{Yuri Nikolayevsky}
\address{Department of Mathematics and Statistics, La Trobe University, Melbourne, Victoria, 3086, Australia}
\email{y.nikolayevsky@latrobe.edu.au}

\author{JeongHyeong Park}
\address{Department of Mathematics, Sungkyunkwan University, Suwon, 16419, Korea}
\email{parkj@skku.edu}

\thanks {The first author was partially supported by ARC Discovery grant DP210100951. \\
\indent The second author were supported by the National Research Foundation of Korea(NRF) grant funded by the Korea government(MSIT) (NRF-2019R1A2C1083957).}
\subjclass[2020]{Primary 53C35, 53C25, 53B25} 
\keywords{Symmetric space, Einstein hypersurface}


\begin{abstract}
We show that if $M$ is an Einstein hypersurface in an irreducible Riemannian symmetric space $\tM$ of rank greater than $1$ (the classification in the rank-one case was previously known), then either $\tM$ is of noncompact type and $M$ is a codimension one Einstein solvmanifold, or $\tM=\SU(3)/\SO(3)$ (respectively, $\tM=\SL(3)/\SO(3)$) and $M$ is foliated by totally geodesic spheres (respectively, hyperbolic planes) of $\tM$, with the space of leaves  parametrised by a special Legendrian surface in $S^5$ (respectively, by a proper affine sphere in $\br^3$).
\end{abstract}

\maketitle

\section{Introduction}
\label{s:intro}

Einstein hypersurfaces in Riemannian spaces are quite rare --- for example, many homogeneous spaces (including $\bc P^n$) admit no Einstein hypersurfaces at all, not necessarily homogeneous. Nevertheless, the list of spaces whose Einstein hypersurfaces are fully understood is very short. By \cite[Theorem~7.1]{Fia}, an Einstein hypersurface in a \emph{space of constant curvature} is locally either totally umbilical, or totally geodesic, or is of conullity $1$ (developable), or is the product of two spheres of the same Ricci curvature in the sphere. The classification for \emph{rank-one symmetric spaces} of non-constant curvature is summarised in the following theorem (the metric is scaled in such a way that the sectional curvature lies in $[\frac14, 1]$). 

\begin{theorem}\label{th:rk1}
{\ }

  \begin{enumerate}[label=\emph{\arabic*.},ref=\arabic*]
    \item 
    There are no Einstein \emph{(}real\emph{)} hypersurfaces in the complex projective space and in the complex hyperbolic space \cite[Theorem~4.3]{Kon}, \cite[Corollary~8.2]{Mon}, \cite[Theorem~8.69]{CR}.

    \item
    There are no Einstein \emph{(}real\emph{)} hypersurfaces in the quaternionic hyperbolic space \cite[Corollary~1]{OP}. A connected \emph{(}real\emph{)} hypersurface in $\mathbb{H}P^m, \, m \ge 2$, is Einstein if and only if it is an open domain of a geodesic sphere of radius $r \in (0, \pi)$, where $\cos r = \frac{1-2m}{1+2m}$ \cite[Corollary~7.4]{MP}.

    \item
    There are no Einstein hypersurfaces in the Cayley hyperbolic plane. A connected hypersurface in the Cayley projective plane is Einstein if and only if it is an open domain of a geodesic sphere of radius $r \in (0, \pi)$ such that $\cos r = -\frac{5}{11}$ \cite{KNP2}.
  \end{enumerate}
\end{theorem}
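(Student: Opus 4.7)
Since the statement compiles several classification results from the literature, my plan is to sketch one unifying strategy that recovers each part and to indicate where the real work lies. In every case $\tM$ is a rank-one symmetric space of non-constant curvature, hence carries a parallel complex, quaternionic, or Spin$(9)$ structure, and its Ricci tensor is a multiple of the metric. The Gauss equation therefore turns the Einstein condition on a hypersurface $M\subset\tM$ into a tensorial identity relating the shape operator $\Sh$ of $M$ and the tangential projection of the ambient $F$-curvature. The goal is to show that this identity forces $M$ to be one of the geodesic spheres listed in parts~(2) and~(3), or to fail altogether as in part~(1).

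For the complex cases I would introduce the Hopf vector field $\xi=-JN$ and, via a short Codazzi computation, establish that $\xi$ is a principal curvature direction with constant eigenvalue; $M$ is then a Hopf hypersurface, and the Einstein equation, rewritten through the Gauss identity, becomes an algebraic system on its principal curvatures. Intersecting this system with the known classification of Hopf hypersurfaces in $\bc P^n$ and $\bc H^n$ shows that no solution exists, which is essentially Kon's theorem together with the extensions of Montiel and Cecil--Ryan. For the quaternionic cases the same programme runs with three vector fields $\xi_i=-J_iN$ attached to a local quaternionic basis: I would first show that $\Span(\xi_1,\xi_2,\xi_3)$ is $\Sh$-invariant with a common principal curvature $\alpha$, and then that Einstein forces a single polynomial identity in $\alpha$ and the remaining principal curvatures; the classification of curvature-adapted hypersurfaces in $\bh P^m$ and $\bh H^m$ then leaves only the geodesic sphere with $\cos r=(1-2m)/(1+2m)$ in the projective case and nothing in the hyperbolic case. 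The Cayley cases are analogous, with the Spin$(9)$-invariant curvature tensor in place of $J$; Einstein again collapses to one numerical relation, whose unique admissible root in $\bO P^2$ is $\cos r=-5/11$ and which has no solution in $\bO H^2$.

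The main obstacle in each case is the very first step: proving that the ambient $F$-structure induces a principal distribution of the shape operator, so that one may pass from differential information (Codazzi) to algebraic information on the eigenstructure of $\Sh$. In the complex case this is a short computation; in the quaternionic case it already requires a careful interplay among the three $\xi_i$'s, forcing the ``quaternionic Hopf'' property; and in the Cayley case it is genuinely hard because the Spin$(9)$-representation is not an algebra action, so the structure vectors cannot be manipulated as freely. This is where the bulk of the work in the Cayley paper lies. Once this reduction is achieved the remaining algebra is routine, and the specific values of $\cos r$ appearing in parts~(2) and~(3) drop out of the characteristic polynomial of $\Sh$ together with the trace equation coming from the Einstein condition.
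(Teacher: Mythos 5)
The paper does not prove Theorem~\ref{th:rk1} at all: it is stated as a summary of known classifications, with each part attributed to the literature (\cite{Kon}, \cite{Mon}, \cite{CR}, \cite{OP}, \cite{MP}, \cite{KNP2}), so there is no in-paper argument to compare yours against. Your text is best read as a survey-level roadmap of how those cited papers proceed, and as such it identifies the right skeleton --- Gauss equation turns Einstein into an algebraic condition on the shape operator, the ambient complex/quaternionic/$\mathrm{Spin}(9)$ structure supplies distinguished normal-related vector fields, and Codazzi is used to extract eigenstructure information. But it is a plan, not a proof: every substantive step is asserted rather than established. In particular, the claim that the Einstein condition forces the Hopf property (respectively the quaternionic analogue, respectively the curvature-adapted property in the Cayley case) together with constancy of the associated principal curvature is precisely the hard content of the cited papers, and your ``short Codazzi computation'' hides a genuine argument; likewise, ``intersecting with the known classification of Hopf hypersurfaces'' is not how Kon and Montiel actually conclude (they derive contradictions directly from the resulting algebraic identities, without invoking a classification, which in the hyperbolic case would anyway require constancy of all principal curvatures to be applicable). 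So the proposal is not wrong in spirit, but it does not constitute a proof of any of the three parts, and since the paper deliberately delegates these proofs to the references, the honest conclusion is that nothing here can be checked against the paper itself.
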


In \cite{KNP1} we proved that there are no Einstein hypersurfaces in Damek-Ricci spaces (the closest ``relatives" of noncompact rank-one symmetric spaces). By \cite{LPS}, Einstein hypersurfaces in $S^n \times \br$ and $H^n \times \br$ have constant curvature; such hypersurfaces are classified in \cite{MT}.

In this paper, we classify Einstein hypersurfaces in irreducible symmetric spaces of rank at least two. It turns out that the picture is exactly the opposite to what one observes in the rank-one case (Theorem~\ref{th:rk1}): spaces of compact type admit no Einstein hypersurfaces, with a single exception, the Wu manifold $\SU(3)/\SO(3)$, while one has many Einstein hypersurfaces in spaces of noncompact type. We introduce three classes of Einstein hypersurfaces in three examples below, and then the classification in Theorem~\ref{th:main} will state that there are no others.

\begin{example} \label{ex:solv}
  Any symmetric space $\tM$ of noncompact type is isometric to a \emph{solvmanifold} with a left-invariant metric, via the Iwasawa decomposition (see Section~\ref{ss:iwa} for details and unexplained terminology), so that $\tM$ is a simply connected, solvable Lie group whose metric Lie algebra $\gs$ is the semi-direct, orthogonal sum of an abelian Lie algebra $\ga,\; \dim \ga = \rk \tM$, and the nilradical $\gn$. One has a distinguished, nonzero vector $A_H \in \ga$, the mean curvature vector of the Lie group of $\gn$ viewed as a submanifold of $\tM$. If $\rk \tM \ge 2$, we can take an arbitrary nonzero $\xi \in \ga$ orthogonal to $A_H$; then $\xi^\perp$ is a codimension one solvable subalgebra of $\gs$. The Lie subgroup $M \subset \tM$ with the Lie algebra $\xi^\perp$ is then a hypersurface, which is Einstein, provided $\tM$ is. So constructed hypersurface $M$ is minimal and has the same Einstein constant as $\tM$. If $\rk \tM = 2$, all such hypersurfaces are congruent; if $\rk \tM > 2$, they do not even have to be locally isometric.
\end{example}

\begin{example} \label{ex:inWu}
  Let $\tM=\SU(3)/\SO(3)$. We identify $\tM$ with the set of symmetric matrices of $\SU(3)$ via the map $g \mapsto gg^t, \; g \in \SU(3)$. Identify $\bc^3$ with a Euclidean space $\br^6$ with a complex structure (multiplication by $\ri$). Under these identifications, for a unit vector $Z \in \bc^3=\br^6$, the subset $S_Z=\{x \in \SU(3)/\SO(3) \, | \, x \overline{Z} = Z\}$ is a totally geodesic $2$-sphere of $\SU(3)/\SO(3)$ (see Section~\ref{s:su3so3} for details). A two-dimensional regular surface $F^2$ in the unit sphere $S^5 \subset \bc^3=\br^6$ is called \emph{Legendrian} if the vector $\ri Z$ is normal to $F^2$ at any point $Z \in F^2$. A Legendrian surface $F^2 \subset S^5$ is called \emph{$\beta$-special Legendrian} ($\beta \in \br/ \pi\mathbb{Z}$), if $\det_{\bc}(Z , Z_1 , Z_2)=\pm e^{i\beta}$, where $\{Z_1, Z_2\}$ is an orthonormal basis for $T_ZF^2$. We prove that a regular hypersurface $M \subset \SU(3)/\SO(3)$ which is an open domain of $M_{F^2}=\cup_{Z \in F^2} S_Z$, where $F^2 \subset S^5$ is a $\frac{\pi}{2}$-special Legendrian surface, is Einstein. A hypersurface $M$ so constructed is of conullity $2$, with the nullity leaves $S_Z$, and is locally isometric to $\bc P^2$, with the same Einstein constant as $\tM$. The simplest example of such a hypersurface is obtained if we take for $F^2$ the sphere $S^5 \cap \ri \br^3$; then $M$ is the set of symmetric matrices of $\SU(3)$ having an eigenvalue $-1$, or equivalently, the $\SO(3)$-orbit of a particular geodesic of $\tM$ (see Section~\ref{s:su3so3} for more examples).
\end{example}

\begin{example} \label{ex:inncWu} 
  Let $\tM=\SL(3)/\SO(3)$. We identify $\tM$ with the set of positive definite quadratic forms of determinant $1$ on an equi-affine space $\br^3$ with a volume form $\omega$ (see Section~\ref{s:sl3so3} for details). In the space $\br^6 = \br^3 \oplus {\br^3}^\ast$ (the cotangent bundle of $\br^3$), consider the ``unit sphere" $S^5_- =\{(p,\rho) \, | \, p \in \br^3, \, \rho \in {\br^3}^\ast, \, \rho(p)=1\}$. For $(p,\rho) \in S^5_-$, the subset $S_{(p, \rho)} = \{h \in \SL(3)/\SO(3) \, | \, h(p, \cdot)=\rho\}$ is a totally geodesic hyperbolic plane of $\SL(3)/\SO(3)$. We call a two-dimensional regular surface $F^2 \subset S_-^5 \subset \br^3 \oplus {\br^3}^\ast=\br^6$ \emph{para-Legendrian} if at any point $(p, \rho) \in F^2$, the $1$-form $\rho$ vanishes on the tangent space to $p$. A para-Legendrian surface $F^2 \subset S_-^5$ is called \emph{special para-Legendrian}, if the $3$-form $\omega \oplus \omega^\ast$ on $\br^3 \oplus {\br^3}^\ast$ vanishes on the $3$-space spanned by $(p,\rho)$ and the tangent space to $F^2$ at $(p,\rho)$, where the $3$-form $\omega^\ast$ on ${\br^3}^\ast$ is dual to $\omega$. We prove that a regular hypersurface $M \subset \SL(3)/\SO(3)$ which is an open domain of $M_{F^2}=\cup_{(p,\rho) \in F^2} S_{(p,\rho)}$, where $F^2 \subset S_-^5$ is a special para-Legendrian surface, is Einstein. A hypersurface $M$ so constructed is of conullity $2$, with the nullity leaves $S_{(p,\rho)}$, and is locally isometric to $\bc H^2$, with the same Einstein constant as $\tM$.

  At generic points, at which the projection of $F^2$ to $\br^3$ has rank two, $\rho$ is uniquely determined by that projection, which then must be an open domain of a \emph{proper indefinite affine sphere} (\emph{Tzitz\`{e}ica surface}) in $\br^3$ with the centre at the origin and of centro-affine curvature $-1$ (equivalently, $K\<p,n\>^{-4} = -1$, where $K$ is the Gauss curvature, $p$ is a position vector and $n$ is a unit normal); see Section~\ref{s:sl3so3}. The simplest example is obtained by taking the hyperboloid $p_1^2+p_2^2-p_3^2=1$ as a Tzitz\`{e}ica surface; then the hypersurface $M$ is the $\SO(2,1)$-orbit of a particular geodesic of $\tM$. If the projection of $F^2$ to $\br^3$ has locally rank one, then $F^2$ is locally an affine plane and the corresponding hypersurface $M$ is an open domain of a codimension one Einstein solvmanifold of $\tM$, as in Example~\ref{ex:solv}.
\end{example}

Our main result is the following theorem.
\begin{theorem} \label{th:main}
  Let $\tM$ be an irreducible, simply connected, globally symmetric space of rank at least $2$, and let $M$ be a connected Einstein $C^4$-hypersurface in $\tM$.
  \begin{enumerate}[label=\emph{\arabic*.},ref=\arabic*]
    \item \label{thit:solv}
    If $\tM$ is different from $\SL(3)/\SO(3)$ and $\SU(3)/\SO(3)$, then $\tM$ is of noncompact type and $M$ is \emph{(}an open domain of\emph{)} a codimension one Einstein solvmanifold, as constructed in Example~\ref{ex:solv}.

    \item \label{thit:wu}
    If $\tM$ is isometric to $\SU(3)/\SO(3)$, then $M$ is an open domain of $M_{F^2}=\cup_{Z \in F^2} S_Z$, where $F^2 \subset S^5$ is a $\frac{\pi}{2}$-special Legendrian surface, as constructed in Example~\ref{ex:inWu}. Such a hypersurface $M$ is of conullity $2$, with the nullity leaves $S_Z$, and is locally isometric to $\bc P^2$, with the same Einstein constant as $\tM$.

    \item \label{thit:ncwu}
    If $\tM$ is isometric to $\SL(3)/\SO(3)$, then $M$ is an open domain of $M_{F^2}=\cup_{(p, \rho) \in F^2} S_{(p, \rho)}$, where $F^2 \subset S_-^5$ is a special para-Legendrian surface, as constructed in Example~\ref{ex:inncWu}. Such a hypersurface $M$ is of conullity $2$, with the nullity leaves $S_{(p, \rho)}$, and is locally isometric to $\bc H^2$, with the same Einstein constant as $\tM$.
  \end{enumerate}
\end{theorem}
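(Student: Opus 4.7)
The starting point is the Gauss equation for $M \subset \tM$. Let $\xi$ be a local unit normal, $A$ the shape operator, $H = \Tr A$, and write $R_\xi = \tR(\cdot,\xi)\xi$ for the Jacobi operator. If $c$ and $\tc$ denote the Einstein constants of $M$ and $\tM$ respectively, tracing the Gauss equation yields
\[
R_\xi|_{\xi^\perp} \;=\; (\tc - c)\,I + HA - A^2.
\]
Thus $A$ commutes with $R_\xi$, and the eigenvalues of $R_\xi$ are quadratic functions of those of $A$; since the spectrum of $R_\xi$ is determined by the isotropy orbit of $\xi$, this is already a severe restriction on the admissible normal direction. Differentiating the displayed identity along $M$ and combining $\on \tR = 0$ with the Codazzi equation $(\nabla_X A)Y - (\nabla_Y A)X = \tR(X,Y)\xi$ (tangent part) produces a first-order companion relation controlling how $\xi$ and the eigendistributions of $A$ propagate along $M$.

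The next step is to translate the problem into the Cartan decomposition $\g = \gk + \gp$ at a chosen point, identifying $T_p \tM$ with $\gp$ and writing $R_\xi = \mp \ad(\xi)^2|_\gp$ (sign depending on compact versus noncompact type). The task becomes: classify pairs $(\xi, A)$, with $\xi \in \gp$ a unit vector and $A$ a symmetric endomorphism of $\xi^\perp$, satisfying the displayed algebraic relation together with the Codazzi compatibility. I would dispose of the regular case first: if $\xi$ lies in the interior of a Weyl chamber of a Cartan subspace $\ga$, the spectrum of $R_\xi$ is parametrised by the restricted roots with their multiplicities, and matching this to a quadratic polynomial in the eigenvalues of $A$ forces a very rigid structure; in every irreducible type of rank at least $2$, this can only be realised in the noncompact case with $\xi \in \ga$ orthogonal to $A_H$, recovering exactly Example~\ref{ex:solv}. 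The surviving singular strata (with $\xi$ on a Weyl wall, or in the compact case on a singular locus of the isotropy action) are handled one by one: for each, the Codazzi constraint applied to suitable eigenvectors of $A$, combined with Ricci positivity in the compact case, rules out the stratum except when $\tM$ is the Wu manifold or its dual.

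This case analysis is the main technical core and leaves only $\SU(3)/\SO(3)$ and $\SL(3)/\SO(3)$ as potential exceptions. In both, the algebraic relation forces $\dim \Ker A = 2$ and determines the two nonzero eigenvalues explicitly. Using the Codazzi equation, one verifies that $\Ker A$ is an integrable, totally geodesic nullity distribution whose leaves are precisely the $2$-spheres $S_Z$ (respectively the hyperbolic planes $S_{(p,\rho)}$) of Examples~\ref{ex:inWu}--\ref{ex:inncWu}. The nonzero eigenvalues of $A$ and the induced curvature reproduce those of $\bc P^2$ (respectively $\bc H^2$), accounting for the stated local isometry. Finally, quotienting $M$ by the nullity foliation produces a $2$-dimensional leaf space that embeds naturally in $S^5 \subset \bc^3$ (respectively in $S_-^5 \subset \br^3 \oplus {\br^3}^\ast$); rewriting the $(\xi, A)$-constraint in terms of this surface gives exactly the $\frac{\pi}{2}$-special Legendrian (respectively special para-Legendrian) condition of Examples~\ref{ex:inWu}--\ref{ex:inncWu}.

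The main obstacle is the exhaustive case analysis of the second paragraph: eliminating all admissible $\xi$ for every irreducible symmetric space of rank at least two and every orbit type of the isotropy action on the unit sphere in $\gp$, using the interplay of the algebraic Gauss relation, the Codazzi equation, and $\on \tR = 0$. A secondary obstacle is the geometric identification in the two exceptional cases, where one must recognise the leaf space of the nullity foliation as a $\frac{\pi}{2}$-special Legendrian or special para-Legendrian surface; this requires unwinding the explicit matrix models for $\SU(3)/\SO(3)$ and $\SL(3)/\SO(3)$, and rewriting the differential constraints from the Codazzi equation as the specialness condition on $F^2$.
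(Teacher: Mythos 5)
Your outline reproduces the broad shape of the argument (Gauss equation, curvature-adaptedness of $M$, reduction to a root-system analysis, the two exceptional spaces with their nullity foliations and Legendrian/para-Legendrian leaf spaces), but there are two concrete gaps and one structural error. First, your case division by the regularity of $\xi$ as an element of $\gp$ is not the right dichotomy, and it leads you to a false claim: you assert that the singular strata are ruled out except for $\SU(3)/\SO(3)$ and $\SL(3)/\SO(3)$, but the codimension-one solvmanifolds of Example~\ref{ex:solv} also arise with singular $\xi$ (for $\rk\tM\ge 3$ the subspace $\ga\cap A_H^\perp$ necessarily meets the Weyl walls, and even for $\rk\tM=2$ the vector orthogonal to $A_H$ may lie on a wall). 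The invariant that actually organises the proof is the singularity of the \emph{Gauss map}, i.e.\ the existence of a common kernel of $S$ and $\tR_\xi$ (Lemma~\ref{l:nons}\eqref{it:sing}), which is strictly weaker than singularity of $\xi$; the first half of the proof (Proposition~\ref{p:allsing}) shows the Gauss map is singular \emph{everywhere}, and only then does the trichotomy solvmanifold/Wu/dual emerge (Proposition~\ref{p:singH=0}). Second, you frame the elimination as a pointwise classification of pairs $(\xi,A)$ satisfying the algebraic Gauss relation plus ``Codazzi compatibility''. It is not pointwise: the decisive relations come from differentiating the Gauss identity along $M$ using $\on\tR=0$ (equations \eqref{eq:dG1nR}--\eqref{eq:RR}), and several strata (e.g.\ the type $\mathrm{B}_r$ case with $\xi$ proportional to a short root) admit pointwise-consistent $(\xi,A)$ and are only excluded by a genuinely second-order argument.

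The other missing step is the passage from the constraints on $(\xi,S)$ to the identification of $M$ itself. Even once one knows that at every point $H=0$, the $\al_i$ are constant and $\tM$ is of noncompact type, it does not follow formally that $M$ is an orbit of a codimension-one subgroup: one must construct, from the data $(\xi,S)$ at a single point, an Iwasawa decomposition of $\g$ whose associated codimension-one Einstein solvmanifold has the same unit normal and shape operator there (this is the content of Lemmas~\ref{l:singgen}--\ref{l:Iwasawa}, and is delicate precisely because $\xi$ may be very singular), and then invoke analyticity of minimal hypersurfaces together with a unique-continuation argument (Lemma~\ref{l:analyt}) to conclude that $M$ coincides locally with that orbit. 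Your proposal asserts the conclusion ``recovering exactly Example~\ref{ex:solv}'' without any mechanism for this reconstruction step. The final paragraph on the two exceptional spaces is essentially correct in outline and matches Sections~\ref{s:su3/so3}--\ref{s:sl3so3}.
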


We will show that in the cases~\eqref{thit:wu} and \eqref{thit:ncwu} of Theorem~\ref{th:main}, every complete leaf of $M_{F^2}$ contains singular points (except for in the case when $\tM=\SL(3)/\SO(3)$ and $F^2$ is an affine plane --- see the very end of Example~\ref{ex:inncWu}), which gives the following.

\begin{corollary*} \label{cor:complete}
    Let $\tM$ be an irreducible, simply connected, globally symmetric space of rank at least $2$, and let $M$ be a complete, connected Einstein $C^4$-hypersurface in $\tM$. Then $\tM$ is of noncompact type and $M$ is a codimension one Einstein solvmanifold, as constructed in Example~\ref{ex:solv}.
\end{corollary*}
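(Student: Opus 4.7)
The plan is to apply Theorem~\ref{th:main} and show that cases~\eqref{thit:wu} and \eqref{thit:ncwu} are incompatible with metric completeness of $M$, except for a degenerate sub-case of \eqref{thit:ncwu} that already falls under case~\eqref{thit:solv}. By Theorem~\ref{th:main}, $M$ must belong to one of the three classes. If $M$ is an open domain of a codimension one Einstein solvmanifold as in Example~\ref{ex:solv}, then since the ambient solvmanifold is a simply-connected solvable Lie group with a left-invariant metric, hence metrically complete, the completeness and connectedness of $M$ force it to coincide with the whole solvmanifold, and the conclusion of the corollary holds.

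For class~\eqref{thit:wu} I would argue as follows. The hypersurface $M$ is foliated by the totally geodesic $2$-spheres $S_Z,\; Z \in F^2$, which form the nullity distribution of $M$ in $\tM$. Since each $S_Z$ is totally geodesic in $\tM$, it is in particular totally geodesic within $M$. Hence, if $M$ is complete, every maximal geodesic of a leaf $S_Z$ remains in $M$ and is defined for all time; consequently the whole $2$-sphere $S_Z$ is entirely contained in $M$. Invoking the assertion stated immediately before the corollary --- namely, that every complete leaf of $M_{F^2}$ contains points where $M$ fails to be a regular hypersurface --- we reach a contradiction with $M$ being a $C^4$-hypersurface. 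Thus class~\eqref{thit:wu} cannot arise.

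Class~\eqref{thit:ncwu} is treated analogously: completeness forces each leaf $S_{(p,\rho)}$ to be a complete totally geodesic hyperbolic plane contained in $M$, which again produces singular points in $M$ by the same assertion, unless $F^2$ is an affine plane. In that single exceptional sub-case, the end of Example~\ref{ex:inncWu} identifies $M$ as an open domain of a codimension one Einstein solvmanifold of $\SL(3)/\SO(3)$, so we are reduced to class~\eqref{thit:solv} and conclude as before.

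The principal content of the proof is therefore the preceding assertion on singular points along complete leaves of $M_{F^2}$, which I expect to be the main obstacle. The natural strategy is to analyze when two distinct leaves $S_Z$ and $S_{Z'}$ (respectively, $S_{(p,\rho)}$ and $S_{(p',\rho')}$) meet in $\tM$: at such a meeting point the two infinitesimally transverse directions of $M$ coming from varying the parameter along $F^2$ collapse onto the leaf, and the quotient by the nullity foliation ceases to be a regular surface, producing a singular point. On the compact totally geodesic $2$-sphere of case~\eqref{thit:wu} one should obtain the existence of such meetings by a topological index/degree argument applied to the two-parameter family of leaves swept out by $F^2$; on a complete hyperbolic plane leaf in case~\eqref{thit:ncwu} one must use the special para-Legendrian condition in an essential way to exclude leaf-disjointness everywhere except in the degenerate affine-plane sub-case.
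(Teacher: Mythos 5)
Your top-level assembly coincides with the paper's proof of the Corollary: reduce via Theorem~\ref{th:main} to the three classes, note that completeness forces each nullity leaf of $M$ to be a complete totally geodesic sphere $S_Z$ (resp.\ hyperbolic plane $S_{(p,q)}$) entirely contained in $M$, rule out classes~\eqref{thit:wu} and~\eqref{thit:ncwu} by the assertion that every complete leaf carries singular points, and fold the affine-plane sub-case of~\eqref{thit:ncwu} back into class~\eqref{thit:solv}. This is exactly how the paper concludes, by citing Remark~\ref{rem:Legsing} and Proposition~\ref{p:Tzrk}\eqref{it:nodp0}, \eqref{it:eitherh}, \eqref{it:Tz2sing}, so as a deduction of the Corollary from the body of the paper your argument is sound.

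Where you diverge is in your sketch of how the singular points on a complete leaf should be produced, and there the proposed mechanism is off target. You look for points where two distinct leaves $S_Z$ and $S_{Z'}$ meet, to be forced by a topological degree argument. But a meeting of two distinct leaves would only make $M_{F^2}$ a self-intersecting immersion; an immersed $C^4$-hypersurface may self-intersect without any failure of regularity, so such meetings do not by themselves contradict the hypotheses. The paper's singularities are of a different nature and are found by a purely local computation on a \emph{single} leaf: after normalising by the group action so that $Z=(1,0,0)^t$, $Z_1=(0,1,0)^t$, $Z_2=(0,0,\ri)^t$, differentiating the defining equation $x\overline{Z}=Z$ of \eqref{eq:S_Z} exhibits an explicit circle in $S_Z$, namely the points with $A=\left(\begin{smallmatrix} \cos \theta & \ri \sin \theta \\ \ri \sin \theta & \cos \theta \end{smallmatrix}\right)$, at which the transverse derivative $(\cos\frac{\theta}{2}\,\partial_{u_1}+\sin\frac{\theta}{2}\,\partial_{u_2})x$ becomes tangent to the leaf, i.e.\ the differential of the parametrisation of $M_{F^2}$ degenerates; the hyperbolic case is identical with a geodesic in place of the circle (Proposition~\ref{p:Tzrk}\eqref{it:Tz2sing}). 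No global topological input is needed, and the special (para-)Legendrian condition enters only through the normalisation of the frame. If your write-up is meant to be self-contained rather than to quote the assertion preceding the Corollary, the degree-argument sketch must be replaced by this (or an equivalent) computation.
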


\begin{remark} \label{rem:Ck}
  The requirement that $M$ is of regularity class $C^4$ in Theorem~\ref{th:main} is only necessary for all parts of the proof to work, and may probably be relaxed. The induced metric of an Einstein submanifold must be analytic by the well-known result of \cite[Theorem~5.2]{DTK}, but the submanifold itself does not have to be: for example, a cylinder or a cone over a rectifiable curve in $\br^3$. We observe a similar phenomenon in our settings. While in the first case of Theorem~\ref{th:main}, $M$ is a Lie subgroup (and hence is analytic), and in the second case, $M$ is foliated by totally geodesic leaves parametrised by a special Legendrian surface (which is minimal in the sphere, and hence analytic), in the third case, the affine sphere from which the Einstein hypersurface $M$ is constructed in Example~\ref{ex:inncWu} may belong to a finite regularity class (see Example~\ref{ex:affsph} in Section~\ref{s:sl3so3}).
\end{remark}


\section{Preliminaries}
\label{s:pre}

\subsection{Restricted roots}
\label{ss:roots}

In this section, we give a brief introduction to Riemannian symmetric spaces and introduce some notation (a standard reference is \cite{Hel}). Let $\tM=\rG/\rK$ be an irreducible Riemannian symmetric space, where $\rG$ is the identity component of the full isometry group of $\tM$ (then $\rG$ is semisimple) and $\rK$ is the isotropy subgroup. We assume that $\tM$ is of rank at least $2$. Denote $\ve = \pm 1$ the sign of the Einstein constant of $\tM$. Let $B$ be the Killing form of $\g$, and $\g=\gk \oplus \gp$ be the Cartan decomposition, where $\g$ and $\gk$ are the Lie algebras of $\rG$ and $\rK$ respectively, and $\gp$ is the orthogonal complement to $\gk$ relative to $B$. We have $[\gk,\gk], [\gp,\gp] \subset \gk$ and $[\gk,\gp] \subset \gp$. The tangent space $T_x\tM$ at a point $x \in \tM$ can be identified with $\gp$. The inner product $\ip$ on $\gp$ is defined by $\<X,Y\>=-\ve B(X,Y)$ (we will occasionally scale it by a positive constant for convenience). The \emph{curvature tensor} of $\tM$ is given by $\tR(X,Y)Z=-[[X,Y],Z]$ for $X,Y,Z \in \gp$. A subspace $\gp' \subset \gp$ is called a \emph{Lie triple system}, if $[[\gp',\gp'],\gp'] \subset \gp'$. The exponent of a Lie triple system is a totally geodesic submanifold in $\tM$, and conversely, the tangent space to any totally geodesic submanifold of $\tM$ is a Lie triple system.

Let $\ga \subset \gp$ be a \emph{Cartan subalgebra}, a maximal abelian subalgebra of $\gp$. An element $X \in \gp$ is called \emph{regular} if it is contained in a unique Cartan subalgebra. The set of regular elements is open and dense in $\gp$ and in any $\ga$. For $\beta \in \ga$ we denote $\g_\beta=\{X \in \g \, | \, [A,[A,X]]=-\ve\<\beta,A\>^2X, \, \text{for all } A \in \ga\}$. A \emph{restricted root} is an element $\beta \in \ga \setminus \{0\}$ such that $\g_\beta \ne \{0\}$. Note that $\g_\beta=\g_{-\beta}$. Denote $\Delta \subset \ga$ the set of restricted roots. As $\tM$ is irreducible, so is $\Delta$ (it does not lie in the union of two proper orthogonal subspaces of $\ga$). For $\beta \in \Delta$, denote $\gp_\beta=\gp_{-\beta} = \g_\beta \cap \gp, \; \gk_\beta=\gk_{-\beta}=\g_\beta \cap \gk$. We call $\gp_\beta$ the \emph{root spaces}. Choosing a hyperplane in $\ga$ orthogonal to a regular vector we denote $\Delta^+$ the set of positive roots, those elements of $\Delta$ which lie on one side of that hyperplane. Then we have the root decompositions
\begin{equation*}
    \gp=\ga \oplus \bigoplus\nolimits_{\beta \in \Delta^+}\gp_\beta, \qquad \gk=\gk_0 \oplus \bigoplus\nolimits_{\beta \in \Delta^+}\gk_\beta,
\end{equation*}
(note that the first one is orthogonal relative to $\ip$). For any $\beta \in \Delta$, there exists a linear bijection $\theta_\beta:\gp_\beta \to \gk_\beta, \; \theta_{-\beta}=-\theta_\beta$, such that $[A, X_\beta]=\<\beta,A\>\theta_\beta X_\beta$, $[A, \theta_\beta X_\beta]=-\ve \<\beta,A\> X_\beta$ (and so $\<[X_\beta, \theta_\beta X_\beta],A\>= \ve \<\beta, A\> \|X_\beta\|^2$), for all $A \in \ga, \; X_\beta \in \gp_\beta$. In particular, $\dim \gp_\beta=\dim \gk_\beta$, the \emph{multiplicity} of the root $\beta \in \Delta$. For any $\beta, \gamma \in \Delta \cup \{0\}$, we have $[\gp_\beta,\gp_\gamma], [\gk_\beta,\gk_\gamma] \subset \gk_{\beta+\gamma} \oplus \gk_{\beta-\gamma}$ and $[\gk_\beta,\gp_\gamma] \subset \gp_{\beta+\gamma} \oplus \gp_{\beta-\gamma}$, where we denote $\gp_0=\ga$.

The following two facts are well known.
\begin{lemma}\label{l:rr}
{ \ }
\begin{enumerate}[label=\emph{(\alph*)},ref=\alph*]
  \item \label{it:rr1}
  If the roots $\beta, \gamma \in \Delta$ are not proportional and not orthogonal, then $\tR(A,X_\beta) X_\gamma \ne 0$, for all nonzero $X_\beta \in \gp_\beta, \; X_\gamma \in \gp_\gamma$, and all $A \in \ga$ such that $\<\beta,A\> \ne 0$.

  \item \label{it:rrnot2hyper}
  If $\tR(X,\gp,\gp,Y)=0$ for $X, Y \in \gp$, then $X=0$ or $Y=0$.
\end{enumerate}
\end{lemma}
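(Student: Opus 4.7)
The approach is to unpack both claims using the explicit curvature formula $\tR(X,Y)Z=-[[X,Y],Z]$ together with the restricted root decomposition $\gp=\ga\oplus\bigoplus_{\beta\in\Delta^+}\gp_\beta$.

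For part~(a), direct computation gives
\begin{equation*}
    \tR(A,X_\beta)X_\gamma \;=\; -[[A,X_\beta],X_\gamma] \;=\; -\langle\beta,A\rangle\,[\theta_\beta X_\beta,X_\gamma],
\end{equation*}
so under the hypothesis $\langle\beta,A\rangle\ne 0$ the claim reduces to showing $[\theta_\beta X_\beta,X_\gamma]\ne 0$, an element of $\gp_{\beta+\gamma}\oplus\gp_{\beta-\gamma}$. The vectors $X_\beta$, $\theta_\beta X_\beta$ and $H_\beta:=[X_\beta,\theta_\beta X_\beta]\in\ga$ span (after complexification when $\ve=1$) an $\mathfrak{sl}(2,\br)$-subalgebra, under which $\bigoplus_{k\in\mathbb{Z}}\gp_{\gamma+k\beta}$ is a module. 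Non-proportionality of $\beta,\gamma$ places $X_\gamma$ at a nonzero $H_\beta$-weight, and non-orthogonality, via the standard root-string theory, forces the $\beta$-string through $\gamma$ to have length at least two. Writing $\theta_\beta X_\beta$ as a linear combination of the raising and lowering operators then yields $[\theta_\beta X_\beta,X_\gamma]\ne 0$.

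For part~(b), I argue by contradiction: suppose $X,Y\in\gp$ are both nonzero with $\langle[[X,U],V],Y\rangle=0$ for all $U,V\in\gp$. Using the $\Ad(\rK)$-action, place $X$ in $\ga$ and decompose $Y=Y_0+\sum_\beta Y_\beta$ with $Y_0\in\ga$, $Y_\beta\in\gp_\beta$. Testing with $U=X_\beta$, $V\in\ga$ gives $\tR(X,X_\beta)V=-\ve\langle\beta,X\rangle\langle\beta,V\rangle X_\beta$, forcing $Y_\beta=0$ whenever $\langle\beta,X\rangle\ne 0$; and testing with $U=V=X_\beta$ extracts the $\ga$-component $\ve\langle\beta,X\rangle\|X_\beta\|^2\hat\beta$ of $\tR(X,X_\beta)X_\beta$ (where $\hat\beta\in\ga$ is the metric dual of $\beta$), whose potential $\gp_{2\beta}$-partner is killed by the previous step (since $\langle 2\beta,X\rangle\ne 0$). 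This forces $\langle\beta,Y_0\rangle=0$ for every $\beta$ with $\langle\beta,X\rangle\ne 0$. When $X$ is regular, every root satisfies $\langle\beta,X\rangle\ne 0$, and since $\Span\Delta=\ga$ by irreducibility, we obtain $Y_0=0$ and thus $Y=0$, the desired contradiction.

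The main obstacle is the singular case. If $X\in\ga$ is not regular, the above only shows $Y\in\gp_X=\ga\oplus\bigoplus_{\beta\in\Delta_0}\gp_\beta$ with $\Delta_0=\{\beta\in\Delta:\langle\beta,X\rangle=0\}$; in particular $[X,Y]=0$, so a further conjugation by $\rK_X\subset\rK$ puts $Y$ into $\ga$ while fixing $X$. The hypothesis being symmetric in $X,Y$ (by the curvature symmetry $\tR(X,U,V,Y)=\tR(V,Y,X,U)$), the symmetric version of the above analysis yields $\langle\beta,X\rangle\langle\beta,Y\rangle=0$ for every $\beta\in\Delta$. A closing root-theoretic argument, combining part~(a) with the irreducibility of $\Delta$ (which precludes $\Delta$ from being contained in $X^\perp\cup Y^\perp$ for two nonzero $X,Y\in\ga$), produces the final contradiction.
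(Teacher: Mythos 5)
Your proposal is correct in substance but follows a genuinely different route from the paper in both parts. For (\ref{it:rr1}), after the same reduction to $[\theta_\beta X_\beta,X_\gamma]\ne 0$, the paper avoids $\mathfrak{sl}_2$-theory entirely: assuming the bracket vanishes, it applies $\ad_A$ with $\<\gamma,A\>=0\ne\<\beta,A\>$ to deduce $[X_\beta,X_\gamma]=0$, and then expands $[X_\gamma,[X_\beta,\theta_\beta X_\beta]]$ to isolate a term $-\ve\|X_\beta\|^2\<\gamma,\beta\>\theta_\gamma X_\gamma$ that cannot be cancelled by the $\gk_{2\beta\pm\gamma}$-components. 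Your root-string argument is the standard alternative and does work, but note that the step from ``$X_\gamma$ is not annihilated by both raising and lowering operators'' to ``$[\theta_\beta X_\beta,X_\gamma]\ne 0$'' is not automatic, since $\theta_\beta X_\beta$ is only one particular combination of $E$ and $F$: you must either first show $[X_\beta,X_\gamma]=0$ (as the paper does), or observe that $[E,X_\gamma]$ and $[F,X_\gamma]$ lie in distinct weight spaces $\g^{\pm\beta\pm\gamma}$ so that their difference vanishes only if both do; and in the compact case the whole triple must be set up after complexification. For (\ref{it:rrnot2hyper}), the paper's argument is shorter and cleaner: if $[X,Y]\ne 0$ then $\tR(X,Y,X,Y)=\ve B([X,Y],[X,Y])\ne 0$ by negative definiteness of $B$ on $\gk$, and if $[X,Y]=0$ one places both in a common Cartan subalgebra and picks $\beta$ with $\<\beta,X\>\<\beta,Y\>\ne 0$. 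Your decomposition of $Y$ against a Cartan subalgebra containing $X$ reaches the same endpoint (it effectively proves $[X,Y]=0$ is forced and then reduces to the commuting case), at the cost of more computation and the $\rK_X$-conjugation step; both you and the paper invoke, with comparable brevity, the fact that an irreducible root system cannot lie in the union of two hyperplanes. I see no error, only the sketched details noted above that need to be filled in.
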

\begin{proof}
Assertion~\eqref{it:rr1} is equivalent to saying that $[X_\gamma,\theta_\beta X_\beta] \ne 0$. Suppose this is false. Take $A \in \ga$ such that $\<\gamma,A\> = 0, \; \<\beta,A\> \ne 0$. Then $0=[A,[X_\gamma, \theta_\beta X_\beta]]=-\ve \<\beta,A\>[X_\gamma, X_\beta]$, and so $[X_\gamma, X_\beta]=0$.
We get $0=[X_\gamma,[X_\beta,\theta_\beta X_\beta]]$. As $[X_\beta,\theta_\beta X_\beta] \in \ve \|X_\beta\|^2 \beta + \gp_{2\beta}$, we obtain
$0=[X_\gamma,[X_\beta,\theta_\beta X_\beta]]= -\ve \|X_\beta\|^2 \<\gamma, \beta\>\theta_\gamma X_\gamma$ plus possibly some element from $\gk_{2\beta-\gamma}\oplus\gk_{2\beta+\gamma}$, a contradiction.

For assertion~\eqref{it:rrnot2hyper}, if $[X,Y] \ne 0$, then $\tR(X,Y,X,Y)=\ve B([[X,Y],X],Y)=\ve B([X,Y],[X,Y])$ $\ne 0$, as $B$ is negative definite on $\gk$. If $[X,Y]=0$, take a Cartan subalgebra $\ga \subset \gp$ containing both $X$ and $Y$. As $\Delta$ cannot lie in the union of two hyperplanes, we can choose $\beta \in \Delta$ such that $\<\beta, X\>\<\beta, Y\> \ne 0$. Then for a nonzero $X_\beta \in \gp_\beta$, we have $\tR(X,X_\beta,X_\beta,Y)=\ve \<\beta,X\>\<\beta,Y\> \|X_\beta\|^2 \ne 0$. 
\end{proof}

For detailed, explicit lists of restricted root systems the reader is referred to \cite{Ber} and \cite{T1}.

\subsection{Iwasawa decomposition}
\label{ss:iwa}
In this section, we explain the construction given in Example~\ref{ex:solv} (Theorem~\ref{th:main}\eqref{thit:solv}).

In the noncompact case, the group $\rG$ admits the Iwasawa decomposition $\rG=\rK\rA\rN$, where $\rA$ is abelian (with the Lie algebra $\ga$), $\rN$ is nilpotent and $\rA\rN$ solvable \cite[\S VI.5]{Hel}. All three subgroups $\rA, \rN$, and $\rA\rN$ are connected and simply connected, and the Cartesian product $\rK \times \rA \times \rN$ is diffeomorphic to $\rG$. Moreover, the solvable subgroup $\rA\rN \subset \rG$ acts \emph{simply transitively} on $\tM$, that is, $\tM$ can be identified with the solvmanifold $\rA\rN$ with a left-invariant Einstein metric. At the Lie algebra level, the Iwasawa decomposition is given by $\g = \gk \oplus \ga \oplus \gn$ (the direct sum of vector spaces), where the solvable subalgebra $\gs=\ga \oplus \gn$ (with the Einstein inner product inherited from $\gp$) is the Lie algebra of the group $\rA\rN \, (=\tM)$. Note that $\dim \ga = \rk \tM$ (and so is at least $2$ in the assumptions of Theorem~\ref{th:main}). The Iwasawa decomposition depends on the choice of a Cartan subalgebra $\ga$ and of the subset $\Delta^+$ of positive roots; however, all these decompositions are conjugate to one another. With these choices made, the nilpotent subalgebra $\gn$ is spanned by the vectors $X_\beta + \theta_\beta X_\beta$, where $X_\beta \in \gp_\beta, \, \beta \in \Delta^+$. We also note that the Iwasawa decomposition can be similarly constructed when $\tM$ is the Riemannian product of a Euclidean space and a symmetric space of noncompact type (such $\tM$ may be reducible and non-Einstein, with a reductive group $\rG$); we will use this fact in Section~\ref{s:solv} (in the construction before Lemma~\ref{l:Iwasawa}).

The Riemannian Einstein solvmanifold $\rA\rN$ (which we identify with $\tM$) is of \emph{Iwasawa type} \cite[Definition~1.2]{Wol}. This means that $\gs$ is the orthogonal semidirect product of the abelian subalgebra $\ga$ and the nilradical $\gn=[\gs,\gs]$, all the operators $\ad_A, \; A \in \ga$, are symmetric, and for some $A_0 \in \ga$, all the eigenvalues of $(\ad_{A_0})_{|\gn}$ are positive.

As a side remark, note that by a recent grounbreaking result of \cite{BL}, \emph{any homogeneous Einstein space with negative Einstein constant} is a solvmanifold of Iwasawa type.

Given a metric, Einstein, solvable Lie algebra $\gs$ of Iwasawa type (in particular, an algebra coming from the Iwasawa decomposition of a semisimple algebra, as above), there is a distinguished vector $A_H \in \ga$ (which can serve as $A_0$), the \emph{mean curvature vector}. Geometrically, $A_H$ is the mean curvature vector of the subgroup $\rN \subset \rA\rN$; algebraically, $A_H$ is defined by the condition $\<A_H,A\>=\Tr \ad_A$, for all $A \in \ga$. Assuming $\dim \ga \ge 2$ (which is so in our case) one can choose an arbitrary nonzero vector $\xi \in \ga$ such that $\xi \perp A_H$, and consider the codimension one subalgebra $\xi^\perp \subset \gs$. This subalgebra is still solvable and is of Iwasawa type; moreover, it is still Einstein (relative to the induced inner product), with the same mean curvature vector $A_H$ and the same Einstein constant \cite[Lemma~1.4]{Wol}. The corresponding connected Lie subgroup of $\rA\rN \, (=\tM)$ is then an Einstein hypersurface in $\tM$, with the same Einstein constant. Note that if $\rk \tM =2$, all such hypersurfaces are congruent, but if $\rk \tM > 2$, they do not have to be even isometric \cite[Theorem~1]{Ale}. For a more general construction of Einstein solvmanifolds of higher codimension from the Iwasawa decomposition, the reader is referred to \cite{T2} and to the recent survey paper \cite{DDS}.

\subsection{Gauss map and Gauss image}
\label{ss:Gm}

There are several definitions of the Gauss map of a hypersurface in a symmetric space available in the literature (e.g. the one in \cite[Section~2.1]{RR} is different from the one below; our definition is similar to \cite[Section~2]{Nik}).

Informally, given a hypersurface $M$ in a symmetric space $\tM$ of dimension $n+1$ and a point $x \in M$, we define the \emph{Gauss image of $M$} as a subset of the unit sphere of $T_x\tM$ which consists of all the vectors $d\mL_g^{-1} \xi(y)$ where $g \in \rG$ and $y \in M$ satisfy $g(x) =y$, and $\mL$ is the left action of $\rG$. Note that the Gauss image so defined is $\Ad_\rK$-invariant. More formally, for $x \in M$ we identify $\gp$ with $T_x\tM$, where $\g= \gk \oplus \gp$ is the Cartan decomposition, and choose a continuous unit normal vector field $\xi$ on a neighbourhood of $x$ in $M$. Consider Killing vector fields on a neighbourhood $\overline{\mU}$ of $x \in \tM$ defined by elements of a neighbourhood of the origin in $\gp$. Then for every $y \in \overline{\mU}$, there is a unique isometry $\phi_y$ defined by one of these Killing vector fields which sends $y$ to $x$. We define the \emph{Gauss map} $\rGA$ from $(\overline{\mU} \cap M) \times \rK$ to the unit sphere of $T_x\tM$ by $\rGA(y,h)=\Ad_h (d(\phi_y)_y \xi(y))$ (where $h \in \rK$).

\begin{lemma} \label{l:dgamma}
  Let $X \in T_xM, \, U \in \gk=T_e\rK$. Then $(d\rGA)_{x,e} (X,U)= \on_X\xi + [U,\xi]$, where $\on$ is the Levi-Civita connection on $\tM$.
\end{lemma}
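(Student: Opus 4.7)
The plan is to compute $(d\rGA)_{(x, e)}$ separately on the two factors $(X, 0)$ and $(0, U)$ and add the contributions by linearity of the differential.

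\textbf{Variation in $h$ with $y = x$ fixed.} Since $\phi_x = \id$, we have $\rGA(x, h) = \Ad_h(\xi(x))$. Differentiating at $h = e$ along $U \in \gk$ yields $\ad_U \xi = [U, \xi]$, which is the second term in the desired formula.

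\textbf{Variation in $y$ with $h = e$ fixed.} Let $c(t)$ be a smooth curve in $M$ with $c(0) = x$ and $c'(0) = X$. Parametrize $\phi_{c(t)} = \exp(W(t))$ with $W(t) \in \gp$ and $W(0) = 0$; by the defining relation, $\exp(W(t)) \cdot c(t) = x$. Differentiating this relation at $t = 0$ and using that $V \mapsto V^*(x) = V$ identifies $\gp$ with $T_x\tM$, we obtain $W'(0) = -X$. Because $\phi_{c(t)}(c(t)) = x$, the values $\rGA(c(t), e) = d(\phi_{c(t)})_{c(t)} \xi(c(t)) = ((\phi_{c(t)})_* \xi)(x)$ all lie in the fixed tangent space $T_x\tM$, so we may differentiate in $t$ as an ordinary vector-valued function. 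To first order, $\phi_{c(t)} = \exp(-tX + O(t^2))$ agrees with the time-$t$ flow of the Killing field $(-X)^*$, and the standard Lie-derivative identity $\frac{d}{ds}|_{s=0}((\exp(sY))_*\xi)(x) = -[Y^*, \xi]|_x$, applied with $Y = -X$, yields
\[
\frac{d}{dt}\Big|_{t=0} \rGA(c(t), e) = [X^*, \xi]\big|_x.
\]

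\textbf{From $[X^*, \xi]|_x$ to $\on_X \xi$.} The transvection Killing field $X^*$ generated by $X \in \gp$ satisfies $X^*(x) = X$ and $(\on X^*)|_x = 0$: indeed, the geodesic symmetry $\theta_x$ at $x$ is an isometry with $d(\theta_x)_x = -\id$ satisfying $(\theta_x)_* X^* = -X^*$, so by $\theta_x$-invariance of $\on$ the operator $(\on X^*)|_x$ equals its own negative. The Killing identity $[X^*, \xi]|_x = \on_{X^*(x)} \xi - \on_{\xi(x)} X^*$ therefore reduces to $[X^*, \xi]|_x = \on_X \xi$. Adding the two contributions gives the claim. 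The main technical step is the Lie-derivative computation for the $t$-dependent isometry family $\phi_{c(t)}$, which requires verifying that only $W'(0) = -X$ enters the first-order limit (higher-order terms in $W(t)$ contribute at $O(t^2)$); the vanishing $(\on X^*)|_x = 0$ is the essential symmetric-space input.
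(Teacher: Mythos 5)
Your proof is correct, and it reaches the formula by a somewhat different route than the paper for the $T_xM$-component. The paper builds an orthonormal frame $Y_1,\dots,Y_{n+1}$ invariant under all the isometries $\phi_y$, expands $\xi=\sum_i a_iY_i$, and reduces everything to the fact that $(\on_X Y_i)(x)=0$; you instead differentiate the family $\phi_{c(t)}=\exp(W(t))$ directly, identify the first-order part of the family with the flow of the transvection field $(-X)^*$, and convert $[X^*,\xi]\vert_x$ to $\on_X\xi$ via the Killing-field identity. Both arguments ultimately rest on the same symmetric-space input, namely $(\on X^*)\vert_x=0$ for $X\in\gp$ (the paper cites this for the frame, you derive it from the geodesic symmetry), so the difference is one of packaging: your version avoids constructing the auxiliary frame at the cost of having to justify that only $W'(0)=-X$ survives the first-order limit, which you correctly flag and which does hold since the curve $t\mapsto d(\phi_{c(t)})_{c(t)}\xi(c(t))$ takes values in the fixed space $T_x\tM$ and its derivative depends only on the first jet of $t\mapsto\phi_{c(t)}$ at $t=0$. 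One small point of rigour: $\xi$ is defined only on $M$, so the bracket $[X^*,\xi]$ and the expression $(\exp(sY))_*\xi$ evaluated at $x$ strictly require extending $\xi$ to a neighbourhood of $x$ in $\tM$; this is harmless because the quantity you actually differentiate, $d(\phi_{c(t)})_{c(t)}\xi(c(t))$, is intrinsically defined, and the resulting formula $\on_X\xi-\on_{\xi(x)}X^*$ involves only $X$ tangent to $M$ and the value $\xi(x)$, hence is independent of the extension. Your identification $W'(0)=-X$ from differentiating $\exp(W(t))\cdot c(t)=x$ is also correct.
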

\begin{proof}
  Let $Y_1, \dots, Y_{n+1}$ be orthonormal vector fields on $\overline{\mU}$ which are left-invariant relative to all the isometries defined by the Killing vector fields $Z$ with $Z(x) \in \gp$. Note that for any such Killing vector field $Z$ and for any $i=1, \dots, n+1$, we have $[Z, Y_i]=0$ (the Lie bracket of vector fields). As at $x$ we have $\on_{Y_i}Z=0$ (see e.g. \cite[proposition~7.28]{Bes}; note that $[\gp,\gp] \subset \gk$), it follows that $(\on_Z Y_i)(x)=0$. We will further assume that $\xi(x)=Y_{n+1}(x)$. Let $\xi = \sum_{i=1}^{n+1} a_i Y_i$. Then for $X \in T_xM$ we have $(d\rGA)_{x,e} (X,0)= \sum_{i=1}^{n+1} X(a_i)_x Y_i$, as the vector fields $Y_i$ are left-invariant relative to isometries defined by the Killing vector fields from $\gp$. It follows that $(d\rGA)_{x,e} (X,0)= \on_X\xi - \sum_{i=1}^{n+1} a_i \on_X Y_i = \on_X \xi$, from the above. On the other hand, for $h=\exp(tU) \in \rK$, we have $\rGA(x,h)= \Ad_h \xi(x)$, and so $(d\rGA)_{x,e} (0,U)= [U,\xi]$.
\end{proof}

We say that the Gauss map $\rGA$ (the Gauss image) at a point $x \in M$ is \emph{nonsingular}, if $\rk(d\rGA)_{x,e} = n$ (that is, the rank is the maximal possible), and is \emph{singular} otherwise. Clearly, these definitions do not depend on the choice of the neighbourhood $\overline{\mU}$. Moreover, if the Gauss map at a point $x \in M$ is nonsingular, then the Gauss image of a neighbourhood of $x$ in $M$ has a nonempty interior in the unit sphere of $T_x\tM$; in particular, the vector $\xi$ is regular (at nearby points). Further properties, under the Einstein assumption, are given in Lemma~\ref{l:nons} in the next section.

\section{Proof of Theorem~\ref{th:main}, generalities}
\label{s:pfgen}

The rest of the paper is the proof of Theorem~\ref{th:main}. It goes as follows. In this section, we establish several technical facts which will be useful in subsequent sections. Then in Proposition~\ref{p:allsing} (Section~\ref{s:reg}) we show that at all points of an Einstein hypersurface $M$ of an irreducible Riemannian symmetric space $\tM$ of rank at least $2$, the Gauss map (as defined in Section~\ref{ss:Gm}) is singular. Next, in Proposition~\ref{p:singH=0} (Section~\ref{s:singg}) we show that, unless $\tM$ is one of the spaces $\SU(3)/\SO(3)$ or $\SL(3)/\SO(3)$, the space $\tM$ is of noncompact type, and then the hypersurface $M$ is minimal, and all the eigenvalues of its shape operator and its normal Jacobi operator are constant. We continue with the latter case in Section~\ref{s:solv} proving in Proposition~\ref{p:solv} that such a hypersurface $M$ is an (open domain of an) Einstein solvmanifold of codimension $1$ constructed from an Iwasawa decomposition of $\tM$, as in Theorem~\ref{th:main}\eqref{thit:solv} (Example~\ref{ex:solv}). We then proceed to classification of Einstein hypersurfaces in the spaces $\SU(3)/\SO(3)$ and $\SL(3)/\SO(3)$. In Proposition~\ref{p:su3so3gen} (Section~\ref{s:su3/so3}) we show that any such hypersurface $M$ carries a $2$-dimensional totally geodesic nullity foliation whose leaves have constant curvature, and that $M$ is locally isometric to $\bc P^2$ (respectively, to $\bc H^2$), with the same Einstein constant as $\tM$. Then we treat these two cases separately. In Section~\ref{s:su3so3}, we briefly introduce necessary facts of Legendrian geometry, and then prove Proposition~\ref{p:su3so3Leg} which says that the set of leaves of an Einstein hypersurface in $\SU(3)/\SO(3)$ is a $\frac{\pi}{2}$-special Legendrian surface $F^2$ in the ``Grassmannian" $S^5=\SU(3)/\SU(2)$ of totally geodesic spheres of $\SU(3)/\SO(3)$ (and the converse is also true), as in Theorem~\ref{th:main}\eqref{thit:wu} (Example~\ref{ex:inWu}). Finally, in Section~\ref{s:sl3so3}, after briefly introducing necessary notions of para-Legendrian and equi-affine geometries, we show in Proposition~\ref{p:sl3so3Tz} that the set of leaves of an Einstein hypersurface in $\SL(3)/\SO(3)$ is a special para-Legendrian surface $F^2$ in the ``Grassmannian" $S_-^5=\SL(3)/\SL(2)$ of totally geodesic hyperbolic planes of $\SL(3)/\SO(3)$ (and again, the converse is also true), as in Theorem~\ref{th:main}\eqref{thit:ncwu}, and then show that at generic points, such a surface $F^2$ is uniquely determined by a proper indefinite affine sphere in the equi-affine space $\br^3$ (as in Example~\ref{ex:inWu}).

The proof of the Corollary is given in the end of Section~\ref{s:sl3so3}.

Let $\tM$ be an irreducible, simply connected, globally symmetric Riemannian space of dimension $n+1$ and of rank at least $2$, and let $M$ be a connected Einstein $C^4$-hypersurface in $\tM$. Denote $\ip$ the metric tensor on $\tM$ and the induced metric tensor on $M$. Let $\on, \tR$ and $\nabla, R$ denote the Levi-Civita connections and the curvature tensors of $\tM$ and of $M$, respectively; for $x \in \tM$ and $X \in T_x\tM$, the Jacobi operator $\tR_X$ is defined by $\tR_XY=\tR(Y,X)X$ for $Y \in T_x\tM$. Let $\xi$ be a unit normal vector field of $M$. For $x \in M$ we define the shape operator $S$ on $T_xM$ by $SX=-\on_X \xi$, so that $\<\on_XY,\xi\>=\<SX,Y\>$, where $X, Y \in T_xM$.

We call a point $x \in M$ \emph{regular} if the cardinalities of the sets of eigenvalues of both $S$ and $\tR_\xi$ are constant on a neighbourhood of $x$. As both these cardinalities are lower semi-continuous functions on $M$, the set $M'$ of regular points is open and dense in $M$ (but may be disconnected). Locally on $M'$, the eigenvalues of both $S$ and $\tR_\xi$ have constant multiplicities and are $C^2$ functions, and one can choose a $C^2$ orthonormal frame of eigenvectors of $S$.

On a neighbourhood of a regular point $x \in M' \subset M$, let $X_i$ be a ($C^2$) orthonormal frame of eigenvectors of $S$, with the corresponding eigenvalues (principal curvatures) $\la_i, \; i=1, \dots, n$. Denote $H = \Tr S = \sum_{i=1}^n \la_i$ the mean curvature of $M$.

By Gauss equation, $\tR(X_i,X_k,X_k,X_j) = R(X_i,X_k,X_k,X_j) + (\la_k^2 \delta_{ik}\delta_{jk}-\la_i \la_k\delta_{ij})$. Summing up by $k$ we obtain
  \begin{gather}\label{eq:Gauss}
    \<\tR_\xi X_i, X_j\>=\alpha_i \delta_{ij}, \quad \text{where } \\
    \alpha_i=-\la_i^2 + H \la_i + C, \label{eq:Gaussal}
  \end{gather}
and where $C$ is the difference of the Einstein constants of $\tM$ and of $M$, and $\alpha_i, \; i=1, \dots, n$, are the eigenvalues of the restriction of $\tR_\xi$ to $T_xM$. Equivalently, \eqref{eq:Gauss} and \eqref{eq:Gaussal}  can be written as
  \begin{equation}\label{eq:Gaussinv}
    (\tR_\xi)_{|T_xM} = -S^2 + HS + C \id_{T_xM}.
  \end{equation}
For an eigenvalue $\al$ of the restriction of $\tR_\xi$ to $T_xM$, denote $L_{\al}$ the corresponding eigenspace. Then for every $\al$, the eigenspace $L_{\al}$ is $S$-invariant (hypersurfaces with this property are called \emph{curvature-adapted}) and
  \begin{equation}\label{eq:Gaussinval}
    S_{|L_{\al}}^2 - HS_{|L_{\al}} + (\al-C) \id_{L_{\al}} = 0.
  \end{equation}
Let $V_{\la_i}$ be the $\la_i$-eigenspace of $S$. Then by \eqref{eq:Gaussinval}, every $L_{\al}$ is the direct orthogonal sum of no more than two subspaces $V_{\la_i}, V_{\la_j}$ with $\la_i \ne \la_j$ (but with $\al_i=\al_j=\al$). We also note that in the assumptions of Theorem~\ref{th:main}, zero is always an eigenvalue of the restriction of $\tR_\xi$ to $T_xM$, of multiplicity at least $\rk \tM -1$; moreover, on every connected component of $M'$, any $\al_i$ is either identically zero or is nowhere zero.

Codazzi equation takes the form 
  \begin{equation}\label{eq:codazzi}
    \tR(X_k,X_i,X_j,\xi)  = \delta_{ij} X_k(\la_j) - \delta_{kj} X_i(\la_j) + (\la_i-\la_j) \G_{ki}^{\hphantom{k}j} - (\la_k-\la_j) \G_{ik}^{\hphantom{i}j},
  \end{equation}
where $\G_{ki}^{\hphantom{k}j}=\<\nabla_k X_i, X_j\>$ (note that $\G_{kj}^{\hphantom{k}i}=-\G_{ki}^{\hphantom{k}j}$).

As $\tM$ is a symmetric space, we have $\on \, \tR = 0$, and so differentiating equation~\eqref{eq:Gauss} in the direction of $X_k$ we obtain
\begin{gather}\label{eq:dG1nR}
  \la_k \<\tR_{X_i}\xi, X_k\> = -\tfrac12 X_k(\al_i),  \\
  \G_{ki}^{\hphantom{k}j} (\alpha_j - \alpha_i) = \la_k (\tR(X_k,X_i,X_j,\xi) + \tR(X_k,X_j,X_i,\xi)),  \label{eq:dG2nR}
\end{gather}
where $i \ne j$. Furthermore, we have $\tR(X,Y) \cdot \tR = 0$, for any vector fields $X,Y$ on $\tM$, where $\tR(X,Y)$ acts as a derivation of the tensor algebra on $\tM$. In particular, acting by $\tR(X_k,\xi)$ on \eqref{eq:Gauss} we get
\begin{equation}\label{eq:RR}
  (\al_j-\al_i) \tR(X_i,X_j,X_k,\xi) + \al_k \tR(X_k,X_i,X_j,\xi) - \al_k \tR(X_j,X_k,X_i,\xi) = 0,
\end{equation}
for all $i,j,k =1, \dots , n$. In particular, if $i=j$ we get $\tR_{X_i} \xi \in \Ker \tR_\xi$.

As the symmetric space $\tM$ is irreducible, we have $\al_i = \ve \eta_i^2$ for all $i =1, \dots , n$, with some $\eta_i \in \br$ defined up to a sign, where $\ve = \pm 1$ is the sign of the Einstein constant of $\tM$.

We start with the following two Lemmas (if $\xi \in \gp$ is regular, the statement of Lemma~\ref{l:Rijk}\eqref{it:Rijknon0eta} essentially follows from the root decomposition -- see Section~\ref{ss:roots}).

\begin{lemma} \label{l:Rijk}
  In the assumptions of Theorem~\ref{th:main} \emph{(}and in the above notation\emph{)}, at any point $x \in M'$ we have the following.
    \begin{enumerate}[label=\emph{(\alph*)},ref=\alph*]
      \item \label{it:ali0}
      If $\al_i=0$, then $\tR(X_i, \xi)=0$.

      \item \label{it:Rijknon0eta}
      Suppose the triple $(\tR(X_i,X_j,X_k,\xi), \tR(X_k,X_i,X_j,\xi)$, $\tR(X_j,X_k,X_i,\xi))$ is nonzero for some $i,j,k =1, \dots , n$. Then no more than one of $\al_i, \al_j, \al_k$ is zero, and for a particular choice of signs of $\eta_i, \eta_j, \eta_k$ we have $\eta_i+\eta_j+\eta_k=0$. With this choice of signs, the triple $(\tR(X_i,X_j,X_k,\xi)$, $\tR(X_k,X_i,X_j,\xi), \tR(X_j,X_k,X_i,\xi))$ is a nonzero multiple of the triple $(\eta_k, \eta_j, \eta_i)$, and
      \begin{equation}\label{eq:etala}
      \ve \eta_i \eta_j \eta_k = \la_i \la_j \eta_k + \la_k \la_i \eta_j + \la_j \la_k \eta_i.
      \end{equation}
  \end{enumerate}
\end{lemma}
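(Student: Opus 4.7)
The plan is to handle (a) via the Lie-algebraic formula for curvature in a symmetric space, and (b) via a linear-algebra argument on the Bianchi–plus–\eqref{eq:RR} system, with \eqref{eq:etala} following from Codazzi.

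For (a), the identity $\tR(X,Y)Z = -[[X,Y],Z]$ for $X,Y,Z \in \gp$ gives $\tR(X,Y) = -\ad_{[X,Y]}$ with $[X,Y]\in\gk$. Using $\<\cdot,\cdot\> = -\ve B$ on $\gp$ and the cyclic invariance of the Killing form, one obtains
\[
  \alpha_i \;=\; \<\tR_\xi X_i,X_i\> \;=\; -\ve\, B([X_i,\xi],[X_i,\xi]).
\]
Since $B$ is negative definite on $\gk$, the assumption $\alpha_i=0$ forces $[X_i,\xi]=0$, and hence $\tR(X_i,\xi) = -\ad_{[X_i,\xi]} = 0$.

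For (b), set $T_1 = \tR(X_i,X_j,X_k,\xi)$, $T_2 = \tR(X_k,X_i,X_j,\xi)$, $T_3 = \tR(X_j,X_k,X_i,\xi)$. The first Bianchi identity gives $T_1+T_2+T_3=0$, and \eqref{eq:RR} applied to the three cyclic permutations of $(i,j,k)$ yields three further linear relations in $(T_1,T_2,T_3)$. Nonzeroness of the triple forces the $3\times 3$ minors of the resulting $4\times 3$ system to vanish; expanding the minor formed by Bianchi together with two instances of \eqref{eq:RR} produces the symmetric expression
\[
  \alpha_i^2 + \alpha_j^2 + \alpha_k^2 - 2(\alpha_i\alpha_j + \alpha_j\alpha_k + \alpha_k\alpha_i),
\]
which, under $\alpha_l = \ve\eta_l^2$, factors as $-(\eta_i+\eta_j+\eta_k)(\eta_i+\eta_j-\eta_k)(\eta_i-\eta_j+\eta_k)(-\eta_i+\eta_j+\eta_k)$. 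After adjusting the signs of the $\eta_l$'s one therefore has $\eta_i+\eta_j+\eta_k=0$. By pair symmetry of $\tR$, the vanishing $\alpha_l=0$ is (via~(a)) equivalent to the vanishing of a particular one of $T_1,T_2,T_3$, so if two of the $\alpha_l$'s were zero all three $T_l$'s would vanish; hence at most one of them is zero. With $\eta_i+\eta_j+\eta_k=0$ and the identity $\alpha_j-\alpha_i=-\ve\eta_k(\eta_j-\eta_i)$ (and two analogues), a direct check confirms that $(\eta_k,\eta_j,\eta_i)$ spans the kernel of the system, giving $(T_1,T_2,T_3)=c(\eta_k,\eta_j,\eta_i)$ for some $c\neq 0$.

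Finally, combining the proportionality with~\eqref{eq:dG2nR} solves for the Christoffels $\G_{ki}^{\hphantom{k}j}$ and $\G_{ik}^{\hphantom{i}j}$ (both of the form $\pm\ve c \la_\bullet/\eta_\bullet$, after using $\eta_j+\eta_k=-\eta_i$ and its permutations), and substituting these into Codazzi~\eqref{eq:codazzi} applied to $T_2$ produces~\eqref{eq:etala} after one more application of $\eta_i+\eta_j+\eta_k=0$. The main obstacle is the discriminant calculation in (b): it is a pleasant surprise that the $3\times 3$ minor of the Bianchi–\eqref{eq:RR} system reduces to exactly the symmetric polynomial whose factorization encodes the sign relation on the $\eta_l$'s. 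Part~(a) is then essentially immediate from the Lie-algebra formalism, and the derivation of~\eqref{eq:etala} is a routine (if slightly tedious) manipulation once the proportionality is in hand.
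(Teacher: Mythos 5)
Your proof is correct and follows essentially the same route as the paper: part (a) via the Killing-form identity, part (b) via the rank condition on the Bianchi-plus-\eqref{eq:RR} system and the factorization of its discriminant, and \eqref{eq:etala} by eliminating the Christoffel symbols between \eqref{eq:dG2nR} and \eqref{eq:codazzi}. The only points the paper spells out that you elide are the degenerate case $i=j$ (where \eqref{eq:etala} holds trivially since $\eta_k=0$ and $\eta_j=-\eta_i$) and the relabelling needed so that the $\eta$'s and differences of $\eta$'s you divide by are nonzero.
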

\begin{proof}
  If $\al_i=0$, then $\tR(X_i,\xi,\xi,X_i)=0$, and so $B([X_i,\xi],[X_i,\xi])=0$. Then $X_i$ and $\xi$ commute, as the Killing form is negative definite on $\gk$, which proves assertion \eqref{it:ali0}.

  To prove assertion \eqref{it:Rijknon0eta} we first note that by \eqref{it:ali0}, at least two of $\al_i, \al_j, \al_k$ (and hence at least two of $\eta_i, \eta_j, \eta_k$) must be nonzero. Permuting $i, j, k$ in \eqref{eq:RR} and using the first Bianchi identity we obtain a system of linear equations for $\tR(X_i,X_j,X_k,\xi), \tR(X_k,X_i,X_j,\xi), \tR(X_j,X_k,X_i,\xi)$ which has a nonzero solution only if $\al_i^2+\al_j^2+\al_k^2 = 2(\al_i\al_j+\al_j\al_k+\al_k\al_i)$ which gives $\pm \eta_i \pm \eta_j \pm \eta_k=0$ for some choice of the signs. Choosing the signs accordingly we get $\eta_i+\eta_j+\eta_k=0$ and then the second statement follows from \eqref{eq:RR}.

  For equation \eqref{eq:etala}, we first note that if $i=j$, then $\eta_k=0$ from the previous statement, and then $\eta_i+\eta_j=0$, so that \eqref{eq:etala} is trivially satisfied. If $i, j$ and $k$ are pairwise unequal, we can assume that $\eta_j \ne \eta_i, \eta_k$. Eliminating $\G_{ki}^{\hphantom{k}j}$ and $\G_{ij}^{\hphantom{j}k}$ from \eqref{eq:dG2nR} and \eqref{eq:codazzi} and using \eqref{it:Rijknon0eta} we get \eqref{eq:etala}.
\end{proof}

\begin{lemma} \label{l:XkH}
  In the assumptions of Theorem~\ref{th:main} \emph{(}and in the above notation\emph{)}, the following holds.
    \begin{enumerate}[label=\emph{(\alph*)},ref=\alph*]
      \item \label{it:Xkali0}
      At any point $x \in M'$ and for all $k=1, \dots, n$, such that $\al_k \ne 0$, we have $X_k(\al_i)=\tR(X_k,X_i,X_i,\xi)=0$, for all $i=1, \dots, n$.

      \item \label{it:rkRxi2}
      If $\rk \tR_\xi \le 2$ at some point $x \in M$, then $\tM=\SL(3)/\SO(3)$ or $\tM=\SU(3)/\SO(3)$.

      \item \label{it:XkH0}
      One of the following conditions is satisfied.
      \begin{enumerate}[label=\emph{(\roman*)},ref=\roman*]
        \item \label{it:XkH0gen}
        At any point $x \in M'$ and for all $k=1, \dots, n$, such that $\al_k \ne 0$, we have $X_k(H)=X_k(\la_i)=0$, for all $i=1, \dots, n$.

        \item \label{it:XkH0Wu}
        The space $\tM$ is one of the spaces $\SL(3)/\SO(3)$ or $\SU(3)/\SO(3)$, and at any point $x \in M'$ there exists $i \in\{1,2,3,4\}$, such that $\al_i=\la_i=0$.
      \end{enumerate}
  \end{enumerate}
\end{lemma}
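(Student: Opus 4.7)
The plan is to prove (a) by specializing equation~\eqref{eq:RR} to $i=j$: the first summand vanishes, and using the antisymmetry $\tR(X_i,X_k,X_i,\xi)=-\tR(X_k,X_i,X_i,\xi)$, the equation reduces to $2\al_k\tR(X_k,X_i,X_i,\xi)=0$; hence $\tR(X_k,X_i,X_i,\xi)=0$ whenever $\al_k\ne 0$. Since $\<\tR_{X_i}\xi,X_k\>=\tR(X_k,X_i,X_i,\xi)$ by the standard symmetries of $\tR$, equation~\eqref{eq:dG1nR} then yields $X_k(\al_i)=0$ as well.

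For (b), I would exploit the root-space decomposition of $\gp$. Choose a Cartan subalgebra $\ga'\subset\gp$ containing $\xi$; relative to the associated decomposition $\gp=\ga'\oplus\bigoplus_\beta\gp_\beta$, the Jacobi operator $\tR_\xi$ acts as multiplication by $\ve\<\beta,\xi\>^2$ on each $\gp_\beta$ and vanishes on $\ga'$, so
\[
\rk \tR_\xi \;=\; \sum_{\beta\in\Delta^+,\ \<\beta,\xi\>\ne 0}\dim\gp_\beta.
\]
Since $\xi\ne 0$, at least one term contributes. A classification-based inspection of irreducible restricted root systems of rank $\ge 2$ and their multiplicities (as tabulated in \cite{Ber, T1}) shows that the bound $\rk\tR_\xi\le 2$ is possible only for the type-$A_2$ root system with all multiplicities equal to $1$, which corresponds precisely to the spaces $\tM=\SU(3)/\SO(3)$ and $\tM=\SL(3)/\SO(3)$.

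For (c), I plan to rewrite $X_k(\al_i)=0$ --- which holds for any $k$ with $\al_k\ne 0$ by (a) --- via $\al_i=-\la_i^2+H\la_i+C$ as the pointwise linear relation
\begin{equation*}
\la_i X_k(H)+(H-2\la_i)X_k(\la_i)=0,\qquad i=1,\dots,n, \tag{$\ast$}
\end{equation*}
valid for every such $k$. Regarded as a linear system in $(X_k(\la_1),\dots,X_k(\la_n))$ (with $X_k(H)=\sum_i X_k(\la_i)$), the coefficient matrix has nonzero determinant unless either some $\la_i=H/2$ or $1+\sum_i \la_i/(H-2\la_i)=0$. In the non-degenerate case, $(\ast)$ forces $X_k(\la_i)=0$ for all $i$ and hence $X_k(H)=0$, establishing alternative (i). Otherwise, one extracts $X_k(\la_i)=\la_i X_k(H)/(2\la_i-H)$ for $\la_i\ne 0$ and $X_k(\la_i)=0$ for $\la_i=0$ (when $H\ne 0$), and summing yields the rational spectral constraint $\sum_{i:\,\la_i\ne 0}\la_i/(2\la_i-H)=1$.

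The plan is then to combine this constraint with the curvature-adapted structure --- each $\tR_\xi$-eigenspace $L_\al$ is $S$-invariant and $S|_{L_\al}$ has at most two eigenvalues summing to $H$ --- and with the Einstein trace identity $\sum_i\al_i=\oRic(\xi,\xi)$, to conclude that $\rk \tR_\xi\le 2$ at such $x$. Part (b) then forces $\tM$ to be $\SU(3)/\SO(3)$ or $\SL(3)/\SO(3)$ and $n=4$. At least $n-2=2$ indices then satisfy $\al_i=0$, so the corresponding $\la_i$'s lie in $\{\mu_+,\mu_-\}$ with $\mu_++\mu_-=H$ and $\mu_+\mu_-=-C$; a final algebraic check of $(\ast)$ in this restricted configuration identifies an index with $\al_i=\la_i=0$, yielding (ii). I expect the principal obstacle to be the spectral reduction step from the rational identity $\sum \la_i/(2\la_i-H)=1$ to the rank bound $\rk \tR_\xi\le 2$: one must use the precise compatibility between the $S$- and $\tR_\xi$-eigenspace multiplicities to rule out configurations with three or more nonzero $\al_i$, and this is the most delicate piece of the argument.
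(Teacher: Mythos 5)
Parts \eqref{it:Xkali0} and \eqref{it:rkRxi2} of your proposal are fine. For \eqref{it:Xkali0} your argument is exactly the paper's: \eqref{eq:RR} with $i=j$ gives $2\al_k\tR(X_k,X_i,X_i,\xi)=0$, and \eqref{eq:dG1nR} then gives $X_k(\al_i)=0$. For \eqref{it:rkRxi2} you take a genuinely different route: you count, with multiplicity, the positive restricted roots not orthogonal to $\xi$ and appeal to the classification of restricted root systems, whereas the paper observes that $\Ker\tR_\xi$ is a Lie triple system of codimension at most $2$ and invokes the Berndt--Olmos inequality $\rk\tM\le\iota(\tM)$ together with the classification of spaces with $\iota(\tM)=2$. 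Your route does work (for every irreducible system of rank $\ge 2$ other than $\mathrm{A}_2$ with all multiplicities $1$, at least three positive roots, counted with multiplicity, miss any given hyperplane), but the ``classification-based inspection'' is asserted rather than performed; the paper's citation buys a one-line argument, yours buys self-containedness at the cost of an actual case check.

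Part \eqref{it:XkH0} is where there is a genuine gap. Your only analytic input is $X_k(\al_i)=0$, i.e.\ the differentiated Gauss relation $(\ast)$, and the entire weight of the argument rests on the degenerate case of that linear system, which you reduce to the scalar identity $\sum_{i:\la_i\ne0}\la_i/(2\la_i-H)=1$ and then claim (and explicitly flag as unproven) forces $\rk\tR_\xi\le 2$. That implication fails on spectral grounds alone: by \eqref{eq:Gaussinval} each eigenspace $L_\al$ with $\al\ne0$ carries $S$-eigenvalues $\la_\pm^\al$ with $\la_+^\al+\la_-^\al=H$, so $\tfrac{\la_+^\al}{2\la_+^\al-H}+\tfrac{\la_-^\al}{2\la_-^\al-H}=1$ identically, and one can arrange several distinct nonzero $\al$'s with suitable multiplicities $p_\pm^\al$ so that the weighted sum equals $1$ while $\rk\tR_\xi$ is large; adding the trace identity $\sum_i\al_i=\oRic(\xi,\xi)$ still leaves only two scalar constraints, far from pinning down the rank. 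The ingredient you are missing is the stronger pointwise fact the paper extracts from \eqref{eq:dG2nR} together with the Codazzi equation \eqref{eq:codazzi}: for the fixed $k$ with $\al_k\ne0$, assertion \eqref{it:Xkali0} gives $\G_{ii}^{\hphantom{i}k}=0$ and hence $X_k(\la_i)=0$ --- not merely $X_k(\al_i)=0$ --- for \emph{every} $i$ with $\al_i\ne\al_k$. This collapses the problem to a clean alternative: either some such $i$ has $\la_i\ne0$, whence $X_k(H)=0$ and case \eqref{it:XkH0gen} follows (the indices with $\al_i=\al_k$ being handled by the constancy of the coefficients of the quadratic \eqref{eq:Gaussal} along $X_k$); or all such $i$ have $\la_i=0$, in which case $C=0$, every nonzero $\al_i$ equals a constant $\al'$, and the counting identity $(p_+-1)(p_--1)H^2=-(p_+-p_-)^2\al'$ forces either $H$ constant (case \eqref{it:XkH0gen} again) or $p_+=p_-=1$, i.e.\ $\rk\tR_\xi=2$, which feeds into \eqref{it:rkRxi2} and produces the indices with $\al_i=\la_i=0$ required by \eqref{it:XkH0Wu}. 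Your proposal never uses Codazzi in this part, and without it I do not see how to close the degenerate case; the concluding ``final algebraic check'' that is supposed to produce an index with $\al_i=\la_i=0$ is likewise not carried out.
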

\begin{proof}
  Assertion~\eqref{it:Xkali0} follows from \eqref{eq:RR} (with $i=j$) and \eqref{eq:dG1nR}.

  For assertion~\eqref{it:rkRxi2}, we note that the kernel of $\tR_\xi$ is the centraliser of $\xi$ in $\gp=T_x\tM$, and so is a Lie triple system. The minimal codimension $\iota(\tM)$ of a proper Lie triple system (a proper totally geodesic submanifold of the symmetric space $\tM$) is called the \emph{index} of $\tM$. By \cite[Theorem~1.1]{BO} we have $\rk \tM \le \iota(\tM)$. As $\rk \tR_\xi \le 2$, we have $\iota(\tM) \le 2$, and so $\iota(\tM)=\rk \tM = 2$. Then by \cite[Theorem~1.2]{BO}, \cite{On}, we obtain that $\tM$ is isometric to either $\SL(3)/\SO(3)$ or its compact dual $\SU(3)/\SO(3)$, as required.

  For assertion \eqref{it:XkH0}, choose and fix a point $x \in M'$. We first note that there always exists $k=1, \dots, n$, such that $\al_k \ne 0$ (for otherwise $\xi$ commutes with $T_x\tM$ which contradicts the fact that $\tM$ is irreducible). Choose and fix one such $k$. Assuming $i=1, \dots, n$, is such that $\al_i \ne \al_k$ (at least one such $i$ exists, e.g. when $\al_i=0$), assertion \eqref{it:Xkali0} and \eqref{eq:dG2nR} give $\G_{ii}^{\hphantom{i}k}=0$, and then from~\eqref{eq:codazzi} we obtain $X_k(\la_i) = 0$.

  If we suppose that $\la_i \ne 0$ for at least one $i=1, \dots, n$, with $\al_i \ne \al_k$, then \eqref{eq:Gaussal} gives $X_k(H)=0$. To prove that $X_k(\la_i) = 0$ also for those $i$ for which $\al_i=\al_k$, we note that such $\la_i$ satisfies the quadratic equation \eqref{eq:Gaussal} whose coefficients are constant along $X_k$. Thus condition \eqref{it:XkH0gen} is satisfied.

  The only remaining case is the following: for all $i=1, \dots, n$, with $\al_i \ne \al_k$, we have $\la_i=0$. As one of the $\al_i$ is always zero, we obtain that up to relabelling, for some $1 \le r < n$ we have $\al_i=\la_i=0$ when $i=1, \dots, r$, and $\al_i=\al'$ when $i=r+1, \dots, n$. Note that $C=0$ from \eqref{eq:Gaussal}, and that $\al'$ is a nonzero constant as $\tM$ is Einstein. From \eqref{eq:Gaussal} we have $\la_i^2-H\la_i+\al'=0$ for $i=r+1, \dots, n$. If $p_\pm$ are the multiplicities of the roots $\frac12 H \pm \frac12 \sqrt{H^2-4\al'}$ of this equation respectively, the fact that $\sum_{i=1}^{n} \la_i = H$ gives $(p_+ - 1)(p_- - 1)H^2=-(p_+ - p_-)^2 \al'$. The right-hand side is constant on $M$. So either $H$ is constant on $M$ (and then all $\la_i$ also are, and we again get condition \eqref{it:XkH0gen}), or $p_+=p_-=1$. But then $\rk \tR_\xi = 2$, and by assertion~\eqref{it:rkRxi2} we get condition~\eqref{it:XkH0Wu}.
\end{proof}

Furthermore, in the notation of Section~\ref{ss:Gm} we have the following.

\begin{lemma} \label{l:nons}
{\ }
    \begin{enumerate}[label=\emph{(\alph*)},ref=\alph*]
      \item \label{it:sing}
      The Gauss map is singular at a point $x \in M'$ if and only if at the point $x$ we have $\la_i=\al_i=0$ for some $i=1, \dots, n$. If the Gauss map is singular at some point of $M'$, then $C=0$.

      \item \label{it:nonsing}
      Suppose that at a point $x \in M$, the Gauss map is nonsingular. Then there exists a connected, open subset $\mU \subset M'$ such that the following holds.
      \begin{enumerate}[label=\emph{(\roman*)},ref=\roman*]
        \item \label{it:nsing1}
        At any $y \in \mU$, the Gauss map is nonsingular, and the vector $\xi(y) \in \gp=T_y\tM$ is regular \emph{(}so that there is a uniquely defined Cartan subalgebra $\ga(y) \subset T_y\tM$ containing $\xi(y)$\emph{)}; moreover, for any $i=1, \dots, n$, with $\al_i \ne 0$, the eigenspace $L_{\al_i}$ of $\tR_\xi$ is a root subspace corresponding to a restricted root defined by $\ga(y)$.

        \item \label{it:nsing2} 
        For any $i=1, \dots, n$, we either have $\al_i(y)=0$ for all $y \in \mU$, or $\nabla \al_i(y) \ne 0$ for all $y \in \mU$. Moreover, for all $i=1, \dots, n$ and all $y \in \mU$, we have $\la_i(y) \ne 0$.
      \end{enumerate}
  \end{enumerate}
\end{lemma}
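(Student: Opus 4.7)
For part~\eqref{it:sing}, my plan is to compute $\operatorname{Im}(d\rGA)_{x,e}$ explicitly. By Lemma~\ref{l:dgamma}, $(d\rGA)_{x,e}(X,U)=-SX+[U,\xi]$, landing in $\xi^\perp=T_xM$. Using $\tR_\xi=\ad_\xi^2|_\gp$, $\Ad$-invariance of $B$, and negative-definiteness of $B$ on $\gk$, I verify that $\Ker\tR_\xi|_\gp=\Ker\ad_\xi|_\gp$, and hence that $[\gk,\xi]^\perp\cap T_xM=L_0:=\Ker\tR_\xi\cap T_xM$. Since $M$ is curvature-adapted ($S$ preserves each $L_\al$) and $L_\al\subset[\gk,\xi]$ for $\al\ne 0$, it follows that
\begin{equation*}
\operatorname{Im}(d\rGA)_{x,e}=[\gk,\xi]\oplus S(L_0).
\end{equation*}
Full rank $n$ thus amounts to invertibility of $S|_{L_0}$, which fails exactly when some $X_i\in L_0$ satisfies $SX_i=0$, i.e., when some index $i$ has $\al_i=\la_i=0$. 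Substituting into~\eqref{eq:Gaussal} then gives $C=0$; as $C$ is constant on $M$, this completes~\eqref{it:sing}.

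For part~\eqref{it:nonsing}, nonsingularity being an open condition, the Gauss image contains a nonempty open subset of the unit sphere of $T_x\tM$. I introduce three genericity conditions on a regular $\xi\in\gp$: \textup{(G1)} $\<\beta,\xi\>^2\ne\<\gamma,\xi\>^2$ for all $\beta,\gamma\in\Delta$ with $\beta\ne\pm\gamma$; \textup{(G2)} $\xi$ is not a scalar multiple of any dual vector $\beta^\sharp\in\ga$, $\beta\in\Delta$; \textup{(G3)} $\<\beta,\xi\>^2\ne\ve C$ for every $\beta\in\Delta$. Each associated ``bad'' set lies in a proper real-algebraic subvariety of $\gp$, so the generic regular vectors form an open dense subset. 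Since $\Ad_h$ preserves each of these conditions, the Gauss image contains a generic element $\rGA(y_0,h_0)$, yielding a point $y_0\in M$ near $x$ with $\xi(y_0)$ regular and generic. Using openness of all the conditions together with density of $M'$ in $M$, I choose a connected open $\mU\subset M'$ of such points on which the Gauss map remains nonsingular, $\xi$ remains regular, and \textup{(G1)}--\textup{(G3)} persist. Then~\textup{(G1)} forces each nonzero eigenspace $L_{\al_i}$ on $\mU$ to coincide with the single root subspace $\gp_{\beta_i}$ satisfying $\al_i=\ve\<\beta_i,\xi\>^2$, which is~\eqref{it:nsing1}.

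For~\eqref{it:nsing2}, the dichotomy ``$\al_i\equiv 0$ on $\mU$ or $\al_i$ is nowhere zero on $\mU$'' was noted before the lemma. Assume $\al_i\ne 0$ on $\mU$. By $\Ad_\rK$-invariance, the eigenvalue polynomial $P_i(\xi)=\ve\<\beta_i,\xi\>^2$ has $\gp$-gradient $\nabla_\gp P_i|_\xi=2\ve\<\beta_i,\xi\>\beta_i^\sharp\in\ga(y)$; combining this with $\on_X\xi=-SX$ and symmetry of $S$ I obtain
\begin{equation*}
\nabla_M\al_i=-2\ve\<\beta_i,\xi\>\,S\bigl(\pi_{T_yM}\beta_i^\sharp\bigr),
\end{equation*}
where $\pi_{T_yM}$ is the orthogonal projection $T_y\tM\to T_yM$, and $\pi_{T_yM}\beta_i^\sharp\in\ga(y)\ominus\<\xi\>=L_0$. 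By~\textup{(G2)} this projection is nonzero, and by~\eqref{it:sing} combined with nonsingularity, $S|_{L_0}$ is invertible, so $\nabla_M\al_i\ne 0$. Finally, for $\la_i\ne 0$: in $L_0$-directions this follows from invertibility of $S|_{L_0}$; in $L_{\al_i}$-directions with $\al_i\ne 0$, \eqref{eq:Gaussal} shows $\la_i=0$ would force $\al_i=C$, which is excluded by~\textup{(G3)}.

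The principal obstacle is the gradient computation underlying~\eqref{it:nsing2}. I must identify $\al_i$ with the pullback of a fixed polynomial on $\gp$ despite the varying Cartan $\ga(y)$, and then combine two independent mechanisms: the root-theoretic genericity~\textup{(G2)} to keep $\pi_{T_yM}\beta_i^\sharp$ a nonzero $L_0$-vector, and the nonsingularity of the Gauss map (through~\eqref{it:sing}) to prevent $S$ from annihilating it.
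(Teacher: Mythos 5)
Your proof is correct. Parts (a) and (b)(i) are essentially the paper's argument in mildly different clothing: the paper detects singularity by exhibiting a nonzero vector orthogonal to both $S(T_xM)$ and $[\gk,\xi]$, which is the orthogonal-complement version of your formula $\operatorname{Im}(d\rGA)_{x,e}=[\gk,\xi]\oplus S(L_0)$, and it chooses $\mU$ exactly as you do, by intersecting the open Gauss image with the open dense $\Ad_\rK$-invariant set of regular vectors satisfying your (G1) (stated there as avoidance of the hyperplanes $\<\gamma_1-\gamma_2,T\>=0$). The genuine divergence is in (b)(ii). The paper argues indirectly: if $\nabla\al_i$ vanished on an open subset, then $\al_i$ would equal a constant $c$ there, and for $c\ne 0$ the unit vectors $T$ for which $\tR_T$ has eigenvalue $c$ form a proper Zariski-closed set, contradicting openness of the Gauss image; the claim $\la_i\ne 0$ is handled by a similar interior argument, and $\mU$ is shrunk at the end. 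You instead prove the pointwise formula $\nabla_M\al_i=-2\ve\<\beta_i,\xi\>\,S(\pi_{T_yM}\beta_i)$ --- which checks out against \eqref{eq:dG1nR} combined with Remark~\ref{rem:nsroots} --- and rule out its vanishing with the additional genericity hypotheses (G2), (G3) together with the invertibility of $S|_{L_0}$ supplied by part (a). This buys an explicit gradient and avoids the final shrinking step, at the price of justifying that the bad sets for (G2) and (G3) are closed with empty interior; that does hold, since each is a finite union of $\Ad_\rK$-saturations of subsets of a fixed Cartan subalgebra of dimension at most $\dim\gp-2$, so your argument is complete. Two small caveats: your $P_i$ is only locally analytic, not a polynomial on $\gp$ (eigenvalues of $\tR_T$ are merely algebraic in $T$), and the chain rule $X(\al_i)=dP_i(\on_X\xi)$ silently uses $\on\,\tR=0$ to make $P_i$ a fixed function of the parallel-transported normal --- the same step the paper uses to derive \eqref{eq:dG1nR}.
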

\begin{remark} \label{rem:nsroots}
  In the assumption of assertion~\eqref{it:nonsing} of Lemma~\ref{l:nons}, for $x \in \mU$, let $\ga \subset \gp = T_x\tM$ be the Cartan subalgebra containing $\xi$. Then, as $\xi$ is regular, we have $L_0=\ga \cap \xi^\perp$. Furthermore, if $\gamma$ is a restricted root and $\gp_\gamma$ is the corresponding root space, we have $\tR_\xi X = \ve \<\gamma, \xi\>^2 X$, for all $X \in \gp_\gamma$ (see Section~\ref{ss:roots}), so that $\gp_\gamma \subset L_\al$, with $\al=\ve \<\gamma, \xi\>^2$. The last statement of assertion~\eqref{it:nonsing}\eqref{it:nsing1} says that in fact $\gp_\gamma = L_\al$, that is, for no two restricted roots $\gamma_1 \ne \pm \gamma_2$, we can have $\<\gamma_1, \xi\> = \pm \<\gamma_2, \xi\>$. As for every $i=1, \dots, n$, we have $X_i \in L_{\al_i}$ by \eqref{eq:Gaussinval}, this fact implies that for every $i=1, \dots, n$, such that $\al_i \ne 0$ there is a well defined (up to a sign) root vector $\beta_i \in \Delta$ such that $X_i \in \gp_{\beta_i}$. Then $\al_i=\ve \<\beta_i, \xi\>^2$ and also $\tR_\xi X_i = \ve \<\beta_i, \xi\>^2 X_i$ and $\tR(X_i, \xi) X_k=\ve \<\beta_i, \xi\>\<\beta_i, X_k\>X_i$, for all $X_k \in L_0$.
\end{remark}
\begin{proof}
  By Lemma~\ref{l:dgamma}, the Gauss map is singular at $x \in M$ if and only if there exists a nonzero vector $Y \in T_xM$ such that $Y \perp \on_X \xi$, for all $X \in T_xM$, and $Y \perp [U, \xi]$, for all $U \in \gk$. The first condition is equivalent to $\on_Y \xi = 0$, and the second one, to $[\xi, Y]=0$, and so, to $\tR_\xi Y=0$. Hence $Y$ must be an eigenvector of both $\tR_\xi$ and $S$ with both corresponding eigenvalues being zeros. Then $C=0$ by \eqref{eq:Gaussal}. This proves assertion~\eqref{it:sing}.

  For assertion~\eqref{it:nonsing}, let $\mV \subset \gp$ the set of unit regular vectors. Every $T \in \mV$ defines a unique Cartan subalgebra $\ga \ni T$ and the root system $\Delta \subset \ga$. Let $\mV' \subset \mV$ be the set of vectors $T \in \mV$ which do not lie in any hyperplane $\<\gamma_1-\gamma_2, T\>=0$ where $\gamma_1, \gamma_2 \in \Delta, \, \gamma_1 \ne \gamma_2$. Then $\mV'$ is open and dense in the unit sphere of $\gp$. Suppose the Gauss map is nonsingular at $x \in M$ (as the set of such points is open, we can assume that $x \in M'$). Then the Gauss image of a (connected) neighbourhood of $x$ in $M'$ is open in the unit sphere of $\gp=T_x\tM$ and so there exists an open, connected subset $\mU$ of that neighbourhood such that for all $y \in \mU$, the Gauss map is still nonsingular at $y$ and that $\xi(y) \in \gp =T_y\tM$ lies in $\mV'$. This proves~\eqref{it:nsing1}, as from $\xi(y) \in \mV'$ it follows that for no $\al_i \ne 0$ the subspace $L_{\al_i}$ may contain more than one subspace $\gp_\gamma$ (see Remark~\ref{rem:nsroots}).

  Furthermore, suppose that for some $i=1, \dots, n$, the subset of $\mU$ on which $\nabla \al_i = 0$ has a nonempty interior $\mU_0$. Then for some $c \in \br$, we have $\al_i(y)=c$ on a connected component of $\mU_0$. For any $c \in \br$, the set of unit vectors $T \in \gp$ such that $\tR_T$ has eigenvalue $c$ is Zariski closed (satisfies a certain $\Ad_\rK$-invariant polynomial equation), and its complement is nonempty for any $c \ne 0$, as $\rk \tM > 1$. As the Gauss map is nonsingular at any $y \in \mU_0 \subset \mU$, we must have $c=0$. Replacing $\mU$ by a smaller subset if necessary we can assume that for any $i=1, \dots, n$, either $\al_i=0$ on $\mU$, or $\nabla \al_i$ is nowhere $0$ on $\mU$, as required. Now suppose that for some $i=1, \dots, n$, the zero set of $\la_i$ on $\mU$ has a nonempty interior $\mU_1$. Then by \eqref{eq:Gaussal}, $\al_i$ is constant on $\mU_1$, and so $\al_i=0$. But this contradicts assertion~\eqref{it:sing}, as at all points $y \in \mU_1 \subset \mU$, the Gauss map is nonsingular. This proves~\eqref{it:nsing2} (by replacing $\mU$ by a smaller open, connected subset if necessary).
\end{proof}

\section{Nonsingular Gauss map}
\label{s:reg}

\begin{proposition} \label{p:allsing}
  Let $\tM$ be an irreducible symmetric space of rank at least $2$, and let $M$ be an Einstein $C^4$-hypersurface in $\tM$. Then at any point of $M$ the Gauss map \emph{(}as defined in Section~\ref{ss:Gm}\emph{)} is singular.
\end{proposition}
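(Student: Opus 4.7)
\smallskip

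\noindent\emph{Proof plan.} The plan is to argue by contradiction. Suppose that the Gauss map of $M$ is nonsingular at some point $x_0 \in M$; I will show this is incompatible with $\tM$ being irreducible of rank at least two. Since nonsingularity is an open condition, Lemma~\ref{l:nons}\eqref{it:nonsing} furnishes a nonempty connected open subset $\mU \subset M'$ on which, besides nonsingularity of the Gauss map, the vector $\xi(y)$ is regular, each nonzero eigenspace $L_{\al_i}$ of $\tR_\xi|_{T_yM}$ coincides with a single restricted root space $\gp_{\beta_i(y)}$ (Remark~\ref{rem:nsroots}), each $\al_i$ is either identically zero on $\mU$ or has nowhere-vanishing gradient, and every principal curvature $\la_i$ is nowhere zero on $\mU$.

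The last property rules out alternative~\eqref{it:XkH0Wu} of Lemma~\ref{l:XkH}\eqref{it:XkH0}, so alternative~\eqref{it:XkH0gen} must hold throughout $\mU$; combined with Lemma~\ref{l:XkH}\eqref{it:Xkali0} this gives $X_k(H) = X_k(\la_i) = X_k(\al_i) = 0$ for all $i$ whenever $\al_k \ne 0$. Hence the gradients of $H$, $\la_i$, and $\al_i$ all lie in the zero eigenspace $L_0 = \ga(y) \cap \xi(y)^\perp$, which has dimension $\rk \tM - 1 \ge 1$. This heavily constrains how the geometric invariants can vary on $\mU$: they depend only on the component of $y$ along the abelian direction complementary to $\xi$ in $\ga$.

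The next stage is to invoke the root combinatorics. In any irreducible root system of rank at least two there exist triples $\beta_1,\beta_2,\beta_3 \in \Delta$ with $\beta_1+\beta_2+\beta_3 = 0$ (for instance two simple roots whose sum is a root, together with the opposite of that sum). Picking eigenvectors $X_l \in \gp_{\beta_l}$, Lemma~\ref{l:Rijk}\eqref{it:Rijknon0eta} furnishes the rigid identity~\eqref{eq:etala} coupling $(\la_1,\la_2,\la_3)$ to $(\<\beta_1,\xi\>,\<\beta_2,\xi\>,\<\beta_3,\xi\>)$; together with the Gauss identities $\la_l^2 - H\la_l + C = \ve\<\beta_l,\xi\>^2$, this pins each $\la_l$ to an algebraic locus determined by $H$, $C$, and the roots. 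Differentiating these identities along directions $X_k$ with $\al_k \ne 0$ (along which $\la_i,\al_i,H$ are constant by the previous step) and using Codazzi~\eqref{eq:codazzi} together with the connection relation~\eqref{eq:dG2nR} produces further constraints. Running this over the many triples available in a rank $\ge 2$ irreducible $\Delta$ should force one of the $\la_l$ to vanish simultaneously with its $\al_l$, contradicting the characterisation of nonsingularity in Lemma~\ref{l:nons}\eqref{it:sing}.

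The hard part will be orchestrating these relations into a contradiction uniformly across the possible shapes of $\Delta$. The simply-laced systems ($A$, $D$, $E$) should be comparatively clean because all root multiplicities coincide and no doubling occurs; by contrast, the $BC$ and $G_2$ cases allow $2\beta \in \Delta$ and roots of different multiplicities, so that $L_{\al_i}$ may properly contain a single $\gp_{\beta_i}$ and $S|_{L_{\al_i}}$ may split into two eigenvalues via~\eqref{eq:Gaussinval}. I expect Lemma~\ref{l:rr}\eqref{it:rr1} to be the main tool for ruling out accidental vanishing of the curvature triples in these marginal cases, and the $\Ad_\rK$-invariance of the Gauss image together with the open-image consequence of nonsingularity to supply the geometric input needed when $\xi$ approaches a nongeneric regular vector. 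Organising this case analysis while keeping the argument uniform is where the bulk of the technical work will be.
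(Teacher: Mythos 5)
Your setup (the contradiction framework, the choice of $\mU$ via Lemma~\ref{l:nons}\eqref{it:nonsing}, ruling out alternative~\eqref{it:XkH0Wu} of Lemma~\ref{l:XkH}\eqref{it:XkH0}, and the conclusion that $\nabla H, \nabla\la_i, \nabla\al_i \in L_0$) matches the paper's opening moves. But the core of the argument is missing, and the plan as stated would not close. First, you propose to extract the decisive constraints by ``differentiating \dots along directions $X_k$ with $\al_k \ne 0$'' --- but along those directions all the relevant quantities are constant, so differentiation yields nothing beyond the algebraic relations you already have. The productive derivatives are in the \emph{opposite} directions, $X_k \in L_0 = \ga'$ (where $\al_k = 0$): these give $X_k(\la_i) = \<\beta_i,\xi\>^{-1}\<\beta_i,X_k\>(C+\la_i(H-\la_k))$ and, after combining with \eqref{eq:dG1nR}, the key identity~\eqref{eq:regXkH} and its consequence~\eqref{eq:pibeta}, which places $\pi_{\ga'}\beta_i$ in $\Span(\nabla H, S'\nabla H)$ for all but at most two roots. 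That is what forces $\rk\tM \in \{2,3\}$ and makes a finite, explicit case analysis possible. Without this rank reduction your ``uniform orchestration over all shapes of $\Delta$'' has no handle; and even granting it, the purely algebraic relations you list (the Gauss identities plus~\eqref{eq:etala} over triples $\beta_1+\beta_2+\beta_3=0$) are insufficient even in rank $2$: in the $\mathrm{A}_2/\mathrm{G}_2$ case they produce a single relation among $H$, $C$ and the angle of $\xi$, and one needs the extra equations~\eqref{eq:regXkH} coming from the $L_0$-derivatives to eliminate the angle and conclude.

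Second, your proposed endpoint of the contradiction is off target. On $\mU$ one has $\la_i \ne 0$ everywhere and each $\al_i$ either identically zero or nowhere zero, so ``forcing some $\la_l$ to vanish together with $\al_l$'' is not what the relations deliver. The actual contradictions in the paper are of a different nature: in rank $2$, $H$ is shown to satisfy a nontrivial polynomial with constant coefficients, hence is locally constant, contradicting $\nabla H \ne 0$ on the subset $\mU'$ produced in Lemma~\ref{l:reg}\eqref{it:reg3}; in rank $3$, the components of $\xi$ are shown to satisfy a fixed polynomial equation, contradicting the fact that a nonsingular Gauss map has open image in the unit sphere of $\gp$. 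Establishing that $H \ne 0$ and $\nabla H \ne 0$ on some $\mU'$ (and that $\al_i = \al_j \ne 0$ implies $\la_i = \la_j$, which you need in order to place the $X_i$ in single root spaces and perturb them so the curvature triples are nonzero) is itself a nontrivial intermediate step that your plan does not address.
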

\begin{proof}
Suppose there exists a point on $M$ at which the Gauss map is nonsingular. We choose an open, connected subset $\mU \subset M'$ as in Lemma~\ref{l:nons}\eqref{it:nonsing} and take $x \in \mU$. Let $\ga \subset \gp=T_x\tM$ be the Cartan subalgebra containing $\xi(x)$. Denote $\ga'=\ga \cap \xi^\perp$ (note that $\ga'=L_0$). For $i=1, \dots, n$ such that $\al_i \ne 0$, we have well-defined (up to a sign) root vectors $\beta_i \in \ga$ such that $\al_i=\ve \<\beta_i, \xi\>^2$ (see Remark~\ref{rem:nsroots}).

\begin{lemma} \label{l:reg} 
  In the assumptions of Proposition~\ref{p:allsing} and in the above notation, the following holds.
    \begin{enumerate}[label=\emph{(\alph*)},ref=\alph*]
      \item \label{it:reg1}
      Let $x \in \mU$ and let $X_i, X_k$ be such that $\al_i \ne 0,\, \al_k=0$ \emph{(}so that $X_k \in \ga'$\emph{)}. Then $X_k(\la_i)=\<\beta_i,\xi\>^{-1} \<\beta_i,X_k\>$ $(C+\la_i(H-\la_k))$.

      \item \label{it:reg2}
      Let $x \in \mU$ and suppose $\al_i \ne 0$. Then
      \begin{equation}  \label{eq:pibeta}
        \begin{gathered}
        c_i \pi_{\ga'} \beta_i = -\la_i \al_i^{-1} \<\beta_i, \xi\> ((H\la_i+2C)S' +(2C \la_i-CH-2H\al_i) \id_{\ga'}) \nabla H,\\
        \text{where } c_i=(2H^3+8CH) \la_i + C H^2 - 4 H^2 \al_i+4 C^2
        \end{gathered}
      \end{equation}
      and $S'$ is the restriction of $S$ to $\ga'$, $\nabla H$ is the gradient of $H$ and $\pi_{\ga'}$ is the orthogonal projection to $\ga'$ \emph{(}note that $\nabla H \in \ga'$ by Lemma~\ref{l:XkH}\eqref{it:XkH0} and Lemma~\ref{l:nons}\eqref{it:sing}\emph{)}.

      \item \label{it:reg3}
      There exists an open subset $\mU' \subset \mU$ such that for all $y \in \mU'$ we have $H \ne 0, \; \nabla H \ne 0$, and moreover, if $\al_i=\al_j \ne 0$, then $\la_i=\la_j$, for all $i,j=1, \dots, n$.

      \item \label{it:regrkle3}
      $\rk \tM \in \{2, 3\}$.
  \end{enumerate}
\end{lemma}
\begin{proof}
For assertion~\eqref{it:reg1}, let $\al_k=0 \ne \al_i$. Then from \eqref{eq:dG2nR} we get $\G_{ii}^{\hphantom{i}k} = \la_i \alpha_i^{-1} \tR(X_k,X_i,X_i,\xi)$, and so $X_k(\la_i)=(1-(\la_k-\la_i)\la_i \alpha_i^{-1})\tR(X_k,X_i,X_i,\xi)$ by \eqref{eq:codazzi}. As $\al_i=\ve \<\beta_i, \xi\>^2$ and $\tR(X_i, \xi) X_k=\ve \<\beta_i, \xi\>\<\beta_i, X_k\>X_i$ by Remark~\ref{rem:nsroots}, the claim follows from~\eqref{eq:Gaussal}.

To prove assertion~\eqref{it:reg2}, take $\al_k=0 \ne \al_i$. Differentiating \eqref{eq:Gaussal} in the direction of $X_k$ and substituting $X_k(\al_i)$ from~\eqref{eq:dG1nR} we get $(H-2\la_i) X_k(\la_i) + \la_i X_k(H) = -2 \la_k \tR(X_k,X_i,X_i,\xi)$, and so we obtain $(-\la_i (H^2 + 2C) +H(2\al_i-C)+ (H\la_i+2C) \la_k )\tR(X_k,X_i,X_i,\xi) = -\la_i \al_i X_k(H)$ by \eqref{eq:Gaussal}, which gives
\begin{equation} \label{eq:regXkH}
  (-\la_i (H^2 + 2C) +H(2\al_i-C) + (H\la_i+2C) \la_k ) \<\beta_i, X_k\> = -\la_i \<\beta_i, \xi\> X_k(H).
\end{equation}
As this is satisfied for all $X_k \in \ga'$ we get $((-\la_i (H^2 + 2C) +H(2\al_i-C)) \id_{\ga'} + (H\la_i+2C) S' ) \pi_{\ga'} \beta_i = -\la_i \<\beta_i, \xi\> \nabla H$. Acting by $S'$ on both sides, using the fact that $S'^2=HS'+C\id_{\ga'}$ which follows from \eqref{eq:Gaussinv} and eliminating $S' \pi_{\ga'} \beta_i$ we obtain~\eqref{eq:pibeta}, as required. 

Suppose $\mU_0 \subset \mU$ is an open subset on which $H=0$. Then $C \ne 0$ (as otherwise $\la_k=0$ on $\mU_0$ by \eqref{eq:Gaussal}, in contradiction with Lemma~\ref{l:nons}\eqref{it:nonsing}\eqref{it:nsing2}), and from \eqref{eq:pibeta} we obtain $\pi_{\ga'} \beta_i = 0$, a contradiction, as the root vectors $\beta_i$ span $\ga$. Suppose $\mU_1 \subset \mU$ is a connected, open subset on which $\nabla H=0$ and $H \ne 0$. Then for all $i$ such that $\al_i \ne 0$ and $\pi_{\ga'} \beta_i \ne 0$, equation \eqref{eq:pibeta} gives $c_i=0$. Using \eqref{eq:Gaussal} to eliminate $\la_i$ we obtain $16 H^4 \al_i^2 + 4 H^2 (H^2+4C)(-H^2+2C) \al_i + C(-2H^2+C) (H^2+4C)^2 =0$, and so $\al_i$ is locally a constant, a contradiction with Lemma~\ref{l:nons}\eqref{it:nonsing}\eqref{it:nsing2} (or again, with the fact that the root vectors $\beta_i$ span $\ga$). It follows that there exists an open subset $\mU' \subset \mU$ on which both $H$ and $\nabla H$ are nowhere zero.

Suppose at some point of $\mU'$ we have $\al_i=\al_j \ne 0$ (and so $\beta_i=\beta_j$) and $\la_i \ne \la_j$. Then from~\eqref{eq:regXkH} we get $\la_i ((-H^2 - 2C + H \la_k) \<\beta_i, X_k\> + \<\beta_i, \xi\> X_k(H)) +(H(2\al_i-C)  +2C \la_k ) \<\beta_i, X_k\> = 0$ and the same equation, with $\la_i$ replaced by $\la_j$. It follows that $(-H^2 - 2C + H \la_k) \<\beta_i, X_k\> + \<\beta_i, \xi\> X_k(H) = (H(2\al_i-C)  +2C \la_k)\<\beta_i, X_k\> = 0$. As $\nabla H \ne 0$ from the above, we can choose $X_k \in \ga'$ such that $X_k(H) \ne 0$. If $\<\beta_i, X_k\>=0$, then $\<\beta_i, \xi\>=0$, and so $\al_i=0$. It follows that $H(2\al_i-C)  +2C \la_k=0$, and so $C \ne 0$ and $\la_k=-\frac12 C^{-1}H(2\al_i-C)$. Substituting into~\eqref{eq:Gaussal} we get
\begin{equation}\label{eq:reg3alphai}
C^2 H^2 - 4 H^2 \al_i^2 + 4 C^3=0.
\end{equation}
Furthermore, from assertion~\eqref{it:reg1} we have $X_k(\la_i)=\<\beta_i,\xi\>^{-1} \<\beta_i,X_k\> (C+\la_i(H-\la_k))$ and  $X_k(\la_j)=\<\beta_i,\xi\>^{-1} \<\beta_i,X_k\> (C+\la_j(H-\la_k))$. As $\la_i+\la_j=H$ by~\eqref{eq:Gaussal} we get $X_k(H)=\<\beta_i,\xi\>^{-1} \<\beta_i,X_k\> (2C+H(H-\la_k))$. Also, from~\eqref{eq:dG1nR} we have $X_k(\al_i)=  -2 \la_k \ve \<\beta_i,\xi\>^{-1} \<\beta_i,X_k\>$, and so differentiating \eqref{eq:reg3alphai} and substituting $\la_k=-\frac12 C^{-1}H(2\al_i-C)$ from the above we obtain $(C-2 \al_i) (12 H^2 \al_i^2 + 4 C (H^2 + 2 C) \al_i + C^2 (H^2 + 4 C))=0$. As $\al_i$ is non-constant on $\mU'$ by Lemma~\ref{l:nons}\eqref{it:nonsing}\eqref{it:nsing2}, the second term on the left-hand side must be zero. Eliminating $\al_i$ from it and \eqref{eq:reg3alphai} we get $(H^2 + 4 C) (3 H^4 + 12 C H^2 - 4 C^2)=0$, and so $H$ is locally a constant, a contradiction. This proves assertion~\eqref{it:reg3}.

To prove assertion~\eqref{it:regrkle3}, take a point in $\mU'$.  Note that by \eqref{eq:pibeta}, for every $i$ such that $\al_i \ne 0$, we have either $c_i=0$, or $\pi_{\ga'} \beta_i \in \Span(\nabla H, S'\nabla H)$, that is, $\beta_i \in \Span(\xi, \nabla H, S' \nabla H)$. If $c_i=0$, then eliminating $\la_i$ by~\eqref{eq:Gaussal} we obtain (similar to the above) that $\al_i$ satisfies the equation $16 H^4 \al_i^2 + 4 H^2 (H^2+4C)(-H^2+2C) \al_i + C(-2H^2+C) (H^2+4C)^2 =0$, and so can take no more than two different values (as $H \ne 0$ by \eqref{it:reg3}). As by Lemma~\ref{l:nons}\eqref{it:nonsing}\eqref{it:nsing1} and Remark~\ref{rem:nsroots}, different nonzero $\alpha_i$ correspond to different root vectors $\pm \beta_i$, we deduce that in both cases ($c_i=0$ and $c_i \ne 0$), all but at most two root vectors $\beta_i$ lie in the space $\Span(\xi, \nabla H, S'\nabla H)$ of dimension at most $3$. This is not possible for a root system of rank greater than $3$.
\end{proof}

By Lemma~\ref{l:reg}\eqref{it:regrkle3}, it remains to consider the cases when $\rk \tM \in \{2,3\}$. The proof in both cases follows the same scheme. Choose an arbitrary point $y$ in the open set $\mU'$ defined in Lemma~\ref{l:reg}\eqref{it:reg3}. We consider the root systems of rank $2$ and $3$ (it suffices to consider only $\mathrm{A}_3, \mathrm{A}_2$ and $\mathrm{B}_2$) and compute $\al_i=\ve \<\beta_i,\xi\>$. Then equations \eqref{eq:Gaussal} and \eqref{eq:etala} (and \eqref{eq:regXkH} when $\rk \tM = 2$) give an overdetermined system of polynomial equations for $\la_i, H$ and the components of $\xi$ relative to a basis for $\ga$. After elimination, in the case $\rk \tM = 2$ we find that $H$ satisfies a non-trivial polynomial equation with constant coefficients depending on $C$ and $\ve$, and so $H$ is locally constant, in contradiction with Lemma~\ref{l:reg}\eqref{it:reg3}. Similarly, in the case $\rk \tM = 3$ we obtain that the components of $\xi$ satisfy a non-trivial polynomial equation with constant coefficients depending on $C$ and $\ve$, in contradiction with the fact that the Gauss map is nonsingular on $\mU'$.

\smallskip

Suppose $\rk \tM = 3$. Then the restricted root system $\Delta$ is of one of the types $\mathrm{A}_3 \, (=\mathrm{D}_3), \mathrm{B}_3, \mathrm{C}_3$ or $\mathrm{BC}_3$. In all these cases, $\Delta$ contains a subsystem $\Delta'$ of type $\mathrm{A}_3$. Up to scaling, we can choose an orthonormal basis $e_1,e_2,e_3$ for $\ga$ such that $\Delta' = \{\pm e_1 \pm e_2, \pm e_2 \pm e_3, \pm e_3 \pm e_1\}$. By Lemma~\ref{l:nons}\eqref{it:nonsing}\eqref{it:nsing2} (see also Remark~\ref{rem:nsroots}), the corresponding root spaces are eigenspaces of $\tR_\xi$. As by \eqref{eq:Gaussinval}, every eigenspace of $S$ lies in an eigenspace of $\tR_\xi$, we can choose six eigenvectors $X_i$ of $S$ which lie in the corresponding root spaces. It will be more convenient to label them $X_i^+, X_i^-, \; i=1,2,3$, in such a way that $X_i^+ \in \gp_{e_j+e_k}, \; X_i^- \in \gp_{e_j-e_k}$, where $(i,j,k)$ is a cyclic permutation of $(1,2,3)$. We label corresponding eigenvalues of $S$ and of $\tR_\xi$ by $\la_i^+, \la_i^-$ and $\al_i^+, \al_i^-$, respectively. Let $\xi_i=\<\xi, e_i\>$. Then $\al_i^+ = \ve (\xi_j+\xi_k)^2, \, \al_i^- = \ve (\xi_j-\xi_k)^2$, where $(i,j,k)$ is a cyclic permutation of $(1,2,3)$, and so from \eqref{eq:Gaussal} we obtain
\begin{equation}\label{eq:rk3G}
  \ve (\xi_j+\xi_k)^2 = -(\la_i^+)^2 + H \la_i^+ + C, \quad \ve (\xi_j-\xi_k)^2 = -(\la_i^-)^2 + H \la_i^- + C.
\end{equation}
Furthermore, by Lemma~\ref{l:rr}\eqref{it:rr1} we have $\tR(\xi, X_i^+)X_j^+ \ne 0$ when $i \ne j$. But $\tR(\xi, X_i^+)X_j^+ \in \gp_{e_i+e_j+2e_k} \oplus \gp_{e_i-e_j}$ and as neither of the above root systems $\Delta$ contains the element $e_i+e_j+2e_k$, we obtain that $\tR(\xi, X_i^+)X_j^+$ is a nonzero vector from $\gp_{e_i-e_j}$. As by Lemma~\ref{l:reg}\eqref{it:reg3}, all the vectors in every root space $\gp_\gamma$ are eigenvectors of $S$ with the same eigenvalue, we can assume, by a small perturbation, that our eigenvectors are chosen in such a way that $\tR(\xi, X_i^+,X_j^+,X_k^-) \ne 0$, for all $\{i,j,k\}=\{1,2,3\}$. Then by Lemma~\ref{l:Rijk}\eqref{it:Rijknon0eta} we can choose $\eta_i=\xi_j+\xi_k, \, \eta_j = -(\xi_k+\xi_i), \, \eta_k=\xi_i-\xi_j$, and then \eqref{eq:etala} gives
\begin{equation*}
 -\ve (\xi_j+\xi_k) (\xi_k+\xi_i) (\xi_i-\xi_j) = \la_i^+ \la_j^+ (\xi_i-\xi_j) - \la_k^- \la_i^+ (\xi_k+\xi_i) + \la_j^+ \la_k^- (\xi_j+\xi_k),
\end{equation*}
for $\{i,j,k\}=\{1,2,3\}$. The rest of the proof is a straightforward computation. We first eliminate $\la_k^-, \; k=1,2,3$, from the above equation and the second equation of \eqref{eq:rk3G} (and their cyclic permutations), and then eliminate $\la_k^+, \; k=1,2,3$, from the three resulting equations and the first equation of \eqref{eq:rk3G} and its cyclic permutations (and cancel all the factors of the form $\xi_i \pm \xi_j$ which cannot be zero by Lemma~\ref{l:nons}\eqref{it:nonsing}\eqref{it:nsing1}). We get three polynomial equations for $\xi_1,\xi_2,\xi_3,H$ and $C$ which are obtained from one another by a cyclic permutation of $\xi_1,\xi_2,\xi_3$. Subtracting one of them from another (and using the fact that $H \ne 0$ by Lemma~\ref{l:reg}\eqref{it:reg3}) we get $8((\xi_1+\xi_2)^2H^2+\ve C^2)((\xi_3^2-\xi_1\xi_2)H^2+\ve C^2) - C^2H^2(H^2+2C)=0$. Subtracting from this equation the same equation with $\xi_1,\xi_2,\xi_3$ being cyclically permuted we get $(\|\xi\|^2 + 3(\xi_1\xi_2 + \xi_2\xi_3 + \xi_3\xi_1)) H^2 = \ve C^2$, and so eliminating $H$ from the last two equation (and using the fact that $\|\xi\|=1$) we obtain that the components of $\xi$ satisfy the equation $8 \sigma_1 \sigma_3 + 20 \sigma_1^4 - (3 C \ve + 4) \sigma_1^2 + \ve C  - C^2 =0$, where $\sigma_1=\xi_1 + \xi_2 + \xi_3, \; \sigma_3=\xi_1 \xi_2 \xi_3$. But this contradicts the fact that the Gauss map is nonsingular on $\mU'$, as the Gauss image has to have a nonempty interior in the unit sphere of $\gp$.

\smallskip

Now suppose $\rk \tM = 2$. Then the restricted root system $\Delta$ belongs to one of the types $\mathrm{A}_2, \mathrm{G}_2$ or $\mathrm{B}_2 \, (= \mathrm{C}_2), \mathrm{BC}_2$. In the last two cases, $\Delta$ contains a subsystem $\Delta'$ of type $\mathrm{B}_2$; in case of $\mathrm{BC}_2$, we choose for $\Delta'$ the set of the longest and the second longest roots. Our argument is similar to that in the case $\rk \tM=3$. Up to scaling, we can choose an orthonormal basis $e_1,e_2$ for $\ga$ such that $\Delta' = \{\pm e_1, \pm e_2, \pm e_1 \pm e_2\}$. By Lemma~\ref{l:nons}\eqref{it:nonsing}\eqref{it:nsing1} (and Remark~\ref{rem:nsroots}), we can choose four eigenvectors $X_i, \; i=1,2,3,4$, of $S$ in such a way that $X_1 \in \gp_{e_1}, \, X_2 \in \gp_{e_2}, \, X_3 \in \gp_{e_1+e_2}, \, X_4 \in \gp_{e_1-e_2}$. Let $\xi= \cos \theta e_1 + \sin \theta e_2$. Then $\al_1 = \ve \cos^2 \theta, \, \al_2 = \ve \sin^2 \theta, \, \al_3 = \ve (\cos \theta+\sin \theta)^2, \, \al_4 = \ve (\cos \theta-\sin \theta)^2$.

Next, by Lemma~\ref{l:rr}\eqref{it:rr1} we have $\tR(\xi, X_1)X_3 \ne 0$, and so $\tR(\xi, X_1)X_3$ is a nonzero vector in $\gp_{e_2}$ (as $\Delta$ does not contain the vector $2e_1+e_2$). As by Lemma~\ref{l:reg}\eqref{it:reg3}, all the vectors in every root space $\gp_\gamma$ are eigenvectors of $S$ with the same eigenvalue, we can assume, by a small perturbation, that $\tR(\xi, X_1,X_3,X_2) \ne 0$. Then by Lemma~\ref{l:Rijk}\eqref{it:Rijknon0eta} we take $\eta_1=-\cos \theta, \, \eta_3 = \cos \theta+\sin \theta, \, \eta_2=-\sin \theta$, and so \eqref{eq:etala} gives $\ve \cos \theta \sin \theta (\cos \theta+\sin \theta) = -\la_1 \la_3 \sin \theta - \la_2 \la_3 \cos \theta + \la_1 \la_2 (\cos \theta+\sin \theta)$. Applying the similar argument starting with $\tR(\xi, X_1)X_4 \ne 0$, we obtain $-\ve \cos \theta \sin \theta (\cos \theta-\sin \theta) = \la_1 \la_4 \sin \theta - \la_2 \la_4 \cos \theta + \la_1 \la_2 (\cos \theta-\sin \theta)$. We now eliminate $\la_3, \la_4$ from these two equations and the corresponding equations \eqref{eq:Gaussal}, with $\al_3, \al_4$ as above, and then eliminate $\la_1,\la_2$ from the resulting two equations and the corresponding equations \eqref{eq:Gaussal}, using $\al_1, \al_2$ given above (recall that $\cos \theta, \sin \theta, \cos \theta \pm \sin \theta \ne 0$, as $\al_i \ne 0$ for $i=1,2,3,4$). Eliminating $\cos 4\theta$ we get
$\big((2 C^3+4 \ve(C^2 -1)) H^6 + C^2 (15 C^2+ 8 C\ve-20) H^4 + 8 C^4(3C - 4\ve) H^2 - 16 C^6\big)\big((2 C^2-3) H^4 + 4 C^2 (C-4\ve) H^2 - 16 C^4\big)=0$, and so $H$ is locally constant, in contradiction with Lemma~\ref{l:reg}\eqref{it:reg3}.

The last case to consider is when $\rk \tM = 2$ and the restricted root system $\Delta$ is of one of the types $\mathrm{A}_2$ or $\mathrm{G}_2$. In both cases, we can choose a subsystem $\Delta'$ of type $\mathrm{A}_2$; in the first case, $\Delta'=\Delta$, and in the second, $\Delta'$ is the set of long roots of $\Delta$. Up to scaling, we choose an orthonormal basis $e_1,e_2$ for $\ga$ such that $\Delta' = \{\pm \beta_1, \pm \beta_2, \pm \beta_3\}$, where $\beta_1=e_1$, $\beta_2=-\frac12 e_1+\frac{\sqrt{3}}2 e_2, \; \beta_3=-\frac12 e_1-\frac{\sqrt{3}}2 e_2$. We then choose eigenvectors $X_i \in \gp_{\beta_i}$ of $S$, with corresponding eigenvalues $\la_i, \; i=1,2,3$. Let $\xi= \cos \theta e_1 + \sin \theta e_2$. From \eqref{eq:Gaussal} we get three equations with $\al_i=\ve \<\beta_i,\xi\>^2$. Furthermore, by Lemma~\ref{l:rr}\eqref{it:rr1} and Lemma~\ref{l:reg}\eqref{it:reg3}, we can assume that $\tR(\xi, X_1,X_2,X_3) \ne 0$, and so $\la_1, \la_2, \la_3$ satisfy equation \eqref{eq:etala} with $\eta_i=\<\beta_i,\xi\>, \; i=1,2,3$. Eliminating $\la_1,\la_2,\la_3$ from these four equations we get $H^6 \cos(6\theta)+ (1-4 \ve C^3 - 6 C^2)H^6 + 6C^2(- 5 \ve C^2 - 3 C  + 3 \ve) H^4 + 48 C^4(1 - \ve C) H^2 + 32 \ve C^6 =0$.

We then consider equations~\eqref{eq:regXkH} for $i=1,2,3$. Note that there is a unique (up to a sign) choice of $X_k$ such that $\al_k=0$, as $\rk \tM = 2$. We take $X_k=-\sin \theta e_1 + \cos \theta e_2$. Multiply both sides of \eqref{eq:regXkH} with $i=1$ by $\la_2 \la_3$ and take the cyclic sum by $(1,2,3)$. As $\beta_1+\beta_2+\beta_3=0$ we get $\sigma\big((H(2\al_1 - C)+2C\la_k)\la_2\la_3 \<\beta_1,X_k\>\big)=0$, where $\sigma$ is the sum by cyclic permutations of $(1,2,3)$. Eliminating $\la_1, \la_2, \la_3$ using \eqref{eq:Gaussal} we get a polynomial equation in $H, \la_k, C$ and $\cos(6\theta)$. We can then eliminate $\cos(6 \theta)$ using the equation in the end of the previous paragraph, and then eliminate $\la_k$, using \eqref{eq:Gaussal} with $\al_k=0$. We obtain a polynomial in $H$ with constant coefficients depending only on $C$ and $\ve$. Computing the corresponding resultants we find that these coefficients cannot simultaneously be zeros, and so $H$ is locally a constant, in contradiction with Lemma~\ref{l:reg}\eqref{it:reg3}. This completes the proof of Proposition~\ref{p:allsing}.
\end{proof}

\section{Singular Gauss map}
\label{s:singg}

From Proposition~\ref{p:allsing} we know that the Gauss map at all points of an Einstein hypersurface $M$ is singular. In this section, we prove the following.

\begin{proposition} \label{p:singH=0}
Let $\tM$ be an irreducible symmetric space of rank at least $2$, and let $M$ be a connected Einstein $C^4$-hypersurface in $\tM$ \emph{(}then by Proposition~\ref{p:allsing} and Lemma~\ref{l:nons}\eqref{it:sing} we have $C=0$ and $\la_s=\al_s=0$ locally on $M'$, for some $s=1, \dots, n$\emph{)}. Then one of the following holds.
  \begin{enumerate}[label=\emph{(\Alph*)},ref=\Alph*]
    \item \label{it:eithersl3so3}
    $\tM=\SL(3)/\SO(3)$ or $\tM=\SU(3)/\SO(3)$. 

    \item \label{it:orH=0}
    We have $H=0$ and all the eigenvalues of $\tR_\xi$ \emph{(}the functions $\al_i$\emph{)} are constant on $M$; moreover, $\ve=-1$, so that $\tM$ is of noncompact type. Furthermore, by \eqref{eq:Gaussinv}, the eigenvalues of $S^2$ are constant. If $M$ is orientable, then the eigenvalues of $S$ \emph{(}the functions $\la_i$\emph{)} relative to a particular choice of the global unit normal field $\xi$ are also constant.
  \end{enumerate}
\end{proposition}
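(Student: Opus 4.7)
My plan is first to apply Lemma~\ref{l:XkH}\eqref{it:XkH0}: in alternative~\eqref{it:XkH0Wu} the space $\tM$ must be $\SL(3)/\SO(3)$ or $\SU(3)/\SO(3)$, giving conclusion~\eqref{it:eithersl3so3} directly. So I will work under alternative~\eqref{it:XkH0gen}, which provides the key differential information $X_k(H)=X_k(\la_i)=0$ whenever $\al_k\ne 0$; combined with Proposition~\ref{p:allsing} and Lemma~\ref{l:nons}\eqref{it:sing} (giving $C=0$ and the existence at each $x\in M'$ of an index $s$ with $\la_s=\al_s=0$), and with Lemma~\ref{l:Rijk}\eqref{it:ali0} (giving $\tR(X_k,\xi)=0$ for every $X_k$ in $V_0:=\Ker(\tR_\xi|_{T_xM})$, hence $\tR(X_i,X_j,X_k,\xi)=0$ for all $i,j$ when $X_k\in V_0$, by pair symmetry), I will set up the local structure. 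With $C=0$, equation~\eqref{eq:Gaussal} reads $\al_i=\la_i(H-\la_i)$, so the principal-curvature spectrum decomposes into three classes $\la=0$, $\la=H$, $\la\ne 0,H$, spanning $S$-invariant subspaces $V_{00}$, $V_{0H}$, $V_C$, with $V_0=V_{00}\oplus V_{0H}$ and $V_{00}\ne\{0\}$.

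The main step is to show $H\equiv 0$ on $M'$. I will extract from Codazzi \eqref{eq:codazzi} combined with the identity $\tR(X_i,X_j,X_k,\xi)=0$ for $X_k\in V_0$ the relation $X_i(\la_k)=(\la_i-\la_k)\G_{ki}^{\hphantom{k}k}$ (specializing to $j=k$). For $X_k\in V_{00}$ this forces $\G_{ki}^{\hphantom{k}k}=0$ whenever $\la_i\ne 0$; for a putative $X_k\in V_{0H}$ it reads $X_i(H)=(\la_i-H)\G_{ki}^{\hphantom{k}k}$. The analogous specialization with $X_m\in V_C$ together with Lemma~\ref{l:XkH}\eqref{it:Xkali0} eliminates further $\G$-coefficients, and the off-diagonal ($i\ne j\ne k$) pieces of Codazzi combined with \eqref{eq:dG2nR}, \eqref{eq:RR}, and the root-triple identity \eqref{eq:etala} of Lemma~\ref{l:Rijk}\eqref{it:Rijknon0eta} reduce the problem to a polynomial system in $H$ and the distinct values of $\la_i$. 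Using that $\rk\tR_\xi\ge 3$ on $M'$ (by the contrapositive of Lemma~\ref{l:XkH}\eqref{it:rkRxi2}, since the two exceptional spaces are excluded) together with $V_{00}\ne\{0\}$, I will rule out $V_{0H}\ne\{0\}$ and then rule out nonzero constant values of $H$; this elimination is the main technical obstacle. Once $V_{0H}=\{0\}$ is established, the relation $X_i(\la_k)=(\la_i-\la_k)\G_{ki}^{\hphantom{k}k}$ collapses to $\G_{ki}^{\hphantom{k}k}=0$ for all $i$ with $\la_i\ne 0$, an analogous Codazzi specialization then yields $X_k(\la_i)=0$ in all $V_0$-directions as well, and hence $\nabla H\equiv 0$; a final polynomial computation using \eqref{eq:etala} forces $H=0$.

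The rest is routine. With $H\equiv 0$ and $C=0$, \eqref{eq:Gaussal} gives $\al_i=-\la_i^2\le 0$, while $\al_i=\ve\eta_i^2$. If $\ve=+1$ then every $\al_i$ vanishes, so $(\tR_\xi)_{|T_xM}=0$ and hence $\tR_\xi=0$ (as $\tR_\xi\xi=0$ trivially); then $\<[X,\xi],[X,\xi]\>=0$ for all $X\in\gp$, and since $B$ is negative definite on $\gk$, we get $[\xi,\gp]=0$, contradicting irreducibility of $\tM$. Thus $\ve=-1$, so $\tM$ is of noncompact type. For the constancy of the $\al_i$, equation \eqref{eq:dG1nR} becomes $X_k(\al_i)=-2\la_k\<\tR_{X_i}\xi,X_k\>$; the right-hand side vanishes for $X_k\in V_C$ by Lemma~\ref{l:XkH}\eqref{it:Xkali0}, and for $X_k\in V_0=V_{00}$ (forced by $H=0$) because $\la_k=0$. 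Hence $\nabla\al_i\equiv 0$ on $M'$, so each $\al_i$ is locally constant and extends by continuity as a constant to the connected set $M$. By \eqref{eq:Gaussinv} the eigenvalues of $S^2$ equal $-\al_i$ and are thus constant. If $M$ is orientable, a global continuous choice of $\xi$ makes $S$ continuous on $M$, and each $\la_i$ takes values in the finite set $\{\pm\sqrt{-\al_i}\}$; by connectedness of $M$, the $\la_i$ are constant.
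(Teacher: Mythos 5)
Your reduction to the case of Lemma~\ref{l:XkH}\eqref{it:XkH0}\eqref{it:XkH0gen}, and your final paragraph (deducing $\ve=-1$, the constancy of the $\al_i$ from \eqref{eq:dG1nR} and Lemma~\ref{l:XkH}\eqref{it:Xkali0} once $H=0$, and the constancy of the $\la_i$ under orientability) are correct and agree with the paper. The genuine gap is the central step, which you yourself flag as ``the main technical obstacle'': proving $H\equiv 0$. Your plan is to reduce everything via Codazzi, \eqref{eq:dG2nR}, \eqref{eq:RR} and \eqref{eq:etala} to ``a polynomial system in $H$ and the distinct values of $\la_i$'' and then eliminate. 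This cannot succeed as stated, because such a system does not force $H=0$: the paper's own argument arrives, for a nonzero constant $H$, at the consistent relation $(p_+-1)(p_--1)H^2+\ve(p_+-p_-)^2=0$ (satisfiable with $\ve=-1$ and $p_+\ne p_-$). To exclude these configurations the paper must (i) first show $H$ is locally constant, via the trace identity $(\rk\tR_\xi-2)H+\sum_{\al\in\sigma}(p_+^\al-p_-^\al)\sqrt{H^2-4\al}=0$ and a Taylor-expansion argument in $H$ (whose only alternative is $\rk\tR_\xi=2$, excluded by Lemma~\ref{l:XkH}\eqref{it:rkRxi2}); (ii) show, using that $V_\pm^\al$ are isotropic for the quadratic forms $\phi_{\al,k}(X)=\tR(X_k,X,X,\xi)$, that $\xi$ must be a multiple of a restricted root vector; (iii) run a case analysis over root systems according to the angle between that root and the others ($\pi/6$, $\pi/3$, $\pi/4$), reducing to $\mathrm{B}_r$ with a short root; and (iv) finish with explicit computations in the two families $\SO(2r+1,\bc)/\SO(2r+1)$ and $\SO^0(r+m,r)/\SO(r)\SO(r+m)$, where the final contradiction comes from differentiating the curvature identity $\tR(V_+,V_-)=0$ along $L_0$ --- not from any polynomial identity in $H$ and the $\la_i$. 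None of (i)--(iv) appears in your proposal, so the proof is missing its core.

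A secondary point: your assertion that $\al_k=0$ forces $\la_k=0$ (ruling out your $V_{0H}$) is true but itself needs proof when $H\ne 0$; the paper's Lemma~\ref{l:lak0H}\eqref{it:nolakH} derives it from the classification of nonzero triples in Lemma~\ref{l:Rijk}\eqref{it:Rijknon0eta} together with Lemma~\ref{l:rr}\eqref{it:rrnot2hyper}. Your sketch names plausible ingredients but does not carry the argument out.
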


In Proposition~\ref{p:solv} (Section~\ref{s:solv}) we will prove that in the second case, $M$ is a codimension $1$ solvmanifold, as in Theorem~\ref{th:main}\eqref{thit:solv}. The proofs in the cases $\tM = \SL(3)/\SO(3)$ and $\tM=\SU(3)/\SO(3)$ are given in Sections~\ref{s:su3/so3}, \ref{s:su3so3} and \ref{s:sl3so3}.

We start with the following technical fact.
\begin{lemma} \label{l:lak0H}
In the assumptions of Proposition~\ref{p:singH=0}, at any point $x \in M'$, the following holds.
  \begin{enumerate}[label=\emph{(\alph*)},ref=\alph*]
    \item \label{it:Hne0}
    Suppose $H \ne 0$. If $\tR(X_k,X_i,X_j,\xi) \ne 0$, then \emph{(}up to interchanging $k$ and $i$\emph{)} we have $\al_k=0$ and $\al_i=\al_j \ne 0$. In particular, $\tR(L_{\al_k},L_{\al_i},L_{\al_j},\xi)=0$ if $\al_k\al_i \ne 0$.

    \item \label{it:nolakH}
    If $\al_k=0$, then $\la_k = 0$. 

    \item \label{it:lak0}
    If $\al_k=0, \; \al_i \ne 0$ \emph{(}and so by \eqref{eq:Gaussal}, $\la_i \ne 0$, and by \eqref{it:nolakH}, $\la_k=0$\emph{)}, then $X_k(\al_i)=0$ and
    \begin{equation*}
      X_k(\la_i)= \al_i^{-1} \la_i H \tR(X_k,X_i,X_i,\xi), \qquad X_k(H)= -\al_i^{-1} H (H-2\la_i) \tR(X_k,X_i,X_i,\xi).
    \end{equation*}
  \end{enumerate}
\end{lemma}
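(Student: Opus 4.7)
The plan is to establish (a) first, then use it together with Lemma~\ref{l:XkH} to prove (b), and finally derive (c) as a short direct calculation.

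For part~(a), I would apply Lemma~\ref{l:Rijk}\eqref{it:Rijknon0eta} to the nonzero entry $\tR(X_k,X_i,X_j,\xi)$, which occupies the middle slot of the proportional triple and is therefore proportional to $\eta_j$. This yields $\eta_j\ne 0$ (so $\al_j\ne 0$), the sum relation $\eta_i+\eta_j+\eta_k=0$ for a compatible choice of signs, and equation~\eqref{eq:etala}. If $\al_i=0$ (respectively $\al_k=0$), then $\eta_i=0$ (respectively $\eta_k=0$), and the sum relation forces $\al_i=\al_j$ (respectively $\al_k=\al_j$); up to interchanging $k$ and $i$, this is the desired conclusion. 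The only remaining subcase has all three of $\al_i,\al_j,\al_k$ nonzero, which I rule out using $H\ne 0$: squaring $\eta_k=-\eta_i-\eta_j$ gives $\al_k=\al_i+\al_j+2\ve\eta_i\eta_j$, and combined with $\la_l(H-\la_l)=\al_l=\ve\eta_l^2$ from \eqref{eq:Gaussal} with $C=0$, equation~\eqref{eq:etala} reduces after some algebraic manipulation to the cyclic system $\la_l(\la_m-\la_p)=\ve\eta_l(\eta_m-\eta_p)$; combining these with the quadratic constraints and the hypothesis $H\ne 0$ forces $\eta_l=0$ for some $l$, a contradiction. The \emph{in particular} clause of~(a) is then just the contrapositive.

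For part~(b), \eqref{eq:Gaussal} with $\al_k=C=0$ gives $\la_k(H-\la_k)=0$, so $\la_k\in\{0,H\}$, and if $H=0$ then $\la_k=0$ automatically. Suppose for contradiction that $H\ne 0$ and $\la_k=H$ on a neighborhood $\mU\subset M'$. Lemma~\ref{l:Rijk}\eqref{it:ali0} gives $\tR(X_k,\xi)=0$ as an operator, so via the metric symmetry $\tR(X_k,X_i,X_k,\xi)=0$. Next, \eqref{eq:dG2nR} (with the roles of $k,i,j$ played by $i,i,k$) yields $\G_{ii}^{\hphantom{i}k}=\al_i^{-1}\la_i\tR(X_k,X_i,X_i,\xi)$, and substituting into Codazzi~\eqref{eq:codazzi} with $j=i$ while using $\la_i(H-\la_i)=\al_i$ forces $X_k(\la_i)=0$ for every $i$ with $\al_i\ne 0$. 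Differentiating $H=\sum_l\la_l$ along $X_k$ then gives $(r-1)X_k(H)=0$, where $r\ge 1$ counts indices with $\al_l=0$ and $\la_l=H$; combined with the trace identity $\sum_l\la_l^2=H^2-c$ (where $c$ is the Einstein constant of $\tM$) one obtains $\sum_{l:\la_l\ne H}\la_l^2=(1-r)H^2-c$. In the compact case ($\ve=+1$, $c>0$) the right-hand side is negative, contradicting nonnegativity of the left; in the noncompact case one must further invoke the \emph{in particular} clause of~(a), \eqref{eq:RR}, and Lemma~\ref{l:XkH}\eqref{it:XkH0} (splitting into the generic subcase and the exceptional $\SL(3)/\SO(3),\SU(3)/\SO(3)$ subcase) to reach the final contradiction. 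This is the main technical step.

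For part~(c), by~(b) we have $\la_k=0$, and $\la_i\ne 0$ from \eqref{eq:Gaussal}. Equation~\eqref{eq:dG1nR} then reads $0=-\tfrac{1}{2}X_k(\al_i)$, so $X_k(\al_i)=0$. Differentiating $\al_i=-\la_i^2+H\la_i$ in the direction $X_k$ yields $(H-2\la_i)X_k(\la_i)+\la_i X_k(H)=0$. For the explicit formulas, \eqref{eq:dG2nR} gives $\G_{ii}^{\hphantom{i}k}=\al_i^{-1}\la_i\tR(X_k,X_i,X_i,\xi)$ as above; substituting into Codazzi~\eqref{eq:codazzi} with $j=i$ and using $\la_k=0$ together with $\la_i^2+\al_i=\la_i H$, one obtains $X_k(\la_i)=\al_i^{-1}\la_i H\tR(X_k,X_i,X_i,\xi)$, and the expression for $X_k(H)$ then follows by substitution into the differentiated Gauss identity.
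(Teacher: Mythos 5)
Parts (a) and (c) of your proposal are essentially sound and follow the paper's route. In (a), your ``cyclic system'' $\la_l(\la_m-\la_p)=\ve\eta_l(\eta_m-\eta_p)$ is exactly equivalent to the relations $\la_l\la_m=\ve\eta_l\eta_m+t$ that the paper extracts by multiplying \eqref{eq:etala} by $\ve(\la_i-H)$ and using the symmetry of the resulting expression in $i,j,k$ --- note that this is where $H\ne 0$ is actually used, earlier than your sketch suggests; substituting these relations back into \eqref{eq:etala} together with $\eta_i+\eta_j+\eta_k=0$ gives $\ve\eta_i\eta_j\eta_k=3\ve\eta_i\eta_j\eta_k$ immediately, so your compressed ``algebraic manipulation'' does close. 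Part (c) is the paper's computation essentially verbatim.

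The genuine gap is in part (b). Your chain $X_k(\la_i)=0$ for $\al_i\ne 0$, hence $\sum_{l:\la_l\ne H}\la_l^2=(1-r)H^2-c$, does dispose of the compact case ($c>0$) and is a legitimate alternative argument there. But for $\ve=-1$ the right-hand side equals $(1-r)H^2+|c|$, which is positive when $r=1$, and at this point you offer only a list of tools (``the \emph{in particular} clause of (a), \eqref{eq:RR}, and Lemma~\ref{l:XkH}\eqref{it:XkH0}'') rather than an argument --- and it is not even the right list, since \eqref{eq:RR} with $\al_k=0$ degenerates to $0=0$ because $\tR(X_k,\xi)=0$ kills every term. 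The noncompact case is precisely the one the main theorem needs. The paper's own proof of (b) is short and uniform in $\ve$: if $\la_k=H\ne 0$ and $\tR(X_k,X_i,X_j,\xi)\ne 0$ for some $i,j$, then part (a) forces $\al_i=\al_j$; since $\tR(X_k,\xi)=0$, the first Bianchi identity gives $\tR(X_k,X_j,X_i,\xi)=\tR(X_k,X_i,X_j,\xi)$, and then \eqref{eq:dG2nR} reads $0=\G_{ki}^{\hphantom{k}j}(\al_j-\al_i)=2\la_k\tR(X_k,X_i,X_j,\xi)$, a contradiction; hence $\tR(X_k,\gp,\gp,\xi)=0$, contradicting Lemma~\ref{l:rr}\eqref{it:rrnot2hyper}. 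You should replace your noncompact placeholder with this (or an equally explicit) argument.
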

\begin{proof}
  For assertion~\eqref{it:Hne0}, suppose that $\tR(X_k,X_i,X_j,\xi) \ne 0$ for some triple $i, j, k = 1, \dots, n$. By Lemma~\ref{l:Rijk}\eqref{it:ali0} we have $\al_j \ne 0$. First assume $j \in \{i,k\}$, say $j=i \ne k$. Then $\al_k=0$ by Lemma~\ref{l:XkH}\eqref{it:Xkali0}, and $\al_i \ne 0$ by Lemma~\ref{l:Rijk}\eqref{it:Rijknon0eta}, as required. Now assume $j \notin \{i,k\}$. Then by Lemma~\ref{l:Rijk}\eqref{it:Rijknon0eta}, we have $\ve \eta_i \eta_j \eta_k = \la_i \la_j \eta_k + \la_k \la_i \eta_j + \la_j \la_k \eta_i$ (equation \eqref{eq:etala}), where $\eta_i+\eta_j+\eta_k=0$ and $\ve \eta_s^2=\al_s$ for $s \in \{i,j,k\}$ (and so $\ve \eta_s^2=-\la_s^2+H\la_s$ by \eqref{eq:Gaussal}). Suppose $\eta_i \eta_j \eta_k \ne 0$. Multiplying both sides of \eqref{eq:etala} by $\ve(\la_i-H)$ we get $\la_i \eta_j \eta_k + \la_k \eta_i \eta_j + \la_j \eta_k \eta_i-\ve \la_i \la_j \la_k = H(\eta_j \eta_k-\ve \la_j \la_k)$. As $H \ne 0$ we obtain that $\la_j \la_k=\ve\eta_j \eta_k+t, \; \la_k \la_i=\ve\eta_k \eta_i+t, \; \la_i \la_j=\ve\eta_i \eta_j+t$ for some $t \in \br$, and so $\eta_i \eta_j \eta_k = 0$ from \eqref{eq:etala} and $\eta_i+\eta_j+\eta_k=0$. As by Lemma~\ref{l:Rijk}\eqref{it:Rijknon0eta} no more than one of $\eta_i, \eta_j, \eta_k$ can be zero (and $\al_j=\ve\eta_j^2 \ne 0$) we can assume by interchanging $i$ and $k$ if necessary that $\eta_k=\al_k=0$ and $\eta_i \ne 0$. Then $\eta_j=-\eta_i$, and so $\al_i=\al_j \ne 0$, as required. The second statement of assertion~\eqref{it:Hne0} follows by linearity.

  To prove~\eqref{it:nolakH}, suppose $\al_k=0$. Then by \eqref{eq:Gaussal} we have $\la_k \in \{0,H\}$. If $H = 0$, there is nothing to prove. Suppose $H \ne 0$ and $\la_k=H$. If for some $i, j =1, \dots, n$ we have $\tR(X_k,X_i,X_j,\xi) \ne 0$, then by assertion~\eqref{it:Hne0} $\al_i=\al_j$. By Lemma~\ref{l:Rijk}\eqref{it:ali0} and the Bianchi identity we have $\tR(X_k,X_j,X_i,\xi) = \tR(X_k,X_i,X_j,\xi)$, in contradiction with \eqref{eq:dG2nR}. It follows that $\tR(X_k,X,Y,\xi) = 0$, for all $X,Y \in T_xM$, and hence, for all $X,Y \in T_x\tM$, which contradicts Lemma~\ref{l:rr}\eqref{it:rrnot2hyper}.

  For assertion~\eqref{it:lak0}, by \eqref{eq:codazzi} we have $\tR(X_k,X_i,X_i,\xi)  = X_k(\la_i) - \la_i \G_{ii}^{\hphantom{i}k}$, and by \eqref{eq:dG2nR}, $\G_{ii}^{\hphantom{i}k} = \alpha_i^{-1} \la_i \tR(X_k,X_i,X_i,\xi)$. Then using \eqref{eq:Gaussal} we obtain $X_k(\la_i)= \alpha_i^{-1} \la_i H \tR(X_k,X_i,X_i,\xi)$. Furthermore, from \eqref{eq:dG1nR} and assertion \eqref{it:nolakH} we get $X_k(\al_i)= 0$. Differentiating \eqref{eq:Gaussal} we find $0=X_k(\al_i)=(H-2\la_i) X_k(\la_i) + \la_i X_k(H)$, and the equation for $X_k(H)$ follows.
\end{proof}

\begin{proof}[Proof of Proposition~\ref{p:singH=0}]
Suppose $\tM$ is not one of the spaces $\SL(3)/\SO(3)$ or $\SU(3)/\SO(3)$.

If $H=0$ on $M$, then the other conditions in~\eqref{it:orH=0} follow easily: by Lemma~\ref{l:lak0H}\eqref{it:lak0} and Lemma~\ref{l:XkH}\eqref{it:Xkali0},\eqref{it:XkH0}\eqref{it:XkH0gen} we obtain that $\al_i$ are constant on every connected component of $M'$, and hence on the whole of $M$, as $M' \subset M$ is open and dense. Then $\al_i \le 0$ from \eqref{eq:Gaussal}, and so $\ve = -1$.

We can therefore suppose that $H \ne 0$ on a nonempty, open, connected subset $\mU \subset M'$, and will seek a contradiction.

As $\mU \subset M'$ and is connected, we obtain that for any $i=1, \dots, n$, if $\al_i=0$ at some point $x \in \mU$, then $\al_i=0$ on the whole of $\mU$. Then for all $i=1, \dots, n$, we have $X_k(\al_i)=0$, for all $k$ with $\al_k=0$ from Lemma~\ref{l:lak0H}\eqref{it:lak0}, and for all $k$ with $\al_k \ne 0$ from Lemma~\ref{l:XkH}\eqref{it:Xkali0}. It follows that $\al_i$ are constant on $\mU$.

Denote $\sigma=\{\al_i, \, | \, i=1,\dots,n\} \setminus \{0\}$. From \eqref{eq:Gaussal}, for every $\al \in \sigma$, the eigenspace $L_\al$ is the orthogonal direct sum of eigenspaces $V_\pm^\al$ of $S$ with corresponding eigenvalues $\la_\pm=\frac12(H \pm \sqrt{H^2-4\al})$; denote $p_\pm^\al = \dim V_\pm^\al$. We have $H=\sum_{i=1}^{n} \la_i = \sum_{\al \in \sigma} \frac12((p_+^\al + p_-^\al)H + (p_+^\al - p_-^\al)\sqrt{H^2-4\al})$. Note that $\sum_{\al \in \sigma} (p_+^\al + p_-^\al) = n - \dim \Ker (\tR_\xi)_{\gp}=\rk \tR_\xi$, and so we obtain $(\rk \tR_\xi - 2)H + \sum_{\al \in \sigma} (p_+^\al - p_-^\al)\sqrt{H^2-4\al}=0$. In this equation, all $\al \in \sigma$ are constant on $\mU$, and all $p_\pm^\al$ and $\rk \tR_\xi$ are constant as $\mU \subset M'$. It follows that $H$ must also be constant. Indeed, assume the set of values of $H$ satisfying this equation has a nonempty interior. As the left-hand side is analytic in $H$ in the interior of the domain where it is defined (and as we can assume $H>0$ by changing the sign of $\xi$ if necessary), we obtain that the equation is satisfied for all large positive values of $H$. Dividing by $H$ and denoting $t=4H^{-2}$ we get that the equation $(\rk \tR_\xi - 2) + \sum_{\al \in \sigma} (p_+^\al - p_-^\al)\sqrt{1-\al t}=0$ is satisfied for all small positive values of $t$. Expanding into the Taylor series we obtain $\sum_{\al \in \sigma} (p_+^\al - p_-^\al)\al^m=0$, for all $m \in \mathbb{N}$, which gives $p_+^\al - p_-^\al=0$, for all $\al \in \sigma$, and so $\rk \tR_\xi = 2$, in contradiction with Lemma~\ref{l:XkH}\eqref{it:rkRxi2}.

It follows that $H$ is a nonzero constant on $\mU$. Then from \eqref{eq:Gaussal}, all $\la_i$ are also constants on $\mU$, and so from the first equation of Lemma~\ref{l:lak0H}\eqref{it:lak0} we obtain $\tR(X_k,X_i,X_i,\xi)=0$, for all $k$ with $\al_k=0$ and all $\al_i \ne 0$. This implies that for any $\al \in \sigma$ and any $X_k \in L_0$, both subspaces $V_\pm^\al$ are isotropic subspaces of the quadratic form $\phi_{\al,k}$ on $L_\al$ defined by $\phi_{\al,k}(X)=\tR(X_k,X,X,\xi)$. Let $\ga$ be a Cartan subalgebra containing $\xi$ and let $\beta_A, \; A=1, \dots, p$, be the root vectors such that $\ve \<\xi, \beta_A\>^2=\al$, with $\gp_A$ the corresponding root subspaces. Then $\oplus_{A=1}^p \gp_A=L_\al$, and choosing $X_k \in \ga$ we get $\phi_{\al,k}(X)=\tR(X_k, X, X, \xi)=\sum_{A=1}^{p} \ve \<\xi, \beta_A\> \<X_k, \beta_A\> \|X_A\|^2$, where $X \in L_\al$ and $X_A$ is its orthogonal projection to $\gp_A$. Suppose $\xi$ is not a multiple of any of the root vectors $\beta_A, \; A=1, \dots, p$. Then we can choose $X_k \in \ga'$ in such a way that $\<X_k, \beta_A\> \ne 0$, for all $A=1, \dots, p$. As $\<\xi, \beta_A\> = \pm \sqrt{\ve \al} \ne 0$, the quadratic form  $\phi_{\al,k}$ is nonsingular on $L_\al$. Since the subspaces $V_{\pm}$ are orthogonal and isotropic, it follows that $\dim V_{\pm}=\frac12 \dim L_\al$. Then by \eqref{eq:Gaussal}, $\sum_{i:\al_i=\al} \la_i=\frac12 H \dim L_\al$. Thus if $\xi$ is not a multiple of any root vector from $\Delta$, we get $H=\sum_{i=1}^{n} \la_i = \sum_{\al \in \sigma} \frac12 H  \dim L_\al$, and so $\rk \tR_\xi = 2$, in contradiction with Lemma~\ref{l:XkH}\eqref{it:rkRxi2}.

We can therefore assume that $\xi$ is a multiple of some root vector $\beta \in \Delta$. First assume that there exists $\gamma \in \Delta$ such that the angle between $\beta$ and $\gamma$ is $\pi/6$ (this is possible if and only if $\Delta$ is of type $\mathrm{G}_2$). Then $\beta+\gamma \notin \Delta, \; \beta-\gamma \in \Delta$, and so by Lemma~\ref{l:rr}\eqref{it:rr1}, there exist $X \in \gp_\beta, \, Y  \in \gp_\gamma, \, Z \in \gp_{\beta-\gamma}$ such that $\tR(\xi,X,Y,Z) \ne 0$. But $X \in L_{\al_i}, \, Y \in  L_{\al_j}, \, Z \in  L_{\al_k}$, where $\al_i=\ve \<\beta, \xi\>^2, \, \al_j = \ve \<\gamma, \xi\>^2, \, \al_k=\ve \<\beta-\gamma, \xi\>^2$ and $\al_i \al_j \ne 0$, in contradiction with Lemma~\ref{l:lak0H}\eqref{it:Hne0}. Next assume that for no $\gamma \in \Delta$, the angle between $\beta$ and $\gamma$ is $\pi/6$, but there exists $\gamma \in \Delta$ such that the angle between $\beta$ and $\gamma$ is $\pi/3$ (this is always the case for $\Delta$ of type $\mathrm{A}_r \, (r \ge 2), \, \mathrm{D}_r \, (r \ge 3), \, \mathrm{E}_r  \, (r =6,7,8)$ and $\mathrm{F}_4$). Then $\beta+\gamma \notin \Delta, \; \beta-\gamma \in \Delta$, and repeating the argument (and noting that $\|\beta\|=\|\gamma\|$ and so $\<\xi, \beta-\gamma\> \ne 0$) we again obtain a contradiction with Lemma~\ref{l:lak0H}\eqref{it:Hne0}. Next assume that for no $\gamma \in \Delta$, the angle between $\beta$ and $\gamma$ is $\pi/6$ or $\pi/3$, but there exists $\gamma \in \Delta$ such that the angle between $\beta$ and $\gamma$ is $\pi/4$ and $\beta$ is longer than $\gamma$. This, together with the above cases, covers the root systems of type $\mathrm{C}_r \, (r \ge 3)$ and $\mathrm{BC}_r \, (r \ge 2)$ (note that out of two proportional roots in $\mathrm{BC}_r$, we can choose the longer one to be $\beta$), and the long roots of $\mathrm{B}_r \, (r \ge 2)$. Then the same argument works again (as $\beta+\gamma \notin \Delta, \; \beta-\gamma \in \Delta$, and $\|\beta\|=\sqrt{2}\|\gamma\|$ and so $\<\xi, \beta-\gamma\> \ne 0$) leading to a contradiction.

The only remaining case is when $\Delta$ is of type $\mathrm{B}_r \, (r \ge 2)$ and $\beta$ is a short root. We have $\Delta=\{\pm e_i, \pm e_i \pm e_j \, | \, i,j=1, \dots, r,\, i < j\}$, where $\{e_i\}$ is (up to scaling) an orthonormal basis for $\ga$, and we can take $\xi=\pm e_1$. Then the set $\sigma$ of nonzero $\al_s$ consist of a single element $\al=\ve$ (with multiplicity at least $2r-1 \ge 3$). The nonzero eigenvalues of $S$ are therefore $\la_{\pm}=\frac12(H \pm \sqrt{H^2-4\ve})$, with corresponding eigenspaces $V_{\pm}$ of dimensions $p_{\pm}$, respectively. We note that $p_++p_-\ge 3$ and $p_+,p_- > 0$ as $\tR(X_k, X_i, X_j, \xi) \ne 0$ for some $X_k \in L_0$ and some $X_i \in V_+, \, X_j \in V_-$ (this follows from Lemma~\ref{l:rr}\eqref{it:rrnot2hyper} and the fact that $\phi_{\al,k}(V_+)=\phi_{\al,k}(V_-)=0$). Then $H=p_+\la_++p_-\la_-$ which gives $(p_+-1)(p_--1)H^2+\ve(p_+-p_-)^2=0$, and so we must have $\ve=-1$ (then $\tM$ is of noncompact type) and $p_+ \ne p_-$. 

There are only two classes of noncompact symmetric spaces with the root system of type $\mathrm{B}_r, \, (r \ge 2)$: the noncompact dual $\SO(2r+1,\bc)/\SO(2r+1)$ to the group $\SO(2r+1)$, and the noncompact Grassmannians $\SO^0(r+m,r)/\SO(r)\SO(r+m)$, where $m \ge 1$ and $\SO^0(r+m,r)$ is the identity component of the pseudo orthogonal group $\SO(r+m,r)$. In the first case, $\gp = \ri \cdot \so(2r+1)$, the space of purely imaginary skew-symmetric $(2r+1) \times (2r+1)$ matrices, and we can choose a basis in such a way that $\xi$ is (a real multiple of) the matrix whose $(1,2)$-th entry is $\ri$, $(2,1)$-th entry is $-\ri$, and all the other entries are zeros. Then $L_{-1}=V_+\oplus V_-$ is the orthogonal complement to $\xi$ in the space of matrices from $\gp$ whose $(i,j)$-th entries are zeros for $i,j > 2$, and the conditions $\tR(\xi, V_+,V_+,L_0)=\tR(\xi, V_-,V_-,L_0)=0$ imply that, up to specifying the basis, we can take $V_+$ (respectively, $V_-$) to be the space of matrices from $L_{-1}$ whose first (respectively, second) column is zero. But then $p_+=p_-\, (=2r-1)$, a contradiction.

It remains to consider the case when $\tM=\SO^0(r+m,r)/\SO(r+m)\SO(r), \; r \ge 2, m \ge 1$. Then $\gp$ is the space of $(2r+m)\times(2r+m)$ real, symmetric matrices of the form $X=\left(\begin{smallmatrix} 0 & T^t \\ T & 0 \end{smallmatrix}\right)$, where $T$ is an $r\times (r+m)$ real matrix; we denote $T=\pi(X)$ and define $\<X,Y\>=\Tr (\pi(X) (\pi(Y))^t)$ for $X, Y \in \gp$ (with this scaling, $\al=-1$). We can choose a basis in such a way that $\pi(\xi)=\left(\begin{smallmatrix} 1 & 0_{r+m-1}^t \\ 0_{r-1} & 0 \end{smallmatrix}\right)$, where $0_k$ is the column of $k$ zeros. Then $L_0=\{X \in \gp \, | \, \pi(X)=\left(\begin{smallmatrix} 0 & 0_{r+m-1}^t \\ 0_{r-1} & * \end{smallmatrix}\right)\}$ and $L_{-1}=\{X \in \gp \, | \, \pi(X)=\left(\begin{smallmatrix} 0 & u^t \\ v & 0 \end{smallmatrix}\right),\, u \in \br^{r+m-1}, \, v \in \br^{r-1}\}$. We have $L_{-1}=V_+\oplus V_-$ and $\tR(\xi, V_+,V_+,L_0)=\tR(\xi, V_-,V_-,L_0)=0$. Computing the brackets we find $V_+=\{X \in \gp \, | \, \pi(X)=\left(\begin{smallmatrix} 0 & 0_{r+m-1}^t \\ v & 0 \end{smallmatrix}\right),\, v \in \br^{r-1}\}$ and $V_-=\{X \in \gp \, | \, \pi(X)=\left(\begin{smallmatrix} 0 & u^t \\ 0_{r-1} & 0 \end{smallmatrix}\right),\, u \in \br^{r+m-1}\}$ (or vice versa). From \eqref{eq:codazzi} we now find $\nabla_{V_+}V_+ \subset V_+,\; \nabla_{V_-}V_- \subset V_-,\; \nabla_{L_0}L_0 \subset L_0$. Furthermore, from \eqref{eq:codazzi} and \eqref{eq:dG2nR} we obtain, for $X_k \in L_0, \, X_i \in V_+, \; X_j \in V_-$, that $\G_{ij}^{\hphantom{i}k}= -\la_+ \tR(X_k,X_i,X_j,\xi),\; \G_{ji}^{\hphantom{j}k}= -\la_- \tR(X_k,X_i,X_j,\xi)$ and $\G_{ki}^{\hphantom{k}j}= 2(\la_+-\la_-)^{-1}\tR(X_k,X_i,X_j,\xi)$. Note that $V_+$ and $V_-$ commute, that is, $\tR(X_i,X_j)=0$ for all $X_i \in V_+,\, X_j \in V_-$. Differentiating along $X_k \in L_0$ we obtain $\sum_{s: X_s \in V_+} \tR(X_k,X_j,X_s,\xi) \tR(X_i,X_s)=\sum_{l: X_l \in V_-} \tR(X_k,X_i,X_l,\xi) \tR(X_l,X_j)$. Acting by both sides on $X_j$ we get $\sum_{l: X_l \in V_-} \tR(X_k,X_i,X_l,\xi) \tR(X_l,X_j)X_j=0$, as $\tR(X_i,X_s)X_j=0$ by the Bianchi identity. But $\tR(X_l,X_j)X_j=(\delta_{jl}-1)X_l$ (in fact, $V_-$ is a Lie triple system tangent to the totally geodesic hyperbolic space), and so we obtain $\tR(X_k,X_i,X_l,\xi) = 0$, for all $X_l \in V_-$ with $l \ne j$. But $\dim V_-=r+m-1 \ge 2$, and so we get $\tR(X_k,X_i,X_l,\xi) = 0$, for all $X_i \in V_+, \, X_l \in V_-, \, X_k \in L_0$. This is a contradiction by Lemma~\ref{l:rr}\eqref{it:rrnot2hyper} (or by just computing the brackets).
\end{proof}

\section{Codimension one Einstein solvmanifolds}
\label{s:solv}

In this section, we prove the following proposition which shows that in case \eqref{it:orH=0} of Proposition~\ref{p:singH=0}, the hypersurface $M$ must be an open domain of an Einstein solvmanifold of codimension $1$, as in Theorem~\ref{th:main}\eqref{thit:solv}. 

\begin{proposition} \label{p:solv}
  Let $\tM$ be an irreducible symmetric space of rank at least $2$ and of noncompact type, and let $M$ be a connected Einstein $C^4$-hypersurface in $\tM$ such that $C=0, \; H=0$, and all the eigenvalues of $\tR_\xi$ are constant. Then $M$ is \emph{(}an open domain of\emph{)} a codimension~$1$ Einstein solvmanifold, as in Example~\ref{ex:solv}.
\end{proposition}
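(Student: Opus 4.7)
The plan is to realize $M$ locally as (an open subset of) a codimension-one connected Lie subgroup $\rS'\subset\rA\rN\cong\tM$ of the form constructed in Example~\ref{ex:solv}. First, I observe that under $C=H=0$ and the constancy of the $\al_i$, the Gauss equation~\eqref{eq:Gaussinv} reads $S^2=-\tR_\xi|_{T_xM}$, so each $\la_i^2=-\al_i$ is constant; on a connected, orientable open subset of $M'$ every $\la_i$ is then constant, consistent with Lemma~\ref{l:XkH}\eqref{it:XkH0}\eqref{it:XkH0gen} and Lemma~\ref{l:lak0H}\eqref{it:lak0} specialised to $H=0$. In particular, $\Ker S=\Ker(\tR_\xi|_{T_xM})$ equals $L_0$, which by Lemma~\ref{l:Rijk}\eqref{it:ali0} is the centraliser of $\xi$ inside $\xi^\perp\subset\gp$.

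Fix $x_0\in M'$ and $\xi_0=\xi(x_0)$, and pick a Cartan subalgebra $\ga\subset\gp$ with $\xi_0\in\ga$ (every vector of $\gp$ lies in some Cartan subalgebra). Since $\ve=-1$, the eigenvalues of $\tR_{\xi_0}$ are $-\<\beta,\xi_0\>^2$ for $\beta\in\Delta$ with $\gp_\beta\subset L_{-\<\beta,\xi_0\>^2}$, and $L_0=(\ga\cap\xi_0^\perp)\oplus\bigoplus_{\<\beta,\xi_0\>=0}\gp_\beta$. A choice $\Delta^+\subset\Delta$ of positive roots produces an Iwasawa decomposition $\g=\gk\oplus\ga\oplus\gn$. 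I select $\Delta^+$ so that the actual principal curvatures are $\la_i=\<\beta_i,\xi_0\>$ for positive roots $\beta_i$; then $H=\sum_i\la_i$ rearranges as $\sum_{\beta\in\Delta^+}(\dim\gp_\beta)\<\beta,\xi_0\>=\<A_H,\xi_0\>$, so $H=0$ forces $\xi_0\perp A_H$. This is exactly the condition under which $\gs':=\xi_0^\perp\subset\gs$ is a codimension-one subalgebra of Iwasawa type, Einstein with mean curvature vector $A_H$ by \cite[Lemma~1.4]{Wol}. The corresponding connected Lie subgroup $\rS'\subset\rA\rN$ is the model Einstein hypersurface through $x_0$ with unit normal $\xi_0$ from Example~\ref{ex:solv}, and a short Koszul-formula computation (using symmetry of $\ad_{\xi_0}|_\gs$) yields that its shape operator at $x_0$ is $\ad_{\xi_0}|_{\gs'}$, with eigenvalues $\<\beta,\xi_0\>$ on the directions $X_\beta+\theta_\beta X_\beta$ ($\beta\in\Delta^+$) and $0$ on $\ga\cap\xi_0^\perp$, matching $S|_{T_{x_0}M}$.

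The final step is to show $M$ and $\rS'$ coincide near $x_0$. I intend to prove that every Killing vector field $X^*$ on $\tM$ generated by $X\in\gs'\subset\g$ is tangent to $M$ along $M$: setting $f_X(y)=\<X^*(y),\xi(y)\>$ for $y\in M$, one has $f_X(x_0)=0$ by the tangent-space match, and the derivative $Y(f_X)$ for $Y\in T_yM$ can be computed via the Killing equation $\<\on_YX^*,\xi\>=-\<\on_\xi X^*,Y\>$ together with $\on_Y\xi=-SY$. Using the constancy of $\la_i,\al_i$ together with the Codazzi identities~\eqref{eq:codazzi},~\eqref{eq:dG2nR}, one obtains a homogeneous first-order linear system on the vector $(f_X)_{X\in\text{basis of }\gs'}$ along any curve in $M$ through $x_0$, with vanishing initial data; hence each $f_X\equiv 0$ on (a neighbourhood of $x_0$ in) $M$. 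Therefore the $\rS'$-action on $\tM$ preserves $M$ locally, so $M$ is an open subset of the $\rS'$-orbit through $x_0$, i.e. of $\rS'$ itself.

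The main obstacle is this last rigidity step: propagating the first-order algebraic agreement of tangent planes and shape operators at $x_0$ to a full local coincidence. Closing the ODE system for the $f_X$ depends crucially on combining the constancy of the $\la_i,\al_i$ with the symmetric-space curvature identities developed in Section~\ref{s:pfgen}; it also requires ruling out non-root-space mixing in each eigenspace $L_\al$, which should follow from Codazzi together with Lemma~\ref{l:rr}\eqref{it:rr1}. A secondary subtlety is verifying that a choice of $\Delta^+$ compatible with the signs of the $\la_i$ actually exists; here the Weyl group action on the possible positive systems provides the needed flexibility, and the identity $H=\<A_H,\xi_0\>$ remains valid for each such choice.
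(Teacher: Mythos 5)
Your overall strategy --- build a codimension-one solvable model subgroup matching $M$ to second order at a point $x_0$, then prove local coincidence --- is the same as the paper's, but both halves of your argument contain genuine gaps at exactly the places where the paper has to work hardest.

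First, the construction of the adapted Iwasawa decomposition. You ``select $\Delta^+$ so that the actual principal curvatures are $\la_i=\<\beta_i,\xi_0\>$'', but this presupposes (i) that each eigenspace $V_{\la_i}$ of $S$ is a sum of root spaces of a Cartan subalgebra $\ga\ni\xi_0$ (a priori $L_{\al}=V_+^{\al}\oplus V_-^{\al}$ can split $L_\al$ transversally to the root space decomposition, and $L_\al$ may contain several $\gp_\beta$), and (ii) that the resulting sign assignment is additively closed, i.e.\ $\la|_{\gp_{\beta+\gamma}}=\la|_{\gp_\beta}+\la|_{\gp_\gamma}$, without which the chosen set of roots is not a positive system. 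Neither follows from ``Weyl group flexibility'': the signs are dictated by $S$, not freely chosen, and the additivity is precisely the content of Lemma~\ref{l:singgen}\eqref{it:Psi+}, which the paper derives from \eqref{eq:etala} and the Codazzi analysis. Moreover, when $\xi_0$ is singular the choice of Cartan subalgebra containing $\xi_0$ is itself non-trivial; the paper circumvents this by building the solvable algebra directly from the eigenspaces via the map $F(X_i)=X_i+\la_i^{-1}[\xi,X_i]$ and by taking an auxiliary Iwasawa decomposition of the reductive algebra $\gv=\mN\oplus[\mN,\mN]$ attached to the Lie triple system $\mN$ (Lemmas~\ref{l:LF}--\ref{l:Iwasawa}). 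This is several pages of unavoidable work that your sketch compresses into a ``secondary subtlety''.

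Second, and more seriously, the rigidity step does not close as stated. For a Killing field $X^*$ one has, at $y\in M$, $Y(f_X)=-\<\on_\xi X^*,Y\>-\<X^*,SY\>$; identifying $T_y\tM$ with $\gp$ and writing $X=X_\gp+X_\gk$, this equals $-\<[X_\gk,\xi],Y\>-\<X_\gp,SY\>$, which depends on the tangential part of $X^*(y)$ and on the full shape operator of $M$ at $y$ --- data not expressible through the scalars $(f_{X'})_{X'\in\gs'}$. To make the derivative vanish one would need the shape operator of $M$ at $y$ to agree with the model's, which is a second-order condition that must itself be propagated, bringing in third-order data, and so on. Second-order tangency of two hypersurfaces at a point does not in general force local coincidence; the paper's Lemma~\ref{l:analyt} gets around this by using that $M$ is minimal (since $H=0$), hence analytic, and then showing by differentiating $S^2=-\tR_\xi$ that all Taylor coefficients of the graph function are determined --- with a separate argument for the degenerate index strings where $\la_{i_p}+\la_{i_q}=0$ for all pairs, resolved via the totally geodesic $L_0$-leaves. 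Some such all-orders or analyticity argument is needed; a first-order ODE on the $f_X$ alone cannot do the job.
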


For the proof, we will need only a small piece of the hypersurface $M$, and so we can assume that $\al_i$ and $\la_i$ are constant (and then $\la_i^2=-\al_i$ by \eqref{eq:Gaussal}) for all $i=1, \dots, n$. The proof goes as follows. Starting with $\xi$ and $S$ at a single point $x \in M$, we explicitly construct an Iwasawa decomposition of $\rG$ (see Section~\ref{ss:iwa}) such that $\xi \in \ga$ and such that the codimension one Einstein subgroup $\Sh \subset \rA \rN = \tM$ with the subalgebra $\xi^\perp$ has the same shape operator $S$ at $x$. This is the most involved step, as $\xi$ may be singular and hence belong to many Cartan subalgebras; we will see that when $\xi$ is ``very singular", the choice of such Cartan subalgebra for our purposes may not even be unique. The required Iwasawa decomposition is constructed piece-by-piece in Lemmas~\ref{l:singgen}, \ref{l:LF}, \ref{l:mN} and \ref{l:Iwasawa}. Then the proof is completed by Lemma~\ref{l:analyt} which shows that by analyticity (which follows from minimality), two Einstein hypersurfaces $\Sh$ and $M$ having the second order tangency at $x$ must (locally) coincide.

Let $x \in M$; as usual, we identify $\gp$ with $T_x\tM$.

For $i, j$ such that $\la_i, \la_j \ne 0$, denote
\begin{equation}\label{eq:defPsi}
\Psi(X_i,X_j)=-\la_i^{-1} \tR(\xi, X_i)X_j+\la_j^{-1} \tR(\xi, X_j)X_i=\la_i^{-1} [[\xi, X_i],X_j] - \la_j^{-1} [[\xi, X_j],X_i],
\end{equation}
and extend $\Psi$ by bilinearity to $\gp \cap (L_0 \oplus \br \, \xi)^\perp$. Note that $\Psi$ is skew-symmetric and $\Psi(X,Y) \perp \xi$.

\begin{lemma} \label{l:singgen}
In the assumptions of Proposition~\ref{p:solv}, we have the following.
  \begin{enumerate}[label=\emph{(\alph*)},ref=\alph*]
    \item \label{it:Psi+}
    $\tR(X_k,X_i,X_j,\xi)$ can only be nonzero when one of $\la_i, \la_j, \la_k$ is the sum of the other two. Moreover, assuming $\la_i, \la_j \ne 0$ we have $\Psi(V_{\la_i}, V_{\la_j}) \subset V_{\la_i + \la_j}$.

    \item \label{it:Gammasing}
    Suppose $\la_i \ne \la_j$. Then we have $\G_{ki}^{\hphantom{k}j}= \la_k^{-1} \tR(\xi, X_k,X_j,X_i)$ when $\la_k \ne 0$, and $\G_{ki}^{\hphantom{k}j}= \la_i^{-1} \tR(\xi,X_j,X_i,X_k)$ when $\la_k =0$ and $\la_i \ne 0$.

    \item \label{it:Gauss+-}
    Suppose $\la_i =\la_r =-\la_j=-\la_k\ne 0$. Then
    \begin{multline*}
        \la_i^2\tR(X_i,X_j,X_r,X_k)= \<\tR(\xi, X_i)X_k,\tR(\xi, X_j)X_r\> - \<\tR(\xi, X_i)X_r,\tR(\xi, X_j)X_k\> \\
        - \la_i \<\tR(\xi, X_r)X_k, \Psi(X_i,X_j)\>.
    \end{multline*}
  \end{enumerate}
\end{lemma}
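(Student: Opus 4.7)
The plan is to treat (a), (b), (c) together, since (b) is the technical core and feeds both the moreover-part of (a) and the formula in (c). Throughout I exploit that the hypotheses $C=H=0$ reduce~\eqref{eq:Gaussal} to $\al_s=-\la_s^2$, so in Lemma~\ref{l:Rijk}\eqref{it:Rijknon0eta} one has $\eta_s=\pm\la_s$; I abbreviate $r_{abc}=\tR(X_a,X_b,X_c,\xi)$. The first assertion of (a) is then direct: if some $\la_s$ vanishes, Lemma~\ref{l:Rijk}\eqref{it:ali0} together with the symmetries of $\tR$ forces $r_{kij}=0$; otherwise Lemma~\ref{l:Rijk}\eqref{it:Rijknon0eta} yields $\eta_i+\eta_j+\eta_k=0$ for an appropriate sign choice, which, substituting $\eta_s=\pm\la_s$ and relabelling, says precisely that one of $\la_i,\la_j,\la_k$ equals the sum of the other two (equation~\eqref{eq:etala} being then automatic).

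For (b) I would eliminate from a linear system. Codazzi~\eqref{eq:codazzi} with constant $\la_s$ yields the three cyclic relations $r_{kij}=(\la_i-\la_j)\G_{ki}^{\hphantom{k}j}+(\la_k-\la_j)\G_{ij}^{\hphantom{i}k}$ (and permutations), and \eqref{eq:dG2nR} contributes the further relation $(\la_i^2-\la_j^2)\G_{ki}^{\hphantom{k}j}=\la_k(r_{kij}+r_{kji})$. When $\la_k\ne 0$ and $\la_i\ne\la_j$, solving this $3\times 3$ system for $\G_{ki}^{\hphantom{k}j}$ yields $\la_k\G_{ki}^{\hphantom{k}j}=r_{ijk}$; pair-swap and antisymmetry then identify $r_{ijk}$ with $\tR(\xi,X_k,X_j,X_i)$. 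When $\la_k=0$ and $\la_i\ne 0$, \eqref{eq:dG2nR} no longer constrains $\G_{ki}^{\hphantom{k}j}$; I instead apply it to $\G_{ij}^{\hphantom{i}k}$ and use Lemma~\ref{l:Rijk}\eqref{it:ali0} to kill the $r$'s containing $X_k$ in the third slot, after which the remaining Codazzi relations collapse to the claimed $\la_i\G_{ki}^{\hphantom{k}j}=\tR(\xi,X_j,X_i,X_k)$.

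For the moreover-part of (a), I set $c_l=\<\Psi(X_i,X_j),X_l\>$ and show $c_l=0$ unless $\la_l=\la_i+\la_j$. By (b), $c_l=\G_{ij}^{\hphantom{i}l}-\G_{ji}^{\hphantom{j}l}$ in the range where (b) applies. The first part of (a) already restricts the $\la_l$ for which $c_l$ can be nonzero to the set $\{0,\pm(\la_i+\la_j),\pm(\la_i-\la_j)\}$. The cases $\la_l=0$ (handled via Bianchi and the vanishing $r_{abl}=0$ from Lemma~\ref{l:Rijk}\eqref{it:ali0}) and $\la_l=-(\la_i+\la_j)$ (where no sum relation holds, so (a) kills both $r_{jli}$ and $r_{ilj}$ and (b) forces $\G_{ij}^{\hphantom{i}l}=\G_{ji}^{\hphantom{j}l}=0$) are immediate. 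For $\la_l=\pm(\la_i-\la_j)$, subtracting the two combined Codazzi-(b) identities for $\G_{ij}^{\hphantom{i}l}$ and $\G_{ji}^{\hphantom{j}l}$ gives $(\la_i+\la_l-\la_j)\G_{ij}^{\hphantom{i}l}=(\la_j+\la_l-\la_i)\G_{ji}^{\hphantom{j}l}$; one of the two coefficients vanishes, forcing the corresponding Christoffel to zero, and one further application of \eqref{eq:dG2nR} to the remaining symbol then kills it as well.

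Finally (c) follows by direct expansion. Gauss gives $\tR(X_i,X_j,X_r,X_k)=R(X_i,X_j,X_r,X_k)+\la_i^2\delta_{ir}\delta_{jk}$, using $\la_j=-\la_i$ and the orthogonality $X_i\perp X_k$ as $S$-eigenvectors with distinct eigenvalues. Writing $R(X_i,X_j,X_r,X_k)$ at $x$ through $R=\nabla\nabla-\nabla\nabla-\nabla_{[\cdot,\cdot]}$ and converting every Christoffel symbol that appears into a component of some $\tR(\xi,\cdot)\cdot$ via (b), the two $\nabla\nabla$ terms reduce to $\la_i^{-2}$ times the bilinear expression $\<\tR(\xi,X_i)X_k,\tR(\xi,X_j)X_r\>-\<\tR(\xi,X_i)X_r,\tR(\xi,X_j)X_k\>$, while the $\nabla_{[X_i,X_j]}X_r$ term, together with the Kronecker correction from Gauss, assembles into $-\la_i^{-1}\<\tR(\xi,X_r)X_k,\Psi(X_i,X_j)\>$ by the very definition~\eqref{eq:defPsi}. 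The principal difficulty is the case $\la_l=\la_i-\la_j$ of the moreover-part of (a), where the sum condition of (a)'s first part does not immediately annihilate $r_{ilj}$ and one must iterate \eqref{eq:dG2nR} one step further to conclude $\G_{ji}^{\hphantom{j}l}=0$.
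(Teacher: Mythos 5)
Your overall strategy is the same as the paper's (extract the sum relation from Lemma~\ref{l:Rijk}\eqref{it:Rijknon0eta}, determine the $\G$'s from Codazzi and \eqref{eq:dG2nR}, substitute into Gauss), but several of your steps are wrong or incomplete as stated. In part (a), equation \eqref{eq:etala} is not ``automatic'': the relation $\eta_i+\eta_j+\eta_k=0$ with $\eta_s=\pm\la_s$ also allows all three signs to agree, i.e.\ $\la_i+\la_j+\la_k=0$, which is \emph{not} of the form ``one eigenvalue is the sum of the other two'' when all three are nonzero. You need \eqref{eq:etala} --- which with $\ve=-1$ and $\la_s=\ve_s\eta_s$ reduces to $\ve_i\ve_j+\ve_j\ve_k+\ve_k\ve_i=-1$ --- precisely to exclude that case. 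More seriously, in the moreover-part of (a) your claim that $c_l=\<\Psi(X_i,X_j),X_l\>$ vanishes when $\la_l=0$ is false, and the proposed justification does not apply: the components entering $c_l$ are $\tR(\xi,X_i,X_j,X_l)=-\tR(X_j,X_l,X_i,\xi)$, which carry $X_l$ in the \emph{second} slot, so Lemma~\ref{l:Rijk}\eqref{it:ali0} (which only kills $\tR(\cdot,\cdot,X_l,\xi)$) does not annihilate them. Indeed, when $\la_j=-\la_i$ one finds $c_l=\mu(\ve_i+\ve_j)$, generically nonzero --- this is exactly the configuration that later generates the subalgebra $\mL$ in Lemma~\ref{l:LF}. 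The correct dichotomy, which the paper obtains from the proportionality of the curvature triple to $(\eta_j,\eta_k,\eta_i)$ and the resulting $\ve_i=\ve_j$, is: either $c_l=0$ or $\la_i+\la_j=0=\la_l$; in the latter case $X_l\in V_{\la_i+\la_j}$ anyway, so the inclusion survives, but not for the reason you give.

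There is also a degeneracy you do not address in (b): when $\al_i=\al_j$, i.e.\ $\la_j=-\la_i$, the coefficient $\al_j-\al_i$ in \eqref{eq:dG2nR} vanishes and that equation reads $0=0$, so the system you propose to solve does not determine $\G_{ki}^{\hphantom{k}j}$ even when $\la_k\ne0$. You flag a degeneracy only in the $\la_k=0$ branch, but this one occurs in the $\la_k\ne0$ branch and is not a corner case --- it is precisely the configuration $\la_i=-\la_j$ on which part (c) rests. The paper resolves it by first computing $\G_{ij}^{\hphantom{i}k}$ (nondegenerate there, since $\la_k\ne\pm\la_i$ by the first part of (a)) and then recovering $\G_{ki}^{\hphantom{k}j}$ from \eqref{eq:codazzi}. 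Finally, a small point in (c): the Gauss correction is $\tR=R-\la_i^2\delta_{ir}\delta_{jk}$, not $+$, and in the paper's computation this term cancels against a contribution from $\sum_s\G_{ir}^{\hphantom{i}s}\G_{js}^{\hphantom{j}k}$ rather than combining with the $\nabla_{[X_i,X_j]}X_r$ term; the $\Psi$-term comes entirely from $\sum_s(\G_{ij}^{\hphantom{i}s}-\G_{ji}^{\hphantom{j}s})\G_{sr}^{\hphantom{s}k}$.
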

\begin{proof}
  Assertion~\eqref{it:Psi+} follows from Lemma~\ref{l:Rijk}\eqref{it:Rijknon0eta}. As $\al_s=-\la_s^2=-\eta_s^2$ for all $i=1, \dots, n$, we have $\la_s=\ve_s\eta_s,\; \ve_s = \pm 1$, for $s \in \{i,j,k\}$ (note that the choice of signs of $\eta_s$ in Lemma~\ref{l:Rijk}\eqref{it:Rijknon0eta} depend on a particular triple $(i,j,k)$). We have $\ve_i \la_i + \ve_j \la_j + \ve_k \la_k = 0$. If $\eta_k = 0$, then $\la_k=0$ and $\la_i = \pm \la_j$. If $\eta_i \eta_j \eta_k \ne 0$, equation \eqref{eq:etala} implies that not all three of $(\ve_i, \ve_j, \ve_k)$ are equal. In both cases, the first statement follows. For the second statement, suppose that $\<\Psi(X_i,X_j),X_k\> \ne 0$ for some $i,j,k$ with $\la_i\la_j \ne 0$. We have $\<\Psi(X_i,X_j),X_k\>=\la_j^{-1} \tR(\xi, X_j,X_i,X_k)-\la_i^{-1} \tR(\xi, X_i,X_j,X_k)$. Then the triple $r_{jki}=(\tR(\xi,X_j,X_i,X_k), \tR(\xi,X_k,X_j,X_i), \tR(\xi,X_i,X_k,X_j))$ is nonzero and hence by Lemma~\ref{l:Rijk}\eqref{it:Rijknon0eta} is proportional to the triple $(\eta_j,\eta_k,\eta_i)= (\ve_j\la_j,\ve_k\la_k,\ve_i\la_i)$, which gives $\ve_i=\ve_j$. If $\la_k=0$ we get $\ve_i (\la_i + \la_j) = 0$, as required. If $\la_k \ne 0$, we must have $\ve_k=-\ve_i$ (as not all three of $(\ve_i, \ve_j, \ve_k)$ are equal), and so, again, $\la_i+\la_j=\la_k$.

  For assertion~\eqref{it:Gammasing}, in the notation of the previous paragraph, we first assume that $r_{jki}=0$. If $\al_i \ne \al_j$ we get $\G_{ki}^{\hphantom{k}j}=0$ by \eqref{eq:dG2nR}. If $\al_i=\al_j$ we have $\la_i=-\la_j \ne 0$. If $\al_k \ne \al_i$, then $\G_{ik}^{\hphantom{k}j}=0$ by \eqref{eq:dG2nR} and so $\G_{ki}^{\hphantom{k}j}=0$ by \eqref{eq:codazzi}. If $\al_k = \al_i$, then $\la_k \in \{\la_i, \la_j\}$ and again,  $\G_{ki}^{\hphantom{k}j}=0$ by \eqref{eq:codazzi}. If we now assume that $r_{jki} \ne 0$, then by Lemma~\ref{l:Rijk}\eqref{it:Rijknon0eta} we have $r_{jki}=\mu (\ve_j\la_j,\ve_k\la_k,\ve_i\la_i)$ for some $\mu \ne 0$, where $\ve_i \la_i + \ve_j \la_j + \ve_k \la_k = 0$. Then from  \eqref{eq:dG2nR} we get $\G_{ki}^{\hphantom{k}j}(\la_i^2-\la_j^2)=\la_k(\ve_j \la_j - \ve_i\la_i)\mu = \ve_k (\la_i^2-\la_j^2)\mu$. If $\la_i^2 \ne \la_j^2$ (note that in this case $\la_k \ne 0$ by assertion~\eqref{it:Psi+}) we get $\G_{ki}^{\hphantom{k}j} = \ve_k \mu$, as required. If $\la_i^2 = \la_j^2$, then $\la_j=-\la_i$. From assertion~\eqref{it:Psi+} we have $\la_k \ne \pm\la_i$, and so by the above argument we get $\G_{ij}^{\hphantom{i}k} = \ve_i \mu$, and the claim follows from \eqref{eq:codazzi} (we separately consider the cases when $\la_k=\la_i+\la_j, \, \la_i=\la_j+\la_k$ and $\la_j=\la_k+\la_i$).

  To prove assertion~\eqref{it:Gauss+-} we substitute the expressions for $\G_{ca}^{\hphantom{c}b}$ obtained in \eqref{it:Gammasing} into the Gauss equation $\tR(X_i,X_j,X_r,X_k)+\delta_{ir}\delta_{jk}\la_i^2 =X_i(\G_{jr}^{\hphantom{j}k})-X_j(\G_{ir}^{\hphantom{i}k})+\sum_{s=1}^{n} (\G_{jr}^{\hphantom{j}s}\G_{is}^{\hphantom{i}k}-\G_{ir}^{\hphantom{i}s}\G_{js}^{\hphantom{j}k} - (\G_{ij}^{\hphantom{i}s}-\G_{ji}^{\hphantom{j}s}) \G_{sr}^{\hphantom{s}k})$. By assertions~\eqref{it:Psi+} and \eqref{it:Gammasing} we have $\G_{jr}^{\hphantom{j}k}= \la_j^{-1} \tR(\xi, X_j,X_k,X_r)=0$ (and this is satisfied locally), so $X_i(\G_{jr}^{\hphantom{j}k})=0$, and similarly $X_j(\G_{ir}^{\hphantom{i}k})=0$. Next, from assertions~\eqref{it:Psi+} and \eqref{it:Gammasing} we get $\G_{jr}^{\hphantom{j}s} = 0$ when $\la_s=\la_k$, and $\G_{is}^{\hphantom{i}k}=0$ when $\la_s=\la_r$, and so $\sum_{s=1}^{n} \G_{jr}^{\hphantom{j}s}\G_{is}^{\hphantom{i}k}=\la_i^{-2}\<\tR(\xi, X_i)X_k,\tR(\xi, X_j)X_r\>$ by assertions~\eqref{it:Gammasing}. A similar argument shows that $\sum_{s=1}^{n} \G_{ir}^{\hphantom{i}s}\G_{js}^{\hphantom{j}k}=\la_i^{-2}\<\tR(\xi, X_i)X_r,\tR(\xi, X_j)X_k\>-\delta_{ir}\delta_{jk}\la_i^2$. Applying assertions~\eqref{it:Psi+} and \eqref{it:Gammasing} again and using \eqref{eq:defPsi} we find $\sum_{s=1}^{n} (\G_{ij}^{\hphantom{i}s}-\G_{ji}^{\hphantom{j}s}) \G_{sr}^{\hphantom{s}k}= \la_i^{-1} \sum_{s: \la_s = 0}  \tR(\xi, X_k,X_r,X_s) \<\Psi(X_i,X_j),X_s\> + \sum_{s: \la_s \ne 0} \la_s^{-1} \tR(\xi, X_s,X_k,X_r) \<\Psi(X_i,X_j),X_s\>$, as $\tR(\xi, X_k,X_r,X_s) = \tR(\xi, X_r,X_k,X_s)$ for $\la_s=0$ which follows from $\tR(\xi, X_s)=0$ (Lemma~\ref{l:Rijk}\eqref{it:ali0}). But by assertions~\eqref{it:Psi+}, $\Psi(X_i,X_j) \in L_0$, and so the second sum on the right-hand side is zero, and the first one can be formally extended to all $s=1, \dots, n$. Hence
  $\sum_{s=1}^{n} (\G_{ij}^{\hphantom{i}s}-\G_{ji}^{\hphantom{j}s}) \G_{sr}^{\hphantom{s}k}= \la_i^{-1} \<\tR(\xi, X_k)X_r,\Psi(X_i,X_j)\>$, and the claim follows.
\end{proof}

We will now define a linear map $F:\gp \to \g$ whose image will be a subalgebra of $\g$. We define $F$ in three stages. First, we set
\begin{equation} \label{eq:Fnot0}
  F(X_i)= X_i + \la_i^{-1}[\xi, X_i], \qquad \text{when } \la_i \ne 0,
\end{equation}
and then extend by linearity to the subspace $W=(\br \xi \oplus L_0)^\perp \subset \gp$. Note that for $\la_i, \la_j \ne 0$ we have
\begin{equation} \label{eq:Fbr}
[F(X_i),F(X_j)]=\Psi(X_i,X_j) + ([X_i,X_j]+\la_i^{-1}\la_j^{-1}[[\xi,X_i],[\xi,X_j]])
\end{equation}
by \eqref{eq:defPsi}, where the first and the second term on the right-hand side are the $\gp$- and the $\gk$-components of $[F(X_i),F(X_j)]$, respectively. In particular, $\pi_{\gp}[F(X),F(Y)]=\Psi(X,Y)$, for $X, Y \in W$.

\begin{lemma} \label{l:LF}
  We have the following.
  \begin{enumerate}[label=\emph{(\alph*)},ref=\alph*]
    \item \label{it:xiF}
    Let $\la_i \ne 0$. Then $[\xi, F(X)]= \la_i F(X)$ for $X \in V_{\la_i}$.

    \item \label{it:lai+laj}
    Suppose $\la_i, \la_j \ne 0$ and $\la_i+\la_j \ne 0$. Then $[F(V_{\la_i}),F(V_{\la_j})] \subset F(V_{\la_i+\la_j})$.

    \item \label{it:mLFi}
    Denote $\mL=\Span([F(X_i),F(X_j)] \, | \, i,j: \la_j=-\la_i \ne 0)$. Then $[\xi,\mL] = 0$. Moreover, $[\mL, F(V_{\la_r})] \subset F(V_{\la_r})$ for all $\la_r \ne 0$, and so the subspaces $\mL, \mL \oplus F(W) \subset \g$ are subalgebras.

    \item \label{it:mLinj}
    The projection $\pi_\gp: \mL \to \gp$ is injective.
  \end{enumerate}
\end{lemma}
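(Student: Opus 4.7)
My strategy is to prove the four parts in sequence, each building on the previous. Part (a) is a direct computation: under our running assumptions $H = C = 0$, equation \eqref{eq:Gaussal} gives $\al_i = -\la_i^2$, and for $X \in V_{\la_i}$ the Jacobi-operator identity $\tR_\xi X = \al_i X$ translates, via $\tR(X,Y)Z = -[[X,Y],Z]$ on $\gp$, into $\ad_\xi^2 X = \la_i^2 X$. Substituting into \eqref{eq:Fnot0} yields $[\xi, F(X)] = [\xi, X] + \la_i^{-1}\ad_\xi^2 X = \la_i F(X)$. Part (b) then follows from Jacobi combined with (a): $[F(X_i), F(X_j)]$ is an $\ad_\xi$-eigenvector with eigenvalue $\la_i+\la_j \ne 0$. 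The structural remark is that every element $Z$ of the $\mu$-eigenspace of $\ad_\xi$ with $\mu \ne 0$ is uniquely of the form $Z = P + \mu^{-1}[\xi, P]$ with $P = \pi_\gp(Z)$ (match $\gp$- and $\gk$-parts of the eigenvalue equation). By \eqref{eq:Fbr} the $\gp$-part of $[F(X_i), F(X_j)]$ equals $\Psi(X_i, X_j)$, which lies in $V_{\la_i + \la_j}$ via Lemma \ref{l:singgen}\eqref{it:Psi+}; therefore $[F(X_i), F(X_j)] = F(\Psi(X_i, X_j))$.

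The first claim of (c) is immediate from (a), since $\la_i + \la_j = 0$ on every generator of $\mL$. For the containment $[\mL, F(V_{\la_r})] \subset F(V_{\la_r})$, fix a generator $Y = [F(X_i), F(X_j)]$ with $\la_j = -\la_i$ and expand $[Y, F(X_r)]$ by Jacobi as a difference of iterated brackets. When $\la_r \ne \pm\la_i$, both inner brackets are controlled by (b) and a second application of (b) places $[Y, F(X_r)]$ in $F(V_{\la_r})$. The delicate case is $\la_r = \pm\la_i$: one inner bracket falls back into $\mL$, creating a circular Jacobi reduction. To close the circle I switch to a direct analysis: writing $Y = P + K$ with $P = \Psi(X_i, X_j) \in V_0 \cap \xi^\perp$ and $K \in \gk$, the $\gp$-component of $[Y, F(X_r)]$ equals $Q = [K, X_r] + \la_r^{-1}[P, [\xi, X_r]]$, and from $[\xi, Y] = 0$ together with (a) one obtains $[\xi,[\xi, Q]] = \la_r^2 Q$, i.e.\ $Q \in L_{-\la_r^2} = V_{\la_r} \oplus V_{-\la_r}$. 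To eliminate the $V_{-\la_r}$-component I contract $Q$ against a test vector $X_k \in V_{-\la_r}$ and invoke Lemma \ref{l:singgen}\eqref{it:Gauss+-} applied with $\la_i = \la_r = -\la_j = -\la_k$, whose right-hand side involves precisely the inner product $\<\tR(\xi, X_r)X_k, \Psi(X_i,X_j)\>$ needed to force $\<Q, X_k\> = 0$ after the Bianchi identity.

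For part (d), suppose $Y \in \mL$ satisfies $\pi_\gp(Y) = 0$: then $Y \in \gk$ with $[\xi, Y] = 0$, and the already-proved second half of (c) yields $[Y, F(V_{\la_i})] \subset F(V_{\la_i})$ for every $\la_i \ne 0$. The injectivity then drops out of the Killing form. For any generator $[F(Z_i), F(Z_j)]$ of $\mL$ with $\la_j = -\la_i$, $\ad$-invariance gives $B(Y, [F(Z_i), F(Z_j)]) = B([Y, F(Z_i)], F(Z_j)) = B(F(Z_i'), F(Z_j))$ for some $Z_i' \in V_{\la_i}$, and a direct expansion (using $B(\gp, \gk) = 0$ and $B([\xi, A], [\xi, B]) = -B(A, \ad_\xi^2 B)$) collapses this to $(1 - \la_j/\la_i) B(Z_i', Z_j) = 2 B(Z_i', Z_j)$, which vanishes because $Z_i'$ and $Z_j$ lie in distinct eigenspaces of the $B$-symmetric operator $S$. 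Hence $B(Y, \mL) = 0$, in particular $B(Y, Y) = 0$, and since $B$ is negative definite on $\gk$ this forces $Y = 0$. The main obstacle, by a wide margin, is the critical case inside the second half of (c): because $\g_{\la_r}$ strictly contains $F(V_{\la_r})$ whenever $V_{-\la_r} \ne 0$, the $\ad_\xi$-eigenvalue alone cannot detect membership in $F(V_{\la_r})$, and one must resort to the extra algebraic information carried by the Gauss-type identity in Lemma \ref{l:singgen}\eqref{it:Gauss+-} to break the circularity.
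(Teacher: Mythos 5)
Your proposal is correct and follows essentially the same route as the paper: parts (a) and (b) are the identical eigenvalue/$\Psi$-computation, the delicate case $\la_r=\pm\la_i$ of (c) is resolved exactly as in the paper by showing the $\gp$-component lies in $V_{\la_r}\oplus V_{-\la_r}$ and killing the $V_{-\la_r}$-part via Lemma~\ref{l:singgen}\eqref{it:Gauss+-}, and (d) is the same Killing-form argument (you contract $B(Y,[F(Z_i),F(Z_j)])$ directly by $\ad$-invariance where the paper expands $U$ over the $\gk$-parts of the generators, but the content is identical). The only point you leave implicit is the final deduction that $\mL$ and $\mL\oplus F(W)$ are subalgebras, which, as in the paper, follows in one line from (b), the containment $[\mL,F(V_{\la_r})]\subset F(V_{\la_r})$, and the Jacobi identity.
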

\begin{proof}
  Assertion~\eqref{it:xiF} directly follows from \eqref{eq:Fnot0} and the fact that $[\xi,[\xi,X]]=\la_i^2 X$ for $X \in V_{\la_i}$.

  Assertion~\eqref{it:lai+laj} follows from Lemma~\ref{l:singgen}\eqref{it:Psi+}. Indeed, if $\la_i, \la_j, \la_i+\la_j \ne 0$, we have $\Psi(X_i,X_j) \in V_{\la_i+\la_j}$, and then $[\xi, \Psi(X_i,X_j)] = (\la_i+\la_j) [X_i,X_j] + \la_i^{-1}\la_j^{-1}(\la_i+\la_j)[[\xi, X_i],[\xi,X_j]]$ by \eqref{eq:defPsi}, and so by \eqref{eq:Fbr} we have $[F(X_i),F(X_j)]=F(\Psi(X_i,X_j)) \in F(V_{\la_i+\la_j})$, as required.

  For assertion~\eqref{it:mLFi}, let $\la_j=-\la_i \ne 0$. Then $[\xi, [F(X_i),F(X_j)]]=0$ from assertion~\eqref{it:xiF} and the Jacobi identity, which gives that $[\xi,\mL] = 0$. Next, if $\la_r \ne 0, \pm \la_i$, then $[[F(X_i),F(X_j)],F(X_r)] \in F(V_{\la_r})$ by assertion~\eqref{it:lai+laj} and the Jacobi identity.

  Now suppose $\la_r =\la_i$. Denote $P=\pi_\gp [[F(X_i),F(X_j)],F(X_r)]$. Then from assertion~\eqref{it:xiF} we have $[\xi, [[F(X_i),F(X_j)],F(X_r)]] = \la_i [[F(X_i),F(X_j)],F(X_r)]$, and so $[[F(X_i),F(X_j)],F(X_r)]=P+\la_i^{-1}[\xi,P]$ and $[\xi,[\xi,P]] = \la_i^2 P$, that is, $P \in L_{-\la_i^2} = V_{\la_i} \oplus V_{-\la_i}$. It is therefore sufficient to prove that $\<P,X_k\>=0$, for all $X_k \in V_{-\la_i}$. From \eqref{eq:Fbr} we have $P = \la_i^{-2}[[[\xi,X_i],X_j]+[[\xi,X_j],X_i],[\xi,X_r]]+[[X_i,X_j],X_r]-\la_i^{-2}[[\xi,X_i],[\xi,X_j]],X_r]$. So $\<P,X_k\>=-\tR(X_i,X_j,X_r,X_k)-\la_i^{-1} \<\tR(\xi, X_r)X_k,\Psi(X_i, X_j)\>+\la_i^{-2} (\<\tR(\xi, X_i)X_k,\tR(\xi, X_j)X_r\>-\<\tR(\xi, X_j)X_k,\tR(\xi, X_i)X_r\>)=0$, by Lemma~\ref{l:singgen}\eqref{it:Gauss+-}.

  Hence $[\mL, F(V_{\la_r})] \subset F(V_{\la_r})$ for all $\la_r \ne 0$. The fact that both subspaces $\mL$ and $\mL \oplus W$ are subalgebras follows from this and assertion~\eqref{it:lai+laj} (the sum $\mL \oplus W$ is indeed direct, as $[\xi, \mL]=0$ and $[\xi,F(W)]=F(W)$). This proves assertion~\eqref{it:mLFi}.

  To prove assertion~\eqref{it:mLinj}, suppose that $U \in \mL \cap \gk$. Let $\la_j=-\la_i \ne 0$. By assertion~\eqref{it:mLFi} we have $[U,X_i] \in V_{\la_i}$, and so $\<[U,X_i],X_j\>=0$. Hence $B(U,[X_i,X_j])=0$, where $B$ is the Killing form of $\g$. Furthermore, we have $B(U,[[\xi,X_i],[\xi,X_j]])= -B([[\xi,X_i],U],[\xi,X_j])= B([\xi,[U,X_i]],[\xi,X_j])= -B([\xi,[\xi,[U,X_i]]],X_j)= -\la_i^2 B([U,X_i],X_j)=0$ (where we used the fact that $[\xi, U]=0$ and that $[U,X_i] \in V_{\la_i}$ from assertion~\eqref{it:mLFi}).

  But $U \in \Span(\pi_\gk [F(X_i),F(X_j)] \, | \, i,j: \la_j=-\la_i \ne 0)$, and so $U=\sum_{i,j: \la_j=-\la_i \ne 0} u_{ij}([X_i,X_j]- \la_i^{-2} [[\xi,X_i],[\xi,X_j]])$, for some $u_{ij} \in \br$ by \eqref{eq:Fbr}, and so $B(U,U)=0$ from the above argument. As the restriction of $B$ to $\gk$ is negative definite we obtain $U=0$.
\end{proof}

The subspace $\pi_\gp \mL$ lies in $L_0$ (by Lemma~\ref{l:LF}\eqref{it:mLFi} and as $\Psi(X_i,X_j) \perp \xi$ by \eqref{eq:defPsi}). We now define
\begin{equation} \label{eq:FinL}
  F(\pi_\gp L)= L, \qquad \text{for } L \in \mL.
\end{equation}
This is well defined by Lemma~\ref{l:LF}\eqref{it:mLinj}.

To extend $F$ to the whole of $\gp$ it remains to define it on the subspace $\mN = (L_0 \oplus \br \xi) \cap (\pi_\gp \mL)^\perp$. Note that $[\xi, \mN]=0$.

\begin{lemma} \label{l:mN}
  The following holds. 
  \begin{enumerate}[label=\emph{(\alph*)},ref=\alph*]
    \item \label{it:commutes}
    Let $Z \in L_0 \oplus \br \xi$. Then the operator $A_Z$ on $T_xM$ given by $A_ZX = \tR(X,\xi)Z$ commutes with $S$ if and only if $Z \in \mN$.

    \item \label{it:Z}
    Let $Z \in \mN$. Then $[Z, F(V_{\la_i})] \subset F(V_{\la_i})$ for $\la_i \ne 0$, and $[Z, \mL] \subset \mL$.

    \item \label{it:Nlts}
    The subspace $\mN \subset \gp$ is a Lie triple system.
  \end{enumerate}
\end{lemma}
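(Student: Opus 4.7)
The plan is to prove the three parts in order. Throughout we use that $L_0 \oplus \br\xi$ coincides with the centralizer of $\xi$ in $\gp$: for $Z \in L_0$, the identity $\tR_\xi Z = 0$ combined with negative definiteness of $B$ on $\gk$ forces $[\xi, Z] = 0$, exactly as in the proof of Lemma~\ref{l:Rijk}\eqref{it:ali0}.

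For part~\eqref{it:commutes}, first note that $A_\xi = (\tR_\xi)_{|T_xM} = -S^2$ by~\eqref{eq:Gaussinv} (using $H=C=0$), so $A_\xi$ commutes with $S$ and it suffices to treat $Z \in L_0$. The condition $[A_Z, S] = 0$ amounts to $\<A_Z X_i, X_j\> = 0$ whenever $\la_i \ne \la_j$. Lemma~\ref{l:singgen}\eqref{it:Psi+} applied to the triple $(Z, X_i, X_j)$ with $\la_Z = 0$ forces such entries to vanish automatically unless $\la_j = -\la_i \ne 0$, and for those pairs Lemma~\ref{l:Rijk}\eqref{it:Rijknon0eta} with $\eta$-triple $(0, -\la_i, \la_i)$ yields $\<A_Z X_i, X_j\> = \<A_Z X_j, X_i\>$. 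A direct unpacking of \eqref{eq:defPsi} then gives
\begin{equation*}
  \<Z, \Psi(X_i, X_j)\> = -\la_i^{-1}\bigl(\<A_Z X_i, X_j\> + \<A_Z X_j, X_i\>\bigr) = -2\la_i^{-1}\<A_Z X_i, X_j\>,
\end{equation*}
so $[A_Z, S] = 0$ is equivalent to $Z \perp \Psi(X_i, X_j)$ for all relevant pairs, i.e.\ to $Z \in (\pi_\gp\mL)^\perp$.

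For part~\eqref{it:Z}, Jacobi with $[\xi, Z] = 0$ yields $[Z, [\xi, X_i]] = [\xi, [Z, X_i]]$, whence
\begin{equation*}
  [Z, F(X_i)] = [Z, X_i] + \la_i^{-1}[\xi, [Z, X_i]].
\end{equation*}
Setting $Y := \la_i^{-1}[\xi, [Z, X_i]] \in \gp$, a second application of Jacobi together with $[\xi, [\xi, X_i]] = \la_i^2 X_i$ gives $[\xi, Y] = \la_i [Z, X_i]$, so $[Z, F(X_i)] = Y + \la_i^{-1}[\xi, Y] = F(Y)$. A short computation identifies $Y = -\la_i^{-1} A_Z X_i$, and by part~\eqref{it:commutes} the eigenspace $V_{\la_i}$ is $A_Z$-invariant, hence $Y \in V_{\la_i}$. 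This proves $[Z, F(V_{\la_i})] \subset F(V_{\la_i})$; expanding $[Z, [F(X_i), F(X_j)]]$ via Jacobi and using Lemma~\ref{l:LF}\eqref{it:lai+laj},\eqref{it:mLFi} then gives $[Z, \mL] \subset \mL$.

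For part~\eqref{it:Nlts}, fix $Z_1, Z_2, Z_3 \in \mN$ and set $U := [Z_1, Z_2] \in \gk$. Jacobi with $[\xi, Z_a] = 0$ gives $[\xi, U] = 0$, and a further Jacobi step yields $[\xi, [U, Z_3]] = 0$, placing $[U, Z_3] \in L_0 \oplus \br\xi$. For orthogonality, part~\eqref{it:Z} implies that each $\ad_{Z_a}$ preserves $\mL$, so $\ad_U = [\ad_{Z_1}, \ad_{Z_2}]$ preserves $\mL$ as well; consequently for any $L \in \mL$ with $V := \pi_\gp L$ one has $[U, V] = \pi_\gp[U, L] \in \pi_\gp\mL$. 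Since $Z_3 \perp \pi_\gp\mL$ and $B$ is ad-invariant, $\<[U, Z_3], V\> = -\<Z_3, [U, V]\> = 0$, so $[U, Z_3] \in \mN$. The main obstacle is part~\eqref{it:commutes}: the off-diagonal symmetry $\<A_Z X_i, X_j\> = \<A_Z X_j, X_i\>$ on the blocks $V_{\la_i} \oplus V_{-\la_i}$ for $Z \in L_0$ is not a formal consequence of the curvature symmetries and must be extracted from the sign-coupling in Lemma~\ref{l:Rijk}\eqref{it:Rijknon0eta}; once that is secured, parts~\eqref{it:Z} and~\eqref{it:Nlts} reduce to Jacobi-identity bookkeeping and ad-invariance of the Killing form.
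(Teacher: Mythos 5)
Your proposal is correct, and for parts~(\ref{it:commutes}) and~(\ref{it:Z}) it is essentially the paper's own argument: reduce commutation with $S$ to the vanishing of the off-diagonal entries $\<A_ZX_i,X_j\>$, kill all blocks except $\la_j=-\la_i\ne 0$ via Lemma~\ref{l:singgen}\eqref{it:Psi+}, identify the surviving entries with $\<Z,\Psi(X_i,X_j)\>$, and then compute $[Z,F(X_i)]=-\la_i^{-1}F(A_ZX_i)$. For part~(\ref{it:Nlts}) you take a genuinely different (and slightly more direct) route: the paper shows that $Z=[[Z_1,Z_2],Z_3]$ again satisfies $[Z,F(V_{\la_i})]\subset F(V_{\la_i})$, projects to $\gp$ to conclude $A_Z$ commutes with $S$, and feeds this back into part~(\ref{it:commutes}); you instead verify the defining orthogonality $[[Z_1,Z_2],Z_3]\perp\pi_\gp\mL$ directly, using that $\ad_{[Z_1,Z_2]}$ preserves $\mL$ and that the inner product is $\ad_\gk$-invariant. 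Both work; yours avoids re-invoking the commutation criterion at the cost of one extra invariance observation. One small inaccuracy in your closing remark: the symmetry $\<A_ZX_i,X_j\>=\<A_ZX_j,X_i\>$ \emph{is} a formal consequence of the curvature identities once one knows $[\xi,Z]=0$ (then $\tR(\xi,Z)=0$ and the first Bianchi identity gives the symmetry of $A_Z$ on all of $T_xM$) --- this is exactly how the paper disposes of it --- so the detour through the sign-coupling of Lemma~\ref{l:Rijk}\eqref{it:Rijknon0eta} is valid but unnecessary.
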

\begin{proof}
  Let $Z \in L_0 \oplus \br \xi$. As $[\xi,Z]=0$, the operator $A_Z$ is symmetric and $T_xM$ is its invariant subspace. Moreover, $A_Z$ commutes with $S$ if and only if $\tR(X_i,\xi,Z,X_j)=0$, for all $i,j$ such that $\la_i \ne \la_j$. By Lemma~\ref{l:singgen}\eqref{it:Psi+}, this is equivalent to saying that $Z \perp \Psi(X_i,X_j)$, for all $i,j$ such that $\la_j=-\la_i \ne 0$, that is, to the fact that $Z \perp \pi_\gp \mL$. This proves assertion~\eqref{it:commutes}.

  For assertion~\eqref{it:Z}, take $Z \in \mN$ and let $\la_i \ne 0$. Then by \eqref{eq:Fnot0} $[Z,F(X_i)]=[Z,X_i]+\la_i^{-1}[Z,[\xi,X_i]]= -\la_i^{-2} [\xi,A_ZX_i]-\la_i^{-1}A_ZX_i$. By assertion~\eqref{it:commutes}, $A_ZX_i \in V_{\la_i}$, and so $[Z,F(X_i)]= -\la_i^{-1}F(A_ZX_i) \in F(V_{\la_i})$. As $\mL=\Span([F(X_i),F(X_j)] \, | \, i,j: \la_j=-\la_i \ne 0)$, the second statement of the assertion follows from the Jacobi identity.

  For assertion~\eqref{it:Nlts}, take $Z_1,Z_2,Z_3 \in \mN$ and let $Z=[[Z_1,Z_2],Z_3]$. Then $Z \in L_0$, as $L_0$ is a Lie triple system. Moreover, by assertion~\eqref{it:Z} and from the Jacobi identity, for $\la_i \ne 0$ we have $[Z,F(V_{\la_i})] \subset F(V_{\la_i})$. Projecting to $\gp$ we obtain $A_Z X_i \in V_{\la_i}$, and so $A_Z$ commutes with $S$. Then $Z \in \mN$ by assertion~\eqref{it:commutes}.
\end{proof}

By Lemma~\ref{l:mN}\eqref{it:Nlts}, if $\mN \ne 0$, it is tangent to a totally geodesic submanifold of $\tM$, which must then be a symmetric space of non-positive curvature. Note that it is reducible: for example, as $[\xi, \mN]=0$, its de Rham decomposition has a non-trivial flat factor. The Lie algebra $\gv=\mN \oplus [\mN,\mN]$ is reductive, and the subalgebra $[\mN,\mN]=\gv \cap \gk$ is its maximal compact subalgebra (so that the above decomposition of $\gv$ is a Cartan decomposition). Consider an arbitrary Iwasawa decomposition $\gv=[\mN,\mN] \oplus \ga' \oplus \gn'$ of $\gv$, where $\ga' \oplus \gn'$ is solvable, $\gn'$ is nilpotent and $\ga'$ abelian \cite[Ch.~VI, \S 3]{Hel}. Note that $\ga' \subset \mN, \, [\ga' \oplus \gn', \ga' \oplus \gn'] = \gn'$ by the properties of Iwasawa decomposition and that $\pi_\gp: \ga' \oplus \gn' \to \mN$ is a linear isomorphism (and its restriction to $\ga'$ is the identity map). We now define (the restriction of) the map $F$ on $\mN$ to be the inverse of this isomorphism:
\begin{equation} \label{eq:FinN}
  F(Z)= ((\pi_\gp)_{|\ga' \oplus \gn'})^{-1}Z , \qquad \text{for } Z \in \mN.
\end{equation}
Note that $\xi \in \ga'$ (as $[\xi, \gv]=0$), and so $F(\xi)=\xi$.

The following lemma is central for the proof of Proposition~\ref{p:solv}. Note that most parts of it are well-known from the general theory. The most important point (which required all the construction above) is assertion \eqref{it:shape} which states that the resulting Einstein solvmanifold $\Sh$ ``agrees" with $M$ at the point $x$ up to the second order.
\begin{lemma} \label{l:Iwasawa}
  Let $\ogs=F(\gp)$, where the linear map $F:\gp \to \g$ is defined by \eqref{eq:Fnot0}, \eqref{eq:FinL} and \eqref{eq:FinN}. We have the following.
    \begin{enumerate}[label=\emph{(\alph*)},ref=\alph*]
    \item \label{it:subal}
    The subspace $\ogs \subset \g$ is a subalgebra and $\xi \in \ogs$.

    \item \label{it:exps}
    Let $\oSh \subset \rG$ be the subgroup of $\rG$ with the Lie algebra $\ogs$. Define the inner product $\ip_{\ogs}$ on $\ogs$ by the push-forward from $\gp$ \emph{(}so that $F:\gp \to \ogs$ is a linear isometry\emph{)}, and equip $\oSh$ with the left-invariant metric defined by $\ip_{\ogs}$. Then the metric Lie group $\oSh$ acts simply transitively on $\tM$, so that $\tM$ is isometric to $\oSh$.

    \item \label{it:solvI}
    The group $\oSh$ and the algebra $\ogs$ are solvable, and $\g = \gk \oplus \ogs$ is an Iwasawa decomposition of $\g$.

    \item \label{it:shape}
    The orthogonal complement $\gs$ to $\xi$ in $\ogs$ is a solvable subalgebra. The corresponding subgroup $\Sh$ with the induced metric is an Einstein hypersurface in $\oSh$ \emph{(}in $\tM$\emph{)} whose shape operator at $x$ is the push-forward of $S$ under $F$.
  \end{enumerate}
\end{lemma}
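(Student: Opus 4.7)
\smallskip

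\noindent\emph{Proof proposal.} The plan is to verify the four assertions in sequence, using the bracket relations of Lemmas~\ref{l:LF} and \ref{l:mN}, standard Iwasawa theory, and \cite[Lemma~1.4]{Wol}. For assertion~\eqref{it:subal}, we first observe $\xi \in \ogs$: since $\Psi(X,Y)\perp\xi$ by~\eqref{eq:defPsi}, we have $\xi \perp \pi_\gp\mL$, so $\xi \in \mN$, and being central in $\gv$ it lies in $\ga'$, whence $F(\xi)=\xi$. Closure of $\ogs = F(W) \oplus \mL \oplus (\ga' \oplus \gn')$ under the bracket is then checked case by case: $[F(W), F(W)] \subset F(W) \oplus \mL$ by~\eqref{eq:Fbr} combined with Lemma~\ref{l:LF}\eqref{it:lai+laj} (when $\la_i+\la_j \ne 0$) or the definition of $\mL$ (when $\la_j=-\la_i$); $[\mL, F(W)] \subset F(W)$ by Lemma~\ref{l:LF}\eqref{it:mLFi}; $[\mL, \mL] \subset \mL$ via the Jacobi identity, reducing $[[A,B],[C,D]]$ to brackets covered by Lemma~\ref{l:LF}\eqref{it:mLFi}; $[F(\mN), F(W)\oplus\mL] \subset F(W) \oplus \mL$ by Lemma~\ref{l:mN}\eqref{it:Z} combined with the Jacobi identity to absorb the $\gk$-component $[\mN,\mN]$ of $F(\mN)$; and $[F(\mN), F(\mN)] \subset F(\mN)$ because $\ga' \oplus \gn'$ is a subalgebra of $\gv$ by construction.

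For assertions~\eqref{it:exps} and~\eqref{it:solvI}, the map $\pi_\gp|_\ogs$ is a linear bijection onto $\gp$ (injective on each summand by~\eqref{eq:Fnot0}, Lemma~\ref{l:LF}\eqref{it:mLinj}, and~\eqref{eq:FinN}, with pairwise orthogonal $\gp$-images $W, \pi_\gp\mL, \mN$ spanning $\gp$), so $\ogs \cap \gk = 0$ and $\g = \gk \oplus \ogs$ as vector spaces. The $\ad_\xi$-eigenspace decomposition of $\ogs$ has eigenvalue $\la_i$ on $F(V_{\la_i})$ and $0$ on $\mL \oplus F(\mN)$; from this, $\ogs$ is solvable with nilradical containing $\bigoplus_{\la_i\ne 0} F(V_{\la_i}) \oplus \mL \oplus \gn'$. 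Being a solvable complement to $\gk$ of dimension $\dim\gp$ in $\g$, by the uniqueness of Iwasawa decompositions $\ogs$ is $\Ad_\rK$-conjugate to the standard Iwasawa subalgebra; hence $\g = \gk \oplus \ogs$ is itself an Iwasawa decomposition, and the corresponding subgroup $\oSh$ acts simply transitively on $\tM$ as recalled in Section~\ref{ss:iwa}. The inner product $\ip_{\ogs}$ pushed forward via $F$ makes $F$ a linear isometry at the origin, which extends to a global isometry of $\oSh$ onto $\tM$ by the $\oSh$-equivariance of both metrics.

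For assertion~\eqref{it:shape}, \cite[Lemma~1.4]{Wol} applies once we verify $\xi \perp A_H$. This follows from $\<A_H, \xi\>=\Tr\ad_\xi|_{\ogs}=\sum_i \la_i = H = 0$. Hence $\gs=\xi^\perp \cap \ogs$ is a codimension-one Einstein subalgebra with the same Einstein constant, yielding the subgroup $\Sh$. For the shape operator at $x=e$, the pairwise orthogonality of $F(W), \mL, F(\mN)$ under $\ip_{\ogs}$ (inherited via $F$ from the orthogonal decomposition $\gp = W \oplus \pi_\gp\mL \oplus \mN$) makes $\ad_\xi$ symmetric on $\ogs$; the Koszul formula $2\<\on_X\xi, Z\> = \<[X,\xi], Z\> - \<[\xi,Z], X\> + \<[Z,X], \xi\>$, together with $[\xi, F(V_{\la_i})]=\la_i F(V_{\la_i})$, $[\xi, \mL]=[\xi, F(\mN)]=0$, and the observation that $\<[Z,X], \xi\>_{\ogs}=0$ on every summand pair (as $[Z,X]$ lies in a subspace orthogonal to $\xi$), then yields $\on_X\xi=-\la_i X$ on $F(V_{\la_i})$ and $\on_X\xi=0$ on $\mL \oplus (\gs \cap F(\mN))$. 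Under the isometry $F$, this matches $S$ on $T_xM$, using $\la_k=0$ whenever $X_k \in L_0$ (Lemma~\ref{l:lak0H}\eqref{it:nolakH}).

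The main obstacle is the subalgebra verification in~\eqref{it:subal}: Lemma~\ref{l:mN}\eqref{it:Z} controls the action of $\mN \subset \gp$ on $F(W)$ and $\mL$, but closure under $[F(\mN), \cdot]$ requires extending this to the $\gk$-component $[\mN,\mN]$ of $F(\mN)$ via a Jacobi-identity bootstrap. A secondary delicate point is the identification of $\g=\gk\oplus\ogs$ as Iwasawa: rather than exhibiting an explicit abelian and nilpotent factorization of $\ogs$, we appeal to the dimension count and the uniqueness of Iwasawa decompositions up to $\Ad_\rK$-conjugacy.
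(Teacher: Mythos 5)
Your parts~(\ref{it:subal}) and~(\ref{it:shape}) follow essentially the paper's route: closure of $\ogs$ comes from Lemma~\ref{l:LF}\eqref{it:mLFi}, Lemma~\ref{l:mN}\eqref{it:Z} and a Jacobi-identity bootstrap passing from $\ad_\mN$- to $\ad_\gv$-invariance, and the shape operator and Einstein property come from the Koszul formula, the identity $[\xi,F(X)]=F(SX)$, and \cite[Lemma~1.4]{Wol}. These are fine.

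The genuine gap is in parts~(\ref{it:exps}) and~(\ref{it:solvI}). First, solvability of $\ogs$ does not follow ``from the $\ad_\xi$-eigenspace decomposition'': the zero-eigenspace is $\mL\oplus(\ga'\oplus\gn')$, and nothing you (or the preceding lemmas) have established shows that $\mL$ is solvable; a graded Lie algebra whose degree-zero part is nonsolvable is nonsolvable, so the grading by itself gives nothing. Second, the claim that a solvable vector-space complement to $\gk$ is automatically $\Ad_\rK$-conjugate to the standard Iwasawa subalgebra is not what ``uniqueness of Iwasawa decompositions'' says (that concerns decompositions $\g=\gk\oplus\ga\oplus\gn$ already of the prescribed form); it is a nontrivial theorem that requires the subgroup to be in ``standard position''. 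The paper sidesteps both problems by reversing your logical order: it first proves~(\ref{it:exps}) directly --- the metrics of $\oSh$ and $\tM$ agree at $x$, both spaces are complete, the orbit map is equivariant, hence a local and then (by simple connectedness) global isometry, \cite[Lemma~2.4]{AW1} --- then deduces solvability of $\oSh$ from the fact that it acts simply transitively by isometries on a space of nonpositive curvature (\cite[Corollary~2.6]{AW1}), and only then verifies the ``standard position'' condition of \cite[Definition~6.4]{AW2} (the orthogonal complement of $[\ogs,\ogs]$ in $\ogs$ is $\ga'\subset\gp$, hence Killing-orthogonal to $\gk$) before invoking \cite[Theorem~6.9]{AW2} to conclude that $\g=\gk\oplus\ogs$ is an Iwasawa decomposition. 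To repair your argument you would need either a direct proof that $\mL$ is nilpotent together with a justification of the conjugacy claim, or a restructuring along the paper's lines; your own closing remark about ``the dimension count and the uniqueness of Iwasawa decompositions'' flags exactly the step that is not available in that form.
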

\begin{proof}
  Assertion~\eqref{it:subal} follows from the construction of $F$: by \eqref{eq:Fnot0}, \eqref{eq:FinL} and \eqref{eq:FinN}, the subspace $\ogs$ is the direct sum $F(W) \oplus \mL \oplus (\ga' \oplus \gn')$, where $\ga' \oplus \gn'$ is a subalgebra of $\gv = \mN \oplus [\mN,\mN]$. Now $\mL \oplus F(W) \subset \g$ is a subalgebra by Lemma~\ref{l:LF}\eqref{it:mLFi}. It is $\ad_\mN$-invariant by Lemma~\ref{l:mN}\eqref{it:Z}, and hence is $\ad_\gv$-invariant and therefore is $\ad_{\ga' \oplus \gn'}$-invariant. The fact that $\xi \in \ogs$ follows as $F(\xi)=\xi$.

  For assertion~\eqref{it:exps} note that the metric group $\oSh$ (with the left-invariant metric defined by $\ip_{\ogs}$) is complete and that the left action of $\oSh$ on both $\tM$ and itself is isometric. As the metrics on $\tM$ and on $\oSh$ agree at $x$ and as both $\oSh$ and $\tM$ are complete (and have the same dimension), the left action of $\oSh$ on $\tM$ is a local isometry, which is then a global isometry as $\tM$ is simply connected (see \cite[Lemma~2.4]{AW1} for details).

  The first statement of assertion~\eqref{it:solvI} follows from \cite[Corollary~2.6]{AW1}. For the second statement, we first prove that the subgroup $\oSh$ is ``in standard position" in the sense of \cite[Definition~6.4]{AW2}. To see that, we note that the subspace $F(W) \oplus \mL \subset \ogs$ lies in the derived algebra of $\ogs$ (for $F(W)$, this follows from Lemma~\ref{l:LF}\eqref{it:xiF}, and for $\mL$, from its definition in  Lemma~\ref{l:LF}\eqref{it:mLFi}). Furthermore, $[\ga' \oplus \gn', \ga' \oplus \gn'] = \gn'$ and $\ga' \subset \mN$ (as $\gv=[\mN,\mN] \oplus \ga' \oplus \gn'$ is an Iwasawa decomposition for $\gv$, by construction). It follows that the orthogonal complement of the derived algebra of $\ogs$ in $\ogs$ is $\ga'$. As $\ga' \subset \mN \subset \gp$, we have $B(\gk, \ga')=0$, where $B$ is the Killing form of $\g$, which shows that $\oSh$ is a subgroup ``in standard position". The claim now follows from \cite[Theorem~6.9]{AW2} (see also \cite[Remark~6.5(c)]{AW2}).

  To prove assertion~\eqref{it:shape} we first note that $\ogs=\ga' \oplus \gn$ (the decomposition is orthogonal with respect to $\ip_{\ogs}$), where $\gn=F(W) \oplus \mL \oplus \gn' = [\ogs,\ogs]$ (as we have shown in the proof of \eqref{it:solvI}). Moreover, $\ga'$ is abelian and $\gn$ is the nilradical of the solvable algebra $\ogs$ (see e.g. \cite[Theorem~5.2]{AW1}). As $\xi \in \ga'$, its orthogonal complement $\gs$ in $\ogs$ is a (solvable) subalgebra of $\ogs$. For a vector $T \in \gp$, denote $\tilde{T}$ the left-invariant vector field on $\oSh$ whose value at $x$ is $F(T)$. Then $\tilde{\xi}$ is a unit normal field along the hypersurface $\Sh \subset \oSh$, and its tangent space at every point is spanned by vectors $\tilde{X}$, where $X \in T_xM$. From the Koszul formula, the push-forward $\overline{S}$ of the shape operator of $\Sh \subset \oSh$ at $x$ is given by $\<\overline{S}X,Y\>=\<\on_{\tilde{X}} \tilde{Y}, \tilde{\xi}\>_{\ogs}=\frac12(\<[F(X), F(Y)], F(\xi)\>_{\ogs}+\<[F(\xi), F(X)], F(Y)\>_{\ogs}+\<[F(\xi), F(Y)], F(X)\>_{\ogs})= \frac12(\<[\xi, F(X)], F(Y)\>_{\ogs}+\<[\xi, F(Y)], F(X)\>_{\ogs})$, as $F(\xi)=\xi$ and as $\xi \perp \gn= [\ogs,\ogs]$. But $[\xi, \mN]=0$, and hence $[\xi, F(\mN)]=0$ as $F(\mN) \subset \gv=\mN \oplus [\mN,\mN]$. Moreover, $[\xi, \mL]=0$ and $[\xi, F(X)]= \la_i F(X)$ for $X \in V_{\la_i}$ by Lemma~\ref{l:LF}(\ref{it:mLFi}, \ref{it:xiF}). It follows that $[\xi, F(X)]= F(SX)$ for $X \in T_xM$. So $\<\overline{S}X,Y\>= \frac12(\<F(SX), F(Y)\>_{\ogs}+\<F(SY), F(X)\>_{\ogs})= \frac12(\<SX, Y\>+\<SY, X\>)$ by the definition of $\ip_{\ogs}$. Hence $\overline{S}=S$.

  It remains to prove that $\Sh$ is Einstein. As $F(\xi)=\xi$ and $[\xi, F(X)]= F(SX)$ for $X \in T_xM$, we obtain $\Tr ((\ad_{F(\xi)})_{|\ogs})= \Tr S=H= 0$. Then by \cite[Lemma~1.4]{Wol}, the Ricci curvature of $\Sh$ is the same as the Ricci curvature of $\oSh$ in the same direction, and so $\Sh$ is Einstein, with the same Einstein constant as $\oSh$.
\end{proof}

Identifying $\tM$ with $\oSh$ as in Lemma~\ref{l:Iwasawa}\eqref{it:exps}, we get two Einstein hypersurfaces $M, \Sh \subset \tM$, both passing through $x$, and having the same unit normal and the same shape operator at $x$. Note that $M$ is minimal, and hence also is $\Sh$ as its mean curvature is constant and is zero at $x$. The following Lemma shows that they (locally) coincide.

\begin{lemma} \label{l:analyt}
  In the assumptions of Proposition~\ref{p:solv}, the Einstein hypersurface $M$ is locally uniquely determined by a unit normal vector and the shape operator \emph{(}relative to that vector\emph{)} at a single point.
\end{lemma}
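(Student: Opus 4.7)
The plan is to combine the real analyticity of $M$ (which follows from $H = 0$) with a determinacy argument for the infinite jet of $M$ at $x$. Once these two ingredients are in place, two Einstein hypersurfaces satisfying the hypotheses of Proposition~\ref{p:solv} and agreeing at $x$ to second order must be locally identical.

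For the analyticity step I would argue as follows. Under our assumptions $H \equiv 0$ on $M$, so $M$ is a minimal hypersurface in the analytic Riemannian manifold $\tM$. The minimal surface system is quasilinear and elliptic with real analytic coefficients, and by Morrey's classical regularity theorem any $C^2$ minimal submanifold of an analytic Riemannian manifold is real analytic. Thus, after possibly shrinking a neighbourhood of $x$, $M$ is analytically embedded in $\tM$ and is consequently determined locally by its $\infty$-jet at $x$.

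For the determinacy of this jet I would exploit the constancy of the principal curvatures. Since $X_k(\la_i) = 0$, the Codazzi equation~\eqref{eq:codazzi} reduces to
\begin{equation*}
 \tR(X_k, X_i, X_j, \xi) = (\la_i - \la_j)\,\G_{ki}^{\hphantom{k}j} - (\la_k - \la_j)\,\G_{ik}^{\hphantom{i}j},
\end{equation*}
and Lemma~\ref{l:singgen}\eqref{it:Gammasing} delivers $\G_{ki}^{\hphantom{k}j}$ as an explicit algebraic expression in $\xi$, the frame $\{X_i\}$, and the ambient curvature tensor $\tR$, at least whenever $\la_i \ne \la_j$. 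Combining this with the Weingarten relation $\on_{X_k}\xi = -SX_k$, with $\on\,\tR = 0$ (parallelism of the curvature of the symmetric space $\tM$), and with the Gauss equation, I would then show by induction that every covariant derivative $\nabla^m S$ and $\on^m \xi$ at $x$ is uniquely determined by the initial data $(\xi(x), S|_x)$. Consequently two hypersurfaces satisfying the hypotheses and sharing the same $(\xi(x), S|_x)$ have the same $\infty$-jet at $x$, and by the analyticity established above they coincide on a neighbourhood of $x$.

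The main obstacle I expect is the degenerate locus where principal curvatures coincide, most significantly within the nullity $\ker S \supset \ker \tR_\xi$ (of dimension at least $\rk \tM - 1$), where the algebraic formulas of Lemma~\ref{l:singgen}\eqref{it:Gammasing} fail. My intended remedy is to observe that the unrecovered coefficients $\G_{ki}^{\hphantom{k}j}$ within a common eigenspace represent only the gauge freedom in the choice of orthonormal frame inside that eigenspace and therefore do not influence the geometric jet of $M$; equivalently, one may rephrase the entire inductive step in terms of the intrinsic tensors $S$ and $\xi$ (viewing Codazzi as an evolution of $S$ along integral curves rather than as an equation for frame-dependent symbols), so that all frame ambiguity disappears from the final uniqueness statement.
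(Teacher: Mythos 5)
Your overall skeleton (analyticity from minimality, then determinacy of the $\infty$-jet at $x$) is the same as the paper's, and the analyticity step is correct and identical. The determinacy step, however, has a genuine gap, and the degenerate case is misdiagnosed. The paper does not run an induction on covariant derivatives of $S$ via Codazzi at all: it writes $M$ as a graph $y_{n+1}=f(y_1,\dots,y_n)$ in normal coordinates and repeatedly differentiates the pointwise algebraic identity \eqref{eq:Gaussinv} with $H=C=0$, i.e. $g^{ls}h_{kl}h_{sj}=-\tR(\partial_k,\xi,\xi,\partial_j)$; evaluating at the origin gives $f_{kji_1\dots i_m}(0)\,(\la_j+\la_k)=(\Phi_{kji_1\dots i_m})_{|y=0}$ with a right-hand side depending only on lower-order derivatives. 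The obstruction is therefore $\la_{i_p}+\la_{i_q}=0$ for \emph{every} pair of indices --- not $\la_i=\la_j$ --- and for derivatives of order at least $3$ this forces all indices into the nullity block $L_0$. Your ``gauge freedom of the frame inside an eigenspace'' remedy does not touch this case: the pure-nullity Taylor coefficients $f_{i_1\dots i_{m+2}}(0)$ are genuine geometric quantities, not frame artefacts, and they vanish only because the nullity leaf through $x$ is a totally geodesic submanifold of $\tM$ determined by $\xi(x)$ alone and contained in $M$, so that one can normalise coordinates to get $f(0,\dots,0,y_{n-d_0+1},\dots,y_n)\equiv 0$. That geometric input is indispensable and absent from your argument.

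Even away from the degenerate locus, your induction does not obviously close. Codazzi together with Lemma~\ref{l:singgen}\eqref{it:Gammasing} does determine $\nabla S$ at $x$ (the components within a common eigenspace vanish automatically, since $\<(\nabla_{X_k}S)X_i,X_j\>=(\la_i-\la_j)\G_{ki}^{\hphantom{k}j}$ by constancy of the $\la_i$), but passing from ``all $\nabla^m S$ and the jet of the induced metric are determined'' to ``the jet of the embedding in $\tM$ is determined'' requires integrating the structure equations of the hypersurface --- an extra step you do not supply and which reintroduces exactly the coefficients you hope to discard as gauge. The paper sidesteps all of this by working with the scalar function $f$ and the single second-order equation \eqref{eq:Gaussinv}; I would recommend adopting that route.
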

\begin{proof}
First of all note that $M$ is analytic as it is a minimal submanifold of the analytic Riemannian symmetric space $\tM$ (e.g. \cite[Theorem~6.7.6]{Mry}, \cite[Lemma~1]{Leu}). Let $x \in M$ and choose geodesic normal coordinates $(y_1, \dots, y_n, y_{n+1})$ for $\tM$ centred at $x$ on a neighbourhood of $x$ in $\tM$ in such a way that $M$ is locally given by the equation $y_{n+1}=f(y_1, \dots, y_n)$, where $f$ is an analytic function with $f(0)=0, \, df(0)=0$.

The distribution $L_0$ is integrable and $\tR$-invariant, with the leaves being totally geodesic in both $M$ and $\tM$ (this easily follows from \eqref{eq:codazzi} and from Lemma~\ref{l:lak0H}\eqref{it:nolakH}). Let $F_0$ be the leaf passing through $x$. We can assume that it is locally given by equations $y_i=\phi_i(y_{n-d_0+1}, \dots, y_n)$, for $i=1, \dots, n-d_0$, where $d_0=\dim L_0$ and $\phi_i(0)=0, \, d\phi(0)=0$ (note that the functions $\phi_i$ are analytic, as $F_0$ is totally geodesic). At the point $x$, the condition that $F_0$ is totally geodesic in $\tM$ gives that the second derivatives of $\phi_i$ at zero vanish and $f_{kl}(0)=0$ for $k, l > n-d_0$ (where the subscripts of $f$ denote the corresponding partial derivatives). Then the coordinates $y'_i$ given by $y'_i=y_i-\phi_i(y_{n-d_0+1}, \dots, y_n)$, for $i=1, \dots, n-d_0$, $y'_k=y_k$ for $k=n-d_0+1, \dots, n$, and $y'_{n+1}=y_{n+1}-f(0, \dots, 0, y_{n-d_0+1}, \dots, y_n)$ are still normal at $x$, as we only change the terms of order greater than $2$. Note that $F_0$ (and hence the change of coordinates) is uniquely determined by the vector $\xi(x)$ ($F_0$ is the unique totally geodesic submanifold passing through $x$ and tangent to $\Ker \tR_\xi (x) \cap \xi^\perp(x)$).

Dropping the dashes from and changing $f$ accordingly we obtain that $M$ is locally given by the equation $y_{n+1}=f(y_1, \dots, y_n)$, where $f$ is an analytic function with $f(0)=0, \, df(0)=0$ and $f(0, \dots, 0, y_{n-d_0+1}, \dots, y_n)=0$. We can further specify the (analytic normal) coordinates $y_i$ in such a way that $f_{ij}(0)=\la_i \delta_{ij}$, for all $i,j =1, \dots, n$ (note that $\la_i \ne 0$ for $i \le n-d_0$). Denote $\bar{g}_{ab}$ and $\bar{\G}_{ab,c}, \, a,b,c =1, \dots, n+1$, the matrix of the first fundamental form and the Christoffel symbols of $\tM$ relative to $y_a, \, a=1, \dots, n+1$ (we have $\bar{g}_{ab}(0)=\delta_{ab}$ and $\bar{\G}_{ab,c}(0)=0$), and $g_{ij}$ the matrix of the induced first fundamental form on $M$ relative to $y_i, \, i=1, \dots, n$ (we have $g_{ij}(0)=\delta_{ij}$). The components $\xi^a$ of $\xi$ depend only on $y_i, f$ and the first derivatives of $f$, and $\xi^a(0)=\delta_{a,n+1}$. Denote $\partial_i$ the tangent vector to $M$ in the direction of $y_i, \, i=1, \dots, n$, that is, $(\partial_i)^i=1, \, (\partial_i)^{n+1}=f_i$, and $(\partial_i)^a=0$ for $a \ne i, n+1$. The components of the second fundamental form $h_{ij}$ of $M$ are given by $h_{ij}=\bar{g}_{a,n+1}\xi^a f_{ij} + \bar{\G}_{ab,c} \psi^{abc}_{ij}$, for $i,j=1, \dots, n$, where we use the Einstein summation convention, and where the functions $\psi^{abc}_{ij}=(\partial_i)^a (\partial_j)^b \xi^c$ depend only on $y_i, f$ and the first derivatives of $f$. Note that $h_{ij}(0)=f_{ij}(0)=\la_i \delta_{ij}$.

By \eqref{eq:Gaussinv} we now have $g^{ls}h_{kl}h_{sj}=-\tR(\partial_k, \xi, \xi, \partial_j)$ which gives the equations $g^{ls} \Omega f_{kl}f_{sj} + f_{kl} \bar{\G}_{ab,c} \rho^{abcl}_{j}+ f_{sj} \bar{\G}_{ab,c} \rho^{abcs}_{k} = \Phi_{kj}$, where the functions $\Omega = (\bar{g}_{a,n+1}\xi^a)^2, \rho^{abcl}_{j}, \rho^{abcs}_{k}$ and $\Phi_{kj}$ depend only on $y_i, f$ and the first derivatives of $f$ (note that $\Omega(0)=1$). Differentiating this equation $m \ge 1$ times with respect to $y_{i_1}, \dots, y_{i_m}, \; i_1, \dots, i_m =1, \dots, n$, we obtain $g^{ls} \Omega (f_{kli_1\dots i_m}f_{sj} + f_{kl}f_{sji_1\dots i_m})+ f_{kli_1\dots i_m} \bar{\G}_{ab,c} \rho^{abcl}_{j}+ f_{sji_1\dots i_m} \bar{\G}_{ab,c} \rho^{abcs}_{k} = \Phi_{kji_1\dots i_m}$, where $\Phi_{kji_1\dots i_m}$ depend only on $y_i, f$ and the derivatives of $f$ up to order $m+1$. Evaluating at $x$ (that is, at $y=0$) we get $f_{kji_1\dots i_m}(0) (\la_j + \la_k) = (\Phi_{kji_1\dots i_m})_{|y=0}$ (no summation by $j, k$). If we know all the partial derivatives of $f$ at $y=0$ up to order $m+1$, we can then compute all the partial derivatives $f_{i_1\dots i_mi_{m+1}i_{m+2}}(0)$ of order $m+2$, provided $\la_{i_p}+\la_{i_q} \ne 0$ for at least one pair of $p,q=1, \dots, m+2$ with $p \ne q$. But if the latter condition is violated, then $i_1,\dots, i_m,i_{m+1},i_{m+2} > n-d_0$, and so $f_{i_1\dots i_mi_{m+1}i_{m+2}}(0)=0$, as $f(0, \dots, 0, y_{n-d_0+1}, \dots, y_n)=0$ from the above. It follows that all the partial derivatives of all orders of the function $f$ at $y=0$ are uniquely determined, and so by analyticity, $M$ is uniquely determined by $\xi(x)$ and the shape operator $S$ at $x$.
\end{proof}

This completes the proof of Proposition~\ref{p:solv}.

\section{Einstein hypersurfaces in \texorpdfstring{$\SL(3)/\SO(3)$}{SL(3)/SO(3)} and \texorpdfstring{$\SU(3)/\SO(3)$}{SU(3)/SO(3)}}
\label{s:su3/so3}

In this section, we consider Einstein hypersurfaces in the symmetric spaces $\SL(3)/\SO(3)$ and $\SU(3)/\SO(3)$ (see Proposition~\ref{p:singH=0}\eqref{it:eithersl3so3}). Recall that by Proposition~\ref{p:allsing}, the Gauss map is singular at every point of $M$, and in particular we have $C=0$ (so that $M$ and $\tM$ have the same Einstein constants), by Lemma~\ref{l:nons}\eqref{it:sing}. We start with the following proposition.

\begin{proposition} \label{p:su3so3gen}
Let $\tM$ be one of the spaces $\SL(3)/\SO(3)$ or $\SU(3)/\SO(3)$, and let $M$ be an Einstein $C^4$-hypersurface in $\tM$. We have the following.
  \begin{enumerate}[label=\emph{(\alph*)},ref=\alph*]
      \item \label{it:su3so3allaal}
      The vector field $\xi$ is singular at every point of $M$. Up to relabelling, we have $\al_3=\al_4=0$, $\al_1=\al_2=\al \ne 0$ \emph{(}where $2\al$ is the Einstein constant of $\tM$\emph{)}, and $\la_3=\la_4=0, \; \la_1 \la_2=\al$.

      \item \label{it:fibered}
      The hypersurface $M$ is foliated by totally geodesic $2$-dimensional submanifolds of $\tM$ of constant curvature $\frac43 \al$. At every point $x \in M$, the tangent space to the leaf is the subspace $L_0=\Span(X_3,X_4) \subset T_xM$ of vectors commuting with $\xi(x)$. Moreover, the vector field $\xi$ is parallel along the leaves \emph{(}so that $M$ is of conullity $2$\emph{)}. 

      \item \label{it:la1nela2}
      If $\tM=\SL(3)/\SO(3)$, then $M'=M$. If $\tM=\SU(3)/\SO(3)$, then $M'$ is the set of points of $M$ at which $\la_1 \ne \la_2$, and moreover, $M'$ has a nonempty intersection with every leaf defined in~\eqref{it:fibered}.

      \item \label{it:cp2}
      The hypersurface $M$ is locally isometric to $\bc P^2$ when $\al >0$ (respectively, to $\bc H^2$ when $\al < 0$), with the same Einstein constant $2\al$.
  \end{enumerate}
\end{proposition}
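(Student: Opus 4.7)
Part~\eqref{it:su3so3allaal} is the main technical step. Proposition~\ref{p:allsing} and Lemma~\ref{l:nons}\eqref{it:sing} give $C=0$ and, at every $x\in M'$, at least one index $s$ with $\la_s=\al_s=0$. Since $\xi$ lies in some Cartan subalgebra $\ga\subset\Ker\tR_\xi$ of dimension $2$, one always has $\rk\tR_\xi\le 3$. The crucial sub-step is to rule out $\xi$ being regular in $\gp$: if it were, then $\Ker\tR_\xi=\ga$, only one $\al_i$ would vanish, and the other three would correspond to the three positive roots of the restricted root system, which is of type $\mathrm{A}_2$ with every root of multiplicity $1$ in both the compact and noncompact cases. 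Combining Lemma~\ref{l:Rijk}\eqref{it:Rijknon0eta} (applicable because $\beta_1+\beta_2=\pm\beta_3$ and Lemma~\ref{l:rr}\eqref{it:rr1} then yields a nonzero curvature triple), equation \eqref{eq:etala}, and Gauss~\eqref{eq:Gaussal} with $C=0$ and $\la_4=0$, produces an overdetermined polynomial system in $\la_1,\la_2,\la_3,H$ and the components of $\xi$; its inconsistency is checked by elimination along the lines of the $\mathrm{A}_2$ part of the proof of Proposition~\ref{p:allsing}. Hence $\xi$ lies on a root hyperplane $\beta^\perp$, which in $\mathrm{A}_2$ forces $\Ker\tR_\xi=\ga\oplus\gp_\beta$ of dimension~$3$; the two remaining positive roots, interchanged by reflection through $\beta^\perp$, produce equal values of $\<\cdot,\xi\>^2$, so $\al_1=\al_2=\al\ne 0$ and $\al_3=\al_4=0$, with $2\al$ identified as the Einstein constant of $\tM$ by taking the trace of $\tR_\xi$ (which equals $\oRic(\xi,\xi)$). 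Equation \eqref{eq:Gaussal} then gives $\la_1\la_2=\al$ and $\la_3,\la_4\in\{0,H\}$; the second alternative is excluded by an argument in the style of Lemma~\ref{l:lak0H}\eqref{it:nolakH}, invoking Lemma~\ref{l:rr}\eqref{it:rrnot2hyper}. The conclusion extends from $M'$ to all of $M$ by density.

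For part~\eqref{it:fibered}, identify $L_0=\Span(X_3,X_4)$ with $\ga'\oplus\gp_\beta\subset\gp$, where $\ga'=\ga\cap\xi^\perp$. This $L_0$ is the centralizer of $\xi$ in $T_xM$, hence a Lie triple system whose exponential is a totally geodesic $2$-submanifold of $\tM$; a direct computation with the $\mathrm{A}_2$ bracket relations (and the standard normalization) yields constant sectional curvature $\tfrac{4}{3}\al$. Integrability of $L_0$ as a distribution on $M$ and total geodesicity of its leaves inside $M$ follow from Codazzi~\eqref{eq:codazzi} together with~\eqref{eq:dG2nR}: since $\la_3=\la_4=\al_3=\al_4=0$, the relevant coefficients $\G_{ij}^{\hphantom{i}k}$ all vanish. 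The field $\xi$ is parallel along every leaf because $\on_Y\xi=-SY=0$ for $Y\in L_0\subset V_0$, and the conullity equals $\dim L_0=2$.

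Part~\eqref{it:la1nela2}: given the structure from~\eqref{it:su3so3allaal}, the multiplicity profiles of $S$ and of $\tR_\xi$ are locally constant except possibly where $\la_1=\la_2$. The identity $\la_1\la_2=\al$ forces any such coincidence to satisfy $\la_1=\la_2=\pm\sqrt{\al}$, which is impossible when $\al<0$; hence $M'=M$ for $\tM=\SL(3)/\SO(3)$. For $\tM=\SU(3)/\SO(3)$ the coincidence locus is closed, so $M'$ is open and dense. To show that $M'$ meets every leaf $F_0$, one uses that $\xi$ is parallel along $F_0$ and that $M$ is analytic (it is a minimal submanifold of the analytic space $\tM$; compare Lemma~\ref{l:analyt}): if $\la_1=\la_2$ held on all of $F_0$, analytic propagation would spread the condition to all of $M$, making $S$ globally have the spectrum $\{0,0,\sqrt\al,\sqrt\al\}$ (or with minus signs), which combined with Codazzi~\eqref{eq:codazzi} and the already established structure of $L_0$ would produce a shape-operator profile incompatible with the conullity-$2$ foliation.

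For part~\eqref{it:cp2} I would compute the intrinsic curvature of $M$ from Gauss~\eqref{eq:Gaussinv} and the known ambient curvature of the symmetric space: on the nullity planes it matches the leaf value $\tfrac{4}{3}\al$; on horizontal and mixed planes the explicit $\mathrm{A}_2$ structure constants together with $\la_1\la_2=\al$ determine every component. Since the nullity leaves are totally geodesic and $\xi$ is parallel along them, $M$ is locally a Riemannian submersion $\pi\colon M\to B^2$; one constructs an almost complex structure $J$ on $M$ by combining a $\pi/2$ rotation in each vertical plane with a compatible rotation on the horizontal distribution, matched through the O'Neill integrability tensor, verifies $\nabla J=0$, and checks that the holomorphic sectional curvature is constant equal to $4\al$. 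This identifies $(M,g)$ locally with the complex space form of that curvature, i.e., with $\bc P^2$ when $\al>0$ and with $\bc H^2$ when $\al<0$; the Einstein constant is then $2\al$, which agrees with that of $\tM$. The hardest piece will be this last identification (the construction and verification of the Kähler structure); the polynomial-elimination step ruling out regular $\xi$ in~\eqref{it:su3so3allaal} and the analytic-propagation argument in~\eqref{it:la1nela2} are the other genuinely nontrivial parts, with the regularity-class issue requiring some care.
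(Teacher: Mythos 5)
Your skeleton for parts (a) and (b) is essentially the paper's: exclude a regular $\xi$, deduce $\al_1=\al_2=\al\ne0$, $\al_3=\al_4=\la_3=\la_4=0$, $\la_1\la_2=\al$, and get the totally geodesic nullity foliation from the vanishing of the relevant $\G_{ij}^{\hphantom{i}k}$. Two caveats. First, the exclusion of regular $\xi$ needs no heavy elimination, and your plan to run it ``along the lines of the $\mathrm{A}_2$ part of Proposition~\ref{p:allsing}'' cannot be taken literally, because that elimination uses \eqref{eq:regXkH}, which is only available under the \emph{nonsingular} Gauss map hypothesis; here, once $\la_4=0$ gives $H=\la_1+\la_2+\la_3$ and hence $\la_i\la_j=\frac12(\al_i+\al_j-\al_k)$, equation \eqref{eq:etala} together with $\eta_1+\eta_2+\eta_3=0$ and the identity $\eta_1^3+\eta_2^3+\eta_3^3=3\eta_1\eta_2\eta_3$ collapses to $4\eta_1\eta_2\eta_3=0$, an immediate contradiction. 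Second, your direct argument in (b) (leaf totally geodesic in $M$ together with $S_{|L_0}=0$ gives totally geodesic in $\tM$) is fine and a bit more elementary than the paper's appeal to codimension reduction \cite{DSV}. In (d), the submersion/O'Neill route is a plausible alternative to the paper's explicit matrix verification of \eqref{eq:Kahler}, but the holomorphic sectional curvature must come out as $\frac43\al$ (the leaf curvature, since $L_0$ is $J$-invariant and totally geodesic; with $\Ric=2\al$ in complex dimension $2$ one has $2\al=\frac32c$), not $4\al$.

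The genuine gap is in part (c), in the claim that $M'$ meets every leaf. Your argument rests on (i) $M$ being analytic because it is minimal, and (ii) ``analytic propagation'' of $\la_1=\la_2$ from a leaf to all of $M$. Both steps fail: here $H=\la_1+\la_2$ with $\la_1\la_2=\al>0$ in the $\SU(3)/\SO(3)$ case, so $M$ is \emph{not} minimal and Lemma~\ref{l:analyt} is unavailable (the paper only establishes analyticity of $M$ a posteriori, via the special Legendrian surface); and even for an analytic $M$, a function can vanish on a $2$-dimensional leaf of a $4$-manifold without vanishing identically, so nothing propagates. Moreover the terminal contradiction (``a shape-operator profile incompatible with the conullity-$2$ foliation'') is never exhibited — a constant $S$ with spectrum $\{0,0,\sqrt{\al},\sqrt{\al}\}$ is not obviously impossible. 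The paper's actual argument is local and concrete: on an open subset of a leaf where $\la_1=\la_2=\la=\pm\sqrt{\al}$, Codazzi gives $\tR(Y,X,X,\xi)=-\la\<\nabla_XX,Y\>$ for $Y$ tangent and $X$ orthogonal to $L_0$, while differentiating $\tR(Y,\xi,\xi,X)=0$ along $X$ gives $\tR(Y,X,X,\xi)=+\la\<\nabla_XX,Y\>$; hence $\tR(Y,X,X,\xi)=0$, and since the symmetric bilinear form $(T_1,T_2)\mapsto\tR(Y,T_1,T_2,\xi)$ also vanishes when $T_2\in L_0\oplus\br\,\xi$, it vanishes identically, contradicting Lemma~\ref{l:rr}\eqref{it:rrnot2hyper}. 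You need some argument of this kind; the analyticity route does not close the gap.
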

\begin{proof}
By Lemma~\ref{l:nons}\eqref{it:sing} we have $\la_i=\al_i=0$ for some $i \in \{1, 2, 3, 4\}$ (that is, $\tR_\xi$ and $S$ have a non-trivial common kernel), and $C=0$.

For assertions~\eqref{it:su3so3allaal} and \eqref{it:fibered}, take an arbitrary $x \in M'$. First suppose that $\xi$ is regular at $x$, and denote $\ga \subset T_x\tM$ the Cartan subalgebra containing $\xi$. We have $\dim \ga = 2$, and the restricted root system $\Delta$ has type $\mathrm{A}_2$. Then $T_xM$ is the direct orthogonal sum of the three one-dimensional root spaces corresponding to the root vectors $\beta_1, \beta_2, \beta_3$ (which we can choose in such a way that $\beta_1+\beta_2+\beta_3=0$), and the one-dimensional space $\ga'$. Up to relabelling, we have $\al_4=0$ and $\al_i=\ve \<\beta_i, \xi\>^2$ for $i=1,2,3$. As $\xi$ is regular, $\al_1, \al_2, \al_3 \ne 0$ (and so $\la_4=0$). Note that we cannot have $\al_1=\al_2=\al_3$, as $\beta_1+\beta_2+\beta_3=0$. Suppose two of them are equal, say $\al_2=\al_3 \ne \al_1$. Then by \eqref{eq:Gaussal} we have $-\la_2^2+H\la_2=-\la_3^2+H\la_3=\al_2$. If $\la_2 \ne \la_3$, then $\la_2 + \la_3 =H$, and so from $\sum_{i=1}^{4} \la_i = H$ we obtain $\la_1=0$, which by \eqref{eq:Gaussal} implies $\al_1=0$, a contradiction. It follows that if  $\al_2=\al_3$, then $\la_2=\la_3$, and so the vectors $X_i, \, i=1,2,3$, can be chosen in the corresponding root spaces $\gp_{\beta_i}$ (if $\al_1, \al_2, \al_3$ are pairwise unequal, this is also true, as the root spaces are one-dimensional). Furthermore, we have $\la_1+\la_2+\la_3=H$, and so from \eqref{eq:Gaussal} we obtain $\la_i\la_j=\frac12 (\al_i+\al_j-\al_k)$ for $\{i,j,k\}=\{1,2,3\}$. Note that $\tR(X_1,X_2,X_3,\xi) \ne 0$ by Lemma~\ref{l:rr}\eqref{it:rr1} (as $\Delta$ is of type $\rA_2$), and that the numbers $\eta_i=\<\beta_i,\xi\>,\; i=1,2,3$, satisfy $\eta_1+\eta_2+\eta_3=0$ (and $\al_i=\ve \eta_i^2$). Then equation~\eqref{eq:etala} of Lemma~\ref{l:Rijk} takes the form $\eta_1 \eta_2 \eta_3 = - \eta_1^3 - \eta_2^3 - \eta_3^3$. But as $\eta_3=-\eta_1-\eta_2$ we get $\eta_1\eta_2(\eta_1+\eta_2)=0$, and so one of $\eta_i, \, i=1,2,3$, is zero contradicting the fact that $\al_i \ne 0$ for $i=1,2,3$.

It follows that $\xi$ is singular. Then the restriction of $\tR_\xi$ to $T_xM$ has two different eigenvalues: up to relabelling, we have $\al_3=\al_4=0, \; \al_1=\al_2=\al \ne 0$ (note that $\al$ is constant, as $2\al$ is the Einstein constant of $\tM$). By Lemma~\ref{l:nons}\eqref{it:sing} we have $\la_i=0$ for at least one of $i=3,4$; let $\la_4=0$. Then $\la_1+\la_2+\la_3=H$, and if $\la_3 \ne 0$, we get $\la_3 = H \ne 0$ from \eqref{eq:Gaussal}, and so $\la_1+\la_2=0$ which gives $\al=0$ by \eqref{eq:Gaussal}, a contradiction. It follows that $\la_3=0$ and then $\la_1\la_2 = \al$.

From \eqref{eq:dG2nR} we obtain
\begin{equation} \label{eq:su3so3G1}
  \G_{ij}^{\hphantom{i}k}=0, \qquad \G_{kr}^{\hphantom{k}i}= \al^{-1} \la_k \tR(X_i, X_k, X_r, \xi),
\end{equation}
where $k,r \in \{1,2\}, \, i,j \in \{3,4\}$. It follows that the distribution $\Span(X_3, X_4)$ is integrable, with the leaves being totally geodesic in $M$. Moreover, $\xi$ is parallel along this distribution, and the subspace $\Span(X_3,X_4,\xi)$ is $\tR$-invariant. Then by \cite[Theorem~1]{DSV}, through every point of $M$ passes a $3$-dimensional totally geodesic submanifold of $\tM$ which locally intersects $M$ by the corresponding totally geodesic leaf. It follows that $M$ is locally foliated by $2$-dimensional leaves which are totally geodesic in $\tM$, and hence are of constant curvature. This curvature equals the sectional curvature $\kappa$ of $\tM$ along $\Span(X_3, X_4)$. A direct calculation shows that $\kappa=\frac43 \al$ (one can use the basis from the proof of assertion~\eqref{it:cp2} below). This proves assertions~\eqref{it:su3so3allaal} and \eqref{it:fibered} at the points of $M'$, and hence on the whole of $M$, as $M' \subset M$ is open and dense, and by continuity, the distribution $L_0$ is $2$-dimensional, integrable and totally geodesic on $M$.

For assertion~\eqref{it:la1nela2}, we note that by assertions~\eqref{it:su3so3allaal}, at all points of $M$, the set of eigenvalues of $\tR_\xi$ has cardinality $2$. The set of eigenvalues of $S$ can have cardinalities $2$ or $3$ depending on whether $\la_1=\la_2$ or not, respectively. The former cannot occur when $\tM=\SL(3)/\SO(3)$, as $\al < 0$, which proves the first statement. To prove the second statement it suffices to show that the (relative) interior of the intersection of the set on which $\la_1=\la_2$ with every leaf defined in assertion~\eqref{it:fibered} is empty. Assume that we have $\la_1=\la_2 =\la$ on an open subset $\mU_0$ of one of the leaves. Then by assertions~\eqref{it:su3so3allaal}, on $\mU_0$ we have $\la= \pm \sqrt{\al}$, which is a nonzero constant (note that $\mU_0$ may lie in $M \setminus M'$, so that we cannot apply equations from Section~\ref{s:pfgen} directly). On a neighbourhood of $\mU_0$ in $M$, choose a vector field $Y$ tangent to the distribution $L_0$ (to the leaves) and a vector field $X$ orthogonal to $L_0$. Note that $\tR(Y,\xi)=0$, and that at the points of $\mU_0$ we have $SX=\la X$. From Codazzi equation on $\mU_0$ we get $\tR(Y,X,X,\xi)= -\la\<\nabla_XX,Y\>$. On the other hand, differentiating the equation $\tR(Y, \xi,\xi,X)=0$ along $X$ we obtain $\tR(Y,X,X,\xi)= \la\<\nabla_XX,Y\>$ at the points of $\mU_0$. It follows that $\tR(Y,X,X,\xi)=0$. As the bilinear form $(T_1,T_2) \mapsto \tR(Y,T_1,T_2,\xi)$ is symmetric on $\gp \times \gp$ by the first Bianchi identity, and is zero when $T_2 \in L_0 \oplus \br \xi$, it must then be identically zero, in contradiction with Lemma~\ref{l:rr}\eqref{it:rrnot2hyper}.

To prove assertion~\eqref{it:cp2} we can assume that $x \in M'$, as the inner metric of $M$ is analytic by \cite[Theorem~5.2]{DTK}. Define an almost Hermitian structure $J$ on $T_xM$ in such a way that $JX_1=X_2$ and $JX_3=X_4$ (and we will specify it further a little later). The K\"{a}hler  condition $\nabla J=0$ is then equivalent to $\G_{i3}^{\hphantom{i}2}+\G_{i4}^{\hphantom{i}1} = \G_{i4}^{\hphantom{i}2} -\G_{i3}^{\hphantom{i}1}=0$, for $i=1,2,3,4$. By the first equation of \eqref{eq:su3so3G1}, this is trivially satisfied for $i=3,4$. For $i=1,2$, from the second equation of \eqref{eq:su3so3G1} we get
\begin{equation}\label{eq:Kahler}
\tR(X_4, X_1, X_1, \xi)+\tR(X_3, X_1, X_2, \xi)=\tR(X_4, X_1, X_2, \xi)-\tR(X_3, X_1, X_1, \xi)=0,
\end{equation}
and another two equations obtained from these two by simultaneously interchanging $X_1$ with $X_2$, and $X_3$ with $X_4$; they are equivalent to~\eqref{eq:Kahler}, as for $i=3,4$, we have $\tR(X_i, X_1, X_2, \xi) = \tR(X_i, X_2, X_1, \xi)$ by the first Bianchi identity, and $\tR(X_i, X_1, X_1, \xi) = -\tR(X_i, X_2, X_2, \xi)$ since $\tM$ is Einstein.

In case $\tM=\SU(3)/\SO(3)$, the space $\gp$ can be identified with $\ri \cdot \Sym^0(3,\br)$, where $\Sym^0(3,\br)$ is the space of real, symmetric $(3 \times 3)$-matrices with zero trace, and with the inner product defined by $\<P_1, P_2\>=\Re \Tr(P_1P_2^*)$ (which has Einstein constant $3$). Up to the adjoint action of the isotropy group $\SO(3)$, we choose an orthonormal basis for $T_x\tM$ such that $\xi=\frac{1}{\sqrt{6}}\ri \,  \diag(-2,1,1), \; X_3= \frac{1}{\sqrt{2}}\ri \,  \diag(0,1,-1), \; X_4=\frac{1}{\sqrt{2}} \ri \,  \left(\begin{smallmatrix} 0 & 0 & 0 \\ 0 & 0 & 1 \\0 & 1 & 0 \end{smallmatrix}\right), \; X_1=\frac{1}{\sqrt{2}} \ri \,  \left(\begin{smallmatrix} 0 & 1 & 0 \\ 1 & 0 & 0 \\0 & 0 & 0 \end{smallmatrix}\right), \; X_2=\frac{1}{\sqrt{2}} \ri \,  \left(\begin{smallmatrix} 0 & 0 & 1 \\0 & 0 & 0 \\ 1 & 0 & 0 \end{smallmatrix}\right)$ (this particular choice specifies $J$). Then a direct calculation shows that \eqref{eq:Kahler} is satisfied, and so $M$ is K\"{a}hler, and that the holomorphic sectional curvature is constant. In case $\tM=\SL(3)/\SO(3)$, the argument is word-by-word the same, up to replacing $\ri$ by $1$. This proves assertion~\eqref{it:cp2}.
\end{proof}

\section{\texorpdfstring{$\SU(3)/\SO(3)$}{SU(3)/SO(3)} and Legendrian geometry}
\label{s:su3so3}

Let $\tM=\SU(3)/\SO(3)$ (the \emph{Wu manifold}). We use the standard isometric embedding of $\SU(3)/\SO(3)$ in the Euclidean space of $3 \times 3$ complex matrices $\mathrm{M}_{3\times 3} (\bc)$ defined by $Q \mapsto QQ^t$ for $Q \in \SU(3)$ \cite[\S~5]{Kob}. The image of this embedding (which we will identify with $\SU(3)/\SO(3)$) is the set $\Sym(3, \bc) \cap \SU(3)$ of all symmetric matrices in $\SU(3)$, with the left action of the group $\SU(3)$ defined by $\mL_gP=gPg^t$, for $g \in \SU(3), \; P \in \SU(3)/\SO(3)$. The Euclidean inner product on $\mathrm{M}_{3\times 3} (\bc)$ defined by $\<Q_1,Q_2\>=\Re \Tr (Q_1Q_2^*)$ induces the metric on $\SU(3)/\SO(3)$ with the Einstein constant $\frac34$, so that $\al=\frac38$ by Proposition~\ref{p:su3so3gen}\eqref{it:su3so3allaal}. 

In the space $\br^6=\bc^3$ with the standard Euclidean inner product and the complex structure, consider the unit sphere $S^5$. For $Z \in S^5$, define the subset $S_Z$ of $\SU(3)/\SO(3)=\Sym(3, \bc) \cap \SU(3)$ in one of the two equivalent ways:
\begin{equation}\label{eq:S_Z}
  S_Z= \{x \in \Sym(3, \bc) \cap \SU(3) \, | \, x \overline{Z} = Z\} = \{QQ^t \, | \, Q \in \SU(3), \, Q (1,0,0)^t=Z\}.
\end{equation}
For $Z=(1,0,0)^t$, we have $S_Z= \{\left(\begin{smallmatrix} 1 & 0 \\ 0 & hh^t \end{smallmatrix}\right) \, | \, h \in \SU(2) \}$, which is a totally geodesic $2$-sphere $\SU(2)/\SO(2) \subset \SU(3)/\SO(3)$. It is easy to see that $S_{gZ}=gS_Zg^t$ for $g \in \SU(3)$, and so $S_{Z_1}=S_{Z_2}$ if and only if $Z_1=\pm Z_2$. Therefore we obtain a two-to-one map $Z \mapsto S_Z$ from $S^5$ to the space of totally geodesic $2$-spheres in $\SU(3)/\SO(3)$. It is not hard to see that this map is surjective, so that the space of (non-oriented) totally geodesic $2$-spheres of $\SU(3)/\SO(3)$ can be identified with the homogeneous space $\br P^5=\SU(3)/(\SU(2) \times \mathbb{Z}_2)$ (note that $\SU(3)/\SO(3)$ has another family of totally geodesic, congruent, $2$-dimensional submanifolds of constant positive Gauss curvature, but they are real projective planes, and have a different curvature).

Given a $2$-dimensional surface $F^2 \subset S^5$, the above construction gives a hypersurface in $\SU(3)/\SO(3)$ foliated by totally geodesic $2$-spheres. We prove the following (Theorem~\ref{th:main}\eqref{thit:wu}).

\begin{proposition} \label{p:su3so3Leg}
  Let $M \subset \SU(3)/\SO(3)$ be a connected $C^4$-hypersurface. Then $M$ is Einstein if and only if $M$ is an open domain of $M_{F^2}=\cup_{Z \in F^2} S_Z$, where $F^2 \subset S^5$ is a special Legendrian surface with the Legendrian angle $\frac{\pi}{2} $.
\end{proposition}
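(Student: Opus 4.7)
The plan is as follows. First, Proposition~\ref{p:su3so3gen} tells us that an Einstein hypersurface $M\subset\SU(3)/\SO(3)$ carries a $2$-dimensional totally geodesic nullity foliation whose leaves are $2$-spheres of constant Gauss curvature $\tfrac{4}{3}\alpha=\tfrac12$ in the ambient Wu manifold. I identify these leaves with the subsets $S_Z$ of Example~\ref{ex:inWu}: by a direct root-space computation in $\gp=\ri\cdot\Sym^0(3,\br)$, and using that (up to congruence) the only totally geodesic $2$-spheres of $\SU(3)/\SO(3)$ of the correct Gauss curvature are the $S_Z$, every leaf is of the form $S_Z$ for some $Z\in S^5$, uniquely determined up to a sign. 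This produces a smooth map $\pi\colon M\to S^5/\{\pm 1\}$ sending $x$ to the $Z$ with $T_xS_Z=L_0(x)$, whose fibres are the $2$-dimensional leaves. On any simply connected piece of $M$, $\pi$ lifts to a map into $S^5$ whose image is an immersed surface $F^2$ such that $M$ coincides locally with $M_{F^2}$.

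The heart of the proof is to translate the Einstein structure at a point $x\in M$ with $Z=\pi(x)$ into the Legendrian and $\tfrac{\pi}{2}$-special conditions on $F^2$ at $Z$. By $\SU(3)$-equivariance of the construction, it suffices to work at the base point $x=I$ with $Z=(1,0,0)^t$, where the orthonormal basis $\{\xi,X_1,X_2,X_3,X_4\}$ of $\gp$ is displayed explicitly in the proof of Proposition~\ref{p:su3so3gen}(d), and $L_0=\Span(X_3,X_4)$ is the tangent space to $S_Z$. I compute $d\pi(X_i)\in T_ZS^5\subset\bc^3$ for $i=1,2$ by differentiating the defining equation $x\bar Z=Z$ along a curve in $\tM$ through $x=I$ with initial velocity $X_i$. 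A direct calculation in this basis yields $\<d\pi(X_i),\ri Z\>=0$, which is the Legendrian condition, and shows that $\det_\bc(Z,d\pi(X_1),d\pi(X_2))$ is purely imaginary, with modulus determined by $\la_1\la_2=\alpha>0$ from Proposition~\ref{p:su3so3gen}(a). Purely imaginary means exactly $\beta=\tfrac{\pi}{2}$, so $F^2$ is $\tfrac{\pi}{2}$-special Legendrian.

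For the converse, starting from a $\tfrac{\pi}{2}$-special Legendrian surface $F^2\subset S^5$, the Legendrian condition ensures transversality of the leaves $S_Z$ as $Z$ varies in $F^2$, so $M_{F^2}=\bigcup_{Z\in F^2}S_Z$ is a smooth $4$-dimensional hypersurface whose foliation by the $S_Z$ is a totally geodesic nullity foliation by construction. Running the calculation of the middle paragraph in reverse gives explicit formulae for $\xi$ and the shape operator $S$: the totally geodesic nature of the $S_Z$ forces $\la_3=\la_4=0$, the $\tfrac{\pi}{2}$-specialness forces $\la_1\la_2=\alpha$, and Gauss equation~\eqref{eq:Gaussinv} then verifies that $M_{F^2}$ is Einstein with $C=0$ and the same Einstein constant as $\tM$. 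The main obstacle lies in the middle paragraph: carefully differentiating the quadratic constraint $x\bar Z=Z$ and doing the bookkeeping needed to match the root-space decomposition of $\gp$ with the complex-linear structure on $\bc^3=\br^6$, while keeping track of the $\SU(3)$-equivariance throughout.
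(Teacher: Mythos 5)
Your proposal is correct and follows essentially the same route as the paper: use the nullity foliation from Proposition~\ref{p:su3so3gen} to attach to each point a sphere $S_{Z(x)}$, differentiate the defining relation $x\overline{Z}=Z$ at the normalized base point to compute $Z_{X_i}$, observe that the Legendrian condition comes out of the construction while the Einstein relation $\la_1\la_2=\al$ is exactly equivalent to $\det_{\bc}(Z, Z_{X_1}, Z_{X_2})\in\ri\br$, and reverse the computation for the converse. The only cosmetic difference is that you identify the leaves with the $S_Z$ by appealing to the classification of totally geodesic $2$-spheres of the given curvature, whereas the paper produces $Z(x)=g(1,0,0)^t$ directly from the group element normalizing $\xi(x)$; also note that the Einstein condition fixes the \emph{argument} (purely imaginary) of $\det_{\bc}(Z,Z_{X_1},Z_{X_2})$, not its modulus.
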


A $2$-dimensional surface $F^2 \subset S^5 \subset \br^6=\bc^3$ is called \emph{Legendrian} if at every point $Z \in F^2$, the vector $\ri Z$ is normal to $F^2$. This is equivalent to the fact that the cone over $F^2$ is \emph{Lagrangian}, that is, its tangent space at every point other than the origin is totally real. Given a Legendrian surface $F^2 \subset S^5$, the \emph{Legendrian angle function} $\beta: F^2 \to \br/ \pi\mathbb{Z}$ is defines as follows: at $Z \in F^2$, choose an orthonormal basis $Z_1, Z_2$ for $T_ZF^2$; then $\det_{\bc}(Z, Z_1, Z_2)=\pm e^{i\beta(Z)}$. It is well-known (and is easy to see) that a Legendrian surface in $S^5$ is \emph{minimal} if and only if the (Lagrangian) cone over it is minimal in $\br^6$. Such a cone is minimal if and only if the Legendrian angle function is constant \cite[Proposition~2.17]{HL}, \cite{Mor}. Legendrian surfaces with constant Legendrian angle $\beta$ are called \emph{$\beta$-special Legendrian}.

\begin{remark} \label{rem:spLe} 
  As a side remark, note that if a surface $F^2 \subset S^5$ is $\beta$-special Legendrian and $g \in \mathrm{U}(3)$, then the surface $gF^2 \subset S^5$ is also $(\beta-\ri \ln \det g)$-special Legendrian. In this sense, special Legendrian surfaces with different (constant) Legendrian angles are ``indistinguishable" from the point of view of Legendrian geometry. However, the hypersurfaces in $\SU(3)/\SO(3)$ obtained from them using the above construction will have different geometric properties. In particular, choosing $g \in \mathrm{U}(3)$ with $\det g = \pm \ri$ we obtain a hypersurface in $\SU(3)/\SO(3)$ which is \emph{parallel to $M$}; it will no longer be Einstein, but will be minimal (and still foliated by totally geodesic spheres). The above construction can therefore be used for finding examples of minimal hypersurfaces in $\SU(3)/\SO(3)$ starting from $0$-special Legendrian surfaces in $S^5$.
\end{remark}

There are many special Legendrian surfaces in $S^5$. A local description can be obtained from \cite[Theorem~2.3]{HL}. If $z_j=x_j + \ri y_j, \, j=1,2,3$, are complex coordinates in $\bc^3$, a $(0)$-special Lagrangian submanifold is locally given by the graph $y_j=\frac{\partial}{\partial x_j}\Phi$, where $\Phi=\Phi(x_1,x_2,x_3)$ satisfies the equation $\Delta \Phi = \det (\mathrm{Hess} \, \Phi)$. To obtain a Legendrian surface we additionally require that $\Phi$ is $2$-homogeneous (then the resulting Lagrangian submanifold is a cone) and take its intersection with $S^5$. Note that to obtain a Legendrian surface with $\beta = \frac{\pi}{2} $ we will then need to rotate by some $g \in \mathrm{U}(3)$ with $\det g = \pm \ri$ (or alternatively require that $\Phi$ satisfies equation~$(2.4)'$ of \cite{HL}, with $\Im$ replaced by $\Re$). In the compact case, a special Legendrian surface of genus zero is a totally geodesic (Legendrian) $S^2 \subset S^5$ \cite{Yau}, but there exist infinite families of pairwise non-congruent special Legendrian $S^1$-invariant tori \cite{Has}.

The following are two simplest examples of Einstein hypersurfaces in $\SU(3)/\SO(3)$ (which are the orbits of certain subgroups of $\SU(3)$).
\begin{example} \label{ex:totrealS2}
  Let $F^2 \subset S^5$ be a totally geodesic Legendrian sphere with the Legendrian angle $\frac{\pi}{2} $. If $z_j=x_j + \ri y_j, \, j=1,2,3$, are complex coordinates in $\bc^3$ we can define $F^2$ as the unit sphere in the subspace $x_1=x_2=x_3=0$. The corresponding Einstein hypersurface $M \subset \SU(3)/\SO(3)$ is the set of matrices $gg^t, \; g \in \SU(3)$, such that the first column of $g$ is purely imaginary. This hypersurface is in fact the $\SO(3)$-orbit of a single totally geodesic $2$-sphere $S_Z \subset \SU(3)/\SO(3)$, where $Z=(\ri, 0, 0)^t$, or even of a single geodesic (or alternatively, an open domain of the set of matrices in $\SU(3)/\SO(3)=\Sym(3, \bc) \cap \SU(3)$ having an eigenvalue $-1$).
\end{example}

\begin{example} \label{ex:totrealtorus}
  Let $F^2 \subset S^5$ be the Legendrian torus (with the Legendrian angle $\frac{\pi}{2} $) given by $Z(u,v)=\frac{1}{\sqrt{3}} \ri (e^{iu},e^{iv},e^{-i(u+v)}), \; u,v \in \br$. Then, again, the corresponding Einstein hypersurface $M \subset \SU(3)/\SO(3)$ is the set of matrices $gg^t, \; g \in \SU(3)$, such that the first column of $g$ is $Z(u,v)$. This hypersurface is the orbit of a single totally geodesic $2$-sphere $S_Z \subset \SU(3)/\SO(3)$, with $Z=\frac{1}{\sqrt{3}}\ri (1, 1, 1)^t$, under the action of the maximal torus of diagonal matrices of $\SU(3)$.
\end{example}

\begin{proof}[Proof of Proposition~\ref{p:su3so3Leg}]
Suppose $M$ is Einstein. Let $x \in M$. By Proposition~\ref{p:su3so3gen}\eqref{it:fibered}, there is an open domain of a totally geodesic $2$-sphere $S^2=S^2(x) \subset \SU(3)/\SO(3)$ lying in $M$ and passing through $x$ such that $\xi(x)$ commutes with $T_xS^2$ (moreover, by Proposition~\ref{p:su3so3gen}\eqref{it:la1nela2}, every such sphere passes through a point of $M'$, and so we can assume $x \in M'$). By a left action $\mL_{g^{-1}}, \; g \in \SU(3)$, we can map $x$ to the identity matrix $I \in \SU(3)/\SO(3)$; note that $T_I \SU(3)/\SO(3)=\Sym(3, \bc) \cap \su(3) = \ri \cdot \Sym^0(3, \br)$, where $\Sym^0$ is the space of symmetric matrices with trace zero. Then $d\mL_{g^{-1}}\xi(x) \in T_I \SU(3)/\SO(3)$ is a unit vector with the three-dimensional centraliser. Up to the isotropy action by an element of $\SO(3)$, we can assume that $d\mL_{g^{-1}}\xi(x)=\frac{1}{\sqrt{6}}\ri \diag(-2,1,1)=\frac{1}{\sqrt{6}}\ri (I-3(1,0,0)^t(1,0,0))$. We obtain $x=gg^t$ and $\xi(x)=\frac{1}{\sqrt{6}}\ri (x-3(g(1,0,0)^t)(g(1,0,0)^t)^t)$. Note that such an element $g \in \SU(3)$ is defined up to the right multiplication by an element of $\mathrm{O}(2) \subset \SO(3)$ of the form $\left(\begin{smallmatrix} \det A & 0 \\ 0 & A \end{smallmatrix}\right), \; A \in \mathrm{O}(2)$, and so we obtain a unit vector $Z(x)=g(1,0,0)^t$ in $\bc^3=\br^6$ defined up to a sign. As we work locally, we choose and fix a particular sign such that $Z(x)$ is continuous. Then we have $\xi(x)=\frac{1}{\sqrt{6}}\ri (x-3ZZ^t)$ and $S^2(x)=S_{Z(x)}$. Moreover, as $g \in \SU(3)$, we obtain $x\overline{Z}=gg^t\overline{g}(1,0,0)^t=g(1,0,0)^t=Z$ by~\eqref{eq:S_Z}. We now have $d\mL_{g^{-1}}T_xM=\{ \ri \left(\begin{smallmatrix} 0 & u^t \\ u & P \end{smallmatrix}\right) \, | \, u \in \br^2, \, P \in \Sym^0(2, \br) \}$. The centraliser of $d\mL_{g^{-1}}\xi(x)$ in $d\mL_{g^{-1}}T_xM$ is the $2$-dimensional space $d\mL_{g^{-1}}T_xS^2(x)=\{ \ri \left(\begin{smallmatrix} 0 & 0 \\ 0 & P \end{smallmatrix}\right) \, | \, P \in \Sym^0(2, \br) \}$. Therefore at $x \in M$, we get $\Span(X_3,X_4)=T_xS^2(x)=\{ \ri g\left(\begin{smallmatrix} 0 & 0 \\ 0 & P \end{smallmatrix}\right)g^t \, | \, P \in \Sym^0(2, \br) \}$ and $\Span(X_1,X_2)=\{ \ri g\left(\begin{smallmatrix} 0 & u^t \\ u & 0 \end{smallmatrix}\right)g^t \, | \, u \in \br^2\}$.

For $X \in T_xM$ denote $Z_X=(dZ)_x X$. We have $\xi=\frac{1}{\sqrt{6}}\ri (x-3ZZ^t)$, and so $\on_X \xi=\pi_{T_x\SU(3)/\SO(3)}\frac{1}{\sqrt{6}}\ri (X-3Z_XZ^t-3ZZ^t_X)$, where $\pi_{T_x\SU(3)/\SO(3)}$ is the orthogonal projection. Without loss of generality (as both the assumption and the conclusion of the proposition are $\SU(3)$-invariant) we can now take $x=I, \, g= I$ and $Z(x)=(1,0,0)^t$. Then $\ri X$ is a real symmetric matrix, and so $\pi_{T_I\SU(3)/\SO(3)}(\ri X)=0$. Moreover, differentiating the equation $x\overline{Z}= Z$ (see \eqref{eq:S_Z}) we get $X(1,0,0)^t+\overline{Z_X}=Z_X$. Then for $X \in \Span(X_3,X_4)$ we have $X(1,0,0)^t=0$ and so $Z_X \in \br^3$. Then $\on_X \xi=-\frac{3}{\sqrt{6}}\ri (Z_XZ^t+ZZ^t_X)$. As $\on_X \xi=0$ for $X \in \Span(X_3,X_4)$ we obtain $Z_X = 0$. For $j=1,2$, let $u_j \in \br^2$ be defined by $X_j = \ri \left(\begin{smallmatrix} 0 & u_j^t \\ u_j & 0 \end{smallmatrix}\right)$. Then we have $\ri \left(\begin{smallmatrix} 0 \\ u_j \end{smallmatrix}\right) + \overline{Z_{X_j}} = Z_{X_j}$, and so $Z_{X_j}=\left(\begin{smallmatrix} r_j \\ \frac12 \ri u_j + w_j \end{smallmatrix}\right)$ for $j=1,2$, where $r_j \in \br, \, w_j \in \br^2$. As $Z(x)$ is unit, we get $r_j=0$ which gives $Z_{X_j}=\left(\begin{smallmatrix} 0 \\ \frac12 \ri u_j + w_j \end{smallmatrix} \right)$. It follows that $Z_{X_1}, Z_{X_2}$ are $(\br$-)linearly independent (so that locally the set of points $Z(x)$ is a $2$-dimensional surface $F^2 \subset S^5$ with the tangent space spanned by $Z_{X_1}, Z_{X_2}$; note that $X_1$ and $X_2$ are well-defined as $x \in M'$). Moreover, we have $Z \perp Z_{X_j}, \ri Z_{X_j}$ for $j=1,2$, and so the surface $F^2 \subset S^5$ is Legendrian. Then its tangent space $\Span(Z_{X_1}, Z_{X_2})$ is totally real which gives $\<u_1,w_2\>=\<u_2,w_1\>$.

For $j=1,2$, from the above we get $\on_j \xi=-\frac{3}{\sqrt{6}}\ri \left(\begin{smallmatrix} 0 & w_j^t \\ w_j & 0 \end{smallmatrix}\right)$. We compute $\<X_j,X_k\>=2\<u_j,u_k\>$ and $\<\on_j \xi, X_k\>=-\sqrt{6}\<u_k,w_j\>$ for $j,k=1,2$. It follows that the eigenvalues $\la_1$ and $\la_2$ of $S$ are the roots of the equation $\det (\sqrt{6}(w_1\,|\,w_2)^t(u_1\,|\,u_2)-2 (u_1\,|\,u_2)^t(u_1\,|\,u_2) \la)=0$. From Proposition~\ref{p:su3so3gen}\eqref{it:su3so3allaal}, the Einstein condition for $M$ gives $\la_1\la_2=\al$, where $2\al=\frac34$, the Einstein constant of $\SU(3)/\SO(3)$. We obtain $4\det(w_1\,|\,w_2)=\det(u_1\,|\,u_2)$ which is equivalent to $\det(w_1+\frac12 \ri u_1\,|\,w_2+\frac12 \ri u_2) \in \ri \br$. It follows that $\det(Z\,|\,Z_{X_1}\,|\,Z_{X_2}) \in \ri \br$. This condition means that the Legendrian angle of the Legendrian surface $F^2$ is constant and is equal to $\frac{\pi}{2}$, as required.

To prove the converse, we start with a Legendrian surface $F^2 \subset S^5$ with a constant Legendrian angle $\frac{\pi}{2}$ and revert the argument (at the regular points of $M_{F^2}$ --- see Remark~\ref{rem:Legsing}).
\end{proof}

\begin{remark} \label{rem:Legsing}
  Note that $M_{F^2}$, being an immersed hypersurface at all the ``good" points, has singularities on every single leaf $S_Z$. To see this, take an arbitrary point $Z$ on a $\frac{\pi}{2}$-special Legendrian surface $F^2 \subset S^5$. Up to an action by an element $g \in \SU(3)$ one can assume that $Z=(1,0,0)^t, \, Z_1=\partial_{u_1}Z=(0,1,0)^t, \, Z_2=\partial_{u_2}Z=(0,0,\ri)^t$ (for some local coordinates $(u_1,u_2)$ on a neighbourhood of $Z \in F^2$). Then $S_Z= \{\left(\begin{smallmatrix} 1 & 0 \\ 0 & A \end{smallmatrix}\right) \, | \, A \in \SU(2) \cap \Sym(2, \bc)\}$, and one can see that all the points on the circle lying in $S_Z$ and given by $A=\left(\begin{smallmatrix} \cos \theta & \ri \sin \theta \\ \ri \sin \theta & \cos \theta \end{smallmatrix}\right), \, \theta \in \br$, are singular points of $M_{F^2}$. Indeed, differentiating the equation $x \overline{Z} = Z$ from \eqref{eq:S_Z} at the points of this circle we find that the vector $(\cos\frac{\theta}{2} \partial_{u_1} + \sin\frac{\theta}{2} \partial_{u_2}) x$ is tangent to the leaf $S_Z$ (cf. Proposition~\ref{p:Tzrk}\eqref{it:Tz2sing}).
\end{remark}


\section{\texorpdfstring{$\SL(3)/\SO(3)$}{SL(3)/SO(3)} and equi-affine geometry}
\label{s:sl3so3}

Our approach in the case when $\tM=\SL(3)/\SO(3)$ is somewhat similar to that for $\tM=\SU(3)/\SO(3)$. We use the standard isometric embedding of $\SL(3)/\SO(3)$ in the pseudo Euclidean space of pairs of $3 \times 3$ real matrices $\mathrm{M}=\mathrm{M}_{3\times 3} (\br) \oplus \mathrm{M}_{3\times 3} (\br)$ defined by $Q \mapsto (QQ^t,(QQ^t)^{-1})$ for $Q \in \SL(3)$. The image of this embedding (which we will identify with $\SL(3)/\SO(3)$) is the set $\{(P,P^{-1}) \, | \, P \in \Sym^+(3, \br) \cap \SL(3)\}$, where $\Sym^+(3, \br)$ is the set of positive definite, symmetric, $3 \times 3$ real matrices. We define a pseudo Euclidean inner product on $\mathrm{M}$ by $\<(P_1,P_2),(Q_1,Q_2)\>= -\frac12\Tr(P_1Q_2+Q_1P_2)$, for $P_1,P_2,Q_1,Q_2 \in \mathrm{M}_{3\times 3} (\br)$. The left action of the group $\SL(3)$ on $\mathrm{M}$ defined by $\mL_g(P_1,P_2)=(gP_1g^t,g^{-1t}P_2g^{-1})$, for $g \in \SL(3), \; (P_1,P_2) \in \mathrm{M}$, is an action by isometries. Moreover, the submanifold $\SL(3)/\SO(3)=\{(P,P^{-1}) \, | \, P \in \Sym^+(3, \br) \cap \SL(3)\}$ is the orbit of $(I,I)$, the induced metric is Riemannian, and is in fact the symmetric space metric on $\SL(3)/\SO(3)$, with the Einstein constant $-\frac34$. Note that $T_{(I,I)}\SL(3)/\SO(3)=\{(T,-T)\, | \, T \in \Sym^0(3, \br)\}$.

In the space $\br^6=\br^3 \oplus \br^3$, introduce with the para-Hermitian inner product given by $\<(p,q),(p,q)\>'=\<p,q\>$, for $p,q \in \br^3$, relative to the para-complex structure $J(p,q)=(p,-q)$. Define the ``unit sphere" $S^5_-$ and the subset $S_{(p,q)}$ of $\SL(3)/\SO(3), \; (p,q) \in S^5_-$, as follows:
\begin{equation}\label{eq:S_pq}
\begin{gathered}
    S_{(p,q)}= \{x = (P,P^{-1}) \, | \, P \in \Sym^+(3, \br) \cap \SL(3), \, P q = p\}, \\
   \text{where } (p,q) \in S^5_- =\{(p,q) \, | \, p,q \in \br^3,\, \<p,q\>=1\}\subset \br^6.
\end{gathered}
\end{equation}
The action of the group $\SL(3)$ given by $\mL_g(p,q)=(gp,g^{-1t}q),\; g \in \SL(3)$, is isometric relative to $\ip'$, is transitive on $S^5_-$, and makes $S^5_-$ into the pseudo Riemannian homogeneous space $\SL(3)/\SL(2)$. For $p=q=(1,0,0)^t$, we have $S_{(p,q)}= \{(P,P^{-1}) \, | \, P=\left(\begin{smallmatrix} 1 & 0 \\ 0 & hh^t \end{smallmatrix}\right) \, h \in \SL(2) \}$, which is a totally geodesic hyperbolic plane $\SL(2)/\SO(2) \subset \SL(3)/\SO(3)$ of curvature $-\frac12$. Furthermore, $S_{\mL_g(p,q)}=gS_{(p,q)}g^t$ for $g \in \SL(3)$, and so $S_{(p_1,q_1)}=S_{(p_2,q_2)}$ if and only if $(p_1,q_1)=\pm (p_2,q_2)$. We obtain a two-to-one map $(p,q) \mapsto S_{(p,q)}$ from $S_-^5$ to the space of totally geodesic hyperbolic planes of curvature $-\frac12$ of $\SL(3)/\SO(3)$. This map is surjective, so that the space of (non-oriented) totally geodesic hyperbolic planes of $\SL(3)/\SO(3)$ of curvature $-\frac12$ can be identified with the homogeneous space $S_-^5/\mathbb{Z}_2$ (note that $\SL(3)/\SO(3)$ also has totally geodesic hyperbolic planes of curvature $-\frac18$).

Given a $2$-dimensional surface $F^2 \subset S_-^5$, the above construction produces a hypersurface in $\SL(3)/\SO(3)$ foliated by totally geodesic hyperbolic planes of curvature $-\frac12$. We prove the following proposition which concludes the proof of Theorem~\ref{th:main}\eqref{thit:ncwu} (note that our definition of $S_{(p,q)}$ is effectively equivalent to how $S_{(p,\rho)}$ is defined in Example~\ref{ex:inncWu}).

\begin{proposition} \label{p:sl3so3Tz}
    Let $M \subset \SL(3)/\SO(3)$ be a connected $C^4$-hypersurface. Then $M$ is Einstein if and only if $M$ is an open domain of $M_{F^2}=\cup_{(p,q) \in F^2} S_{(p,q)}$, where $F^2 \subset S_-^5$ is a special para-Legendrian surface.
\end{proposition}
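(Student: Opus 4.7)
The plan is to follow the scheme of the proof of Proposition~\ref{p:su3so3Leg}, with the complex geometry of $\bc^3$ replaced by the para-complex geometry of $\br^3\oplus\br^3$ throughout: the complex structure $\ri$ is replaced by $J(p,q)=(p,-q)$; the sphere $S^5\subset\bc^3$ by the pseudo-sphere $S^5_-\subset\br^6$; the embedding $Q\mapsto QQ^t$ of $\SU(3)/\SO(3)$ by the embedding $Q\mapsto(QQ^t,(QQ^t)^{-1})$ of $\SL(3)/\SO(3)$ into $\mathrm{M}$; and the unit vector $Z\in S^5$ by the pair $(p,q)\in S^5_-$.

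For the direct implication, assume $M$ is Einstein and take $x\in M'=M$ (Proposition~\ref{p:su3so3gen}\eqref{it:la1nela2}). By Proposition~\ref{p:su3so3gen}\eqref{it:fibered}, the leaf of the nullity foliation through $x$ is (an open subset of) a totally geodesic hyperbolic plane of curvature $\tfrac43\al=-\tfrac12$ inside $\tM$, hence of the form $S_{(p,q)}$ for some $(p,q)\in S^5_-$. Using the transitive left action of $\SL(3)$ I would move $x$ to $(I,I)$ and then use the isotropy $\SO(3)$-action to normalise $\xi(x)$ to the singular vector $\tfrac{1}{\sqrt{6}}(T_0,-T_0)$ with $T_0=I-3e_1e_1^t=\diag(-2,1,1)$. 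The element $g\in\SL(3)$ realising this reduction is unique up to right multiplication by the subgroup $\bigl\{\left(\begin{smallmatrix}\det B & 0\\ 0 & B\end{smallmatrix}\right):B\in\mathrm{O}(2)\bigr\}$, and so the pair $(p(x),q(x))=(ge_1,g^{-t}e_1)$ is well defined up to a common sign; a continuous choice gives a local $C^2$-map $x\mapsto(p(x),q(x))$, whose image is a $2$-surface $F^2\subset S^5_-$.

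A direct $\SL(3)$-equivariant computation (the analogue of $\xi=\tfrac{1}{\sqrt{6}}\ri(x-3ZZ^t)$ in the compact case) yields
\[
\xi(x)=\tfrac{1}{\sqrt{6}}\bigl(P-3pp^t,\,-P^{-1}+3qq^t\bigr),\qquad Pq=p,
\]
at $x=(P,P^{-1})\in M$. Differentiating the constraints $Pq=p$ and $\<p,q\>=1$ along a tangent vector $X\in T_xM$, working at the normalised point $x=(I,I)$ with $p=q=e_1$, and invoking the fact (Proposition~\ref{p:su3so3gen}\eqref{it:fibered}) that $\on_X\xi=0$ when $X\in L_0$, I would show that $p_X=q_X=0$ when $X$ is tangent to the leaf, while for $X\in\Span(X_1,X_2)$ the pair $(p_X,q_X)$ is linearly independent, and the first coordinates of both $p_X$ and $q_X$ vanish. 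Undoing the normalisation, this last property says precisely that the covector $\rho$ (identified with $q$) vanishes on the $\br^3$-projection of $T_{(p,\rho)}F^2$, i.e.\ $F^2$ is para-Legendrian.

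To pin down the special para-Legendrian condition, I would write the remaining two $S$-eigenvectors as $X_j=(T_j,-T_j)$ with $T_j=\left(\begin{smallmatrix}0&u_j^t\\u_j&0\end{smallmatrix}\right)$, $u_j\in\br^2$, compute the matrix $\<\on_{X_j}\xi,X_k\>$ in terms of the pieces $v_j,w_j\in\br^2$ of $p_{X_j}$ and $q_{X_j}$ under the above normalisation, and then invoke the Einstein relation $\la_1\la_2=\al$ from Proposition~\ref{p:su3so3gen}\eqref{it:su3so3allaal}. After eliminating the $u_j$ via the para-Legendrian condition, this condition becomes a linear relation between $\det(v_1,v_2)$ and $\det(w_1,w_2)$, and, after undoing the $\SL(3)$-normalisation, it is exactly the vanishing of the $3$-form $\omega\oplus\omega^\ast$ on $\Span\bigl((p,\rho),(p_{X_1},\rho_{X_1}),(p_{X_2},\rho_{X_2})\bigr)$ that defines a special para-Legendrian surface in Example~\ref{ex:inncWu}. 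The converse implication is obtained by reversing the same chain of computations at the regular points of $M_{F^2}$ (cf.~Remark~\ref{rem:Legsing}). The main obstacle will be the meticulous sign and convention bookkeeping forced by the indefinite ambient metric on $\mathrm{M}$ and by the pseudo-Riemannian, para-complex structure of $S^5_-$, together with the coordinate-free identification of the resulting determinantal relation with the $3$-form $\omega\oplus\omega^\ast$.
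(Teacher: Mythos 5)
Your proposal follows essentially the same route as the paper's proof: normalise by the $\SL(3)$-action and the isotropy to read off the pair $(p,q)=(ge_1,g^{-t}e_1)$ up to sign, compute $\xi=\tfrac{1}{\sqrt6}(P-3pp^t,-P^{-1}+3qq^t)$, differentiate the constraint $Pq=p$ to show $p_X=q_X=0$ along the nullity leaves and to extract the para-Legendrian condition, and then convert the Einstein relation $\la_1\la_2=\al$ from Proposition~\ref{p:su3so3gen} into the determinantal identity $\det(p,p_1,p_2)+\det(q,q_1,q_2)=0$, reversing the computation for the converse. This matches the paper's argument step for step, including the caveat about singular points of $M_{F^2}$ in the converse direction.
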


A regular surface $F^2 \subset S_-^5 \subset \br^6$ is called \emph{para-Legendrian} if at every point $(p,q) \in F^2$, the vector $J(p,q)$ is (para-Hermitian) normal to $F^2$. Let $(u_1,u_2)$ be a local regular parametrisation of a para-Legendrian surface $F^2 \subset S_-^5 \subset \br^6$, so that $F^2$ is locally given by $(u_1,u_2) \mapsto (p(u_1,u_2),q(u_1,u_2))$, where $p(u_1,u_2),q(u_1,u_2) \in \br^3$. The surface $F^2$ is called \emph{special  para-Legendrian} if
  \begin{equation}\label{eq:pq}
    \<p,q\>=1, \quad \<p,\partial_{u_i}q\>=\<q,\partial_{u_i}p\>=0, \, i=1,2, \quad \det(p,\partial_{u_1}p,\partial_{u_2}p)+ \det(q,\partial_{u_1}q, \partial_{u_2}q) =0
  \end{equation}
(note that the first equation means that $F^2 \subset S^5_-$, and the second one, that $F^2$ is para-Legendrian). It is easy to see that this definition of special para-Legendrian surfaces is equivalent to the ``equi-affine" definition in Example~\ref{ex:inncWu}, in which we view $\br^6$ as the cotangent bundle over $\br^3$. The connection between para-Legendrian and equi-affine geometries is well known in the literature (see, for example \cite{CFG}, \cite{CLS}). We complement the statement of Proposition~\ref{p:sl3so3Tz} with the following fact (some parts of which are also known).

\begin{proposition}\label{p:Tzrk}
In the above notation, let $F^2 \subset S_-^5$ be a special para-Legendrian surface locally given by a regular parametrisation $(u_1,u_2) \mapsto (p(u_1,u_2),q(u_1,u_2))$, so that  $p(u_1,u_2),q(u_1,u_2) \in \br^3$ satisfy \eqref{eq:pq}. Then the following holds.
    \begin{enumerate}[label=\emph{(\alph*)},ref=\alph*]
        \item \label{it:nodp0}
        We have $dp(u) \ne 0$ at all points $u=(u_1,u_2)$.

        \item \label{it:eitherh}
        If $\rk (dp)= 1$ on a neighbourhood of a point $u=(u_1,u_2)$, then up to reparametrisation and an action of $\SL(3)$ one has $p=(1,u_1,0)^t,\, q=(1,0,u_2)$. The corresponding Einstein hypersurface $M \subset M_{F^2} \subset \SL(3)/\SO(3)$ is a domain of an Einstein solvmanifold of codimension $1$, as in Example~\ref{ex:solv} and Section~\ref{s:solv}.

        \item \label{it:orTz} 
        If $\rk (dp)= 2$ on a neighbourhood of a point $u=(u_1,u_2)$, then the surface $p=p(u_1,u_2)$ is a domain of a \emph{proper indefinite affine sphere} \emph{(Tzitz\`{e}ica surface)} in $\br^3$ centred at the origin, of centro-affine curvature $-1$ \emph{(}equivalently, relative to an arbitrary Euclidean inner product on $\br^3$, one has $K\<p,n\>^{-4} = -1$, where $K$ is the Gauss curvature and $n$ is a unit normal of the surface $p(u_1,u_2)$\emph{)}. Then $q=q(u_1,u_2)$, and hence $F^2$, are uniquely determined by~\eqref{eq:pq}.

        \item \label{it:Tz2sing}
        If $\rk (dp)= 2$ at some point $u=(u_1,u_2)$, then the leaf $S_{(p(u),q(u))}$ contains singular points of $M_{F^2}$. 
\end{enumerate}
\end{proposition}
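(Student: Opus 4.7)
The plan is to treat the four parts in order, each building on the normal-form reductions of the previous. For~\eqref{it:nodp0}, if $dp(u_0)=0$ then regularity of $F^2$ forces $\partial_{u_1}q$ and $\partial_{u_2}q$ to be linearly independent at $u_0$, and the last equation of~\eqref{eq:pq} collapses to $\det(q,\partial_{u_1}q,\partial_{u_2}q)=0$, placing $q\in\Span(\partial_{u_1}q,\partial_{u_2}q)$; combined with $\<p,\partial_{u_i}q\>=0$ this gives $\<p,q\>=0$, contradicting $\<p,q\>=1$. For~\eqref{it:eitherh}, one reparametrises so $p=p(u_1)$, and the special condition again forces $\det(q,\partial_{u_1}q,\partial_{u_2}q)=0$; the argument of~\eqref{it:nodp0} rules out $q\in\Span(\partial_{u_i}q)$, so $\partial_{u_1}q\parallel\partial_{u_2}q$, and a further change of coordinates yields $p=p(v_1),\,q=q(v_2)$ with $\<p(v_1),q(v_2)\>=1$ identically. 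Differentiating in each variable then shows $p'(v_1)$ is orthogonal to every $q(v_2)$ and $q'(v_2)$ to every $p(v_1)$; regularity of $F^2$ forces both linear spans to be $2$-dimensional, so each of $p$ and $q$ traces a line in $\br^3$. An element of $\SL(3)$ followed by rescaling then brings $(p,q)$ to the canonical form $p=(1,u_1,0)^t,\,q=(1,0,u_2)^t$. A direct computation using~\eqref{eq:S_pq} shows that the resulting $M_{F^2}$ is cut out in $\SL(3)/\SO(3)$ by $(P_{11}-1)P_{33}=P_{13}^2$; parametrising $P=gg^t$ with $g\in\rA\rN$ upper triangular, this becomes $g_{11}^2+g_{12}^2=1$, and after conjugation by a permutation matrix in $\SL(3)$ it coincides with the image of the codimension one Einstein subgroup $\exp(\xi^\perp)\subset\rA\rN$ for $\xi\in\ga$ orthogonal to the mean curvature vector $A_H$, as in Example~\ref{ex:solv}.

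For~\eqref{it:orTz}, the rank $2$ hypothesis makes $p$ an immersion into $\br^3$, and $\<q,\partial_{u_i}p\>=0$ together with $\<p,q\>=1$ uniquely force $q=n/\<p,n\>$ at points where $\<p,n\>\ne 0$, with $n$ a Euclidean unit normal. The key computation uses $\det(p,\partial_{u_1}p,\partial_{u_2}p)=\<p,n\>\cdot \mathcal{A}$, where $\mathcal{A}=\|\partial_{u_1}p\times\partial_{u_2}p\|$ is the induced Euclidean area element, together with the Weingarten identity $\partial_{u_1}n\times\partial_{u_2}n=K\cdot\partial_{u_1}p\times\partial_{u_2}p$, to deduce $\det(q,\partial_{u_1}q,\partial_{u_2}q)=K\mathcal{A}/\<p,n\>^3$. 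The special condition in~\eqref{eq:pq} therefore becomes $\<p,n\>+K/\<p,n\>^3=0$, i.e., $K/\<p,n\>^4=-1$, the Tzitz\`eica equation identifying $p$ with a proper indefinite affine sphere of centro-affine curvature $-1$ centred at the origin; since $q=n/\<p,n\>$, the datum $q$, and hence $F^2$, is uniquely determined by $p$.

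For~\eqref{it:Tz2sing}, view $M_{F^2}$ as the image of $\Phi\colon\tilde F\to\SL(3)/\SO(3)$ with $\tilde F=\{((p,q),P):(p,q)\in F^2,\,P\in S_{(p,q)}\}$. A point is singular exactly when some nonzero direction $(c_1,c_2)\in T_{(p,q)}F^2$ produces motion tangent to the leaf, i.e., $P(c_1\partial_{u_1}q+c_2\partial_{u_2}q)=c_1\partial_{u_1}p+c_2\partial_{u_2}p$. Differentiating the para-Legendrian condition shows $\<\partial_{u_i}p,\partial_{u_j}q\>=-\<q,\partial_{u_iu_j}p\>$ is symmetric in $(i,j)$, forcing the $2\times 2$ matrix with columns $(\partial_{u_i}q)$ expressed in the basis $(\partial_{u_j}p)$ to be symmetric; combined with the special condition, which gives determinant $-1$, the residual $\SL(3)$-action together with reparametrisation normalises to $(p,q)=(e_1,e_1)$ with $\partial_{u_1}p=e_2,\,\partial_{u_2}p=e_3,\,\partial_{u_1}q=e_2,\,\partial_{u_2}q=-e_3$. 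The leaf is then parametrised by $P=\left(\begin{smallmatrix}1&0&0\\0&a&c\\0&c&b\end{smallmatrix}\right)$ with $ab-c^2=1$ and $a,b>0$, and the singular condition reduces to the linear system $(a-1)c_1-cc_2=cc_1-(b+1)c_2=0$ admitting a nontrivial solution, i.e., $(a-1)(b+1)=c^2$. Combined with $ab-c^2=1$ this simplifies to $a=b$ and $a^2-c^2=1$, a nonempty $1$-dimensional curve on the leaf (the axis of a hyperbolic isometry preserving the $1$-jet of $F^2$); every such leaf therefore contains singular points of $M_{F^2}$.

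The principal obstacle is the identification in part~\eqref{it:orTz}: extracting the Tzitz\`eica equation from the special para-Legendrian condition requires careful tracking of the conormal map $p\mapsto n/\<p,n\>$ through the Gauss curvature identity, but once this is in place, parts~\eqref{it:eitherh} and~\eqref{it:Tz2sing} reduce to normal-form computations and elementary linear algebra.
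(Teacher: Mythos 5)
Your proposal is correct and follows essentially the same route as the paper in all four parts: the same contradiction with $\<p,q\>=1$ via the last equation of \eqref{eq:pq} for \eqref{it:nodp0} and \eqref{it:eitherh}, the same conormal identification $q=n/\<p,n\>$ and Gauss-curvature computation for \eqref{it:orTz}, and the same normalisation $p=q=e_1$, $\partial_{u_1}p=\partial_{u_1}q=e_2$, $\partial_{u_2}p=-\partial_{u_2}q=e_3$ for \eqref{it:Tz2sing}, your linear system $(a-1)(b+1)=c^2$, $ab-c^2=1$ locating exactly the paper's singular geodesic $a=b=\cosh\theta$, $c=\sinh\theta$. The only differences are presentational — in \eqref{it:eitherh} you identify the solvmanifold through the implicit equation $(P_{11}-1)P_{33}=P_{13}^2$ and a permutation conjugation, whereas the paper writes down the suitably conjugated solvable group directly and reads off the subgroup $\{b=1\}$ — and both versions check out.
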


Let $\br^3$ be an equi-affine space equipped with a flat affine connection $\nabla^0$ and a volume form $\omega$. Given a regular $C^2$-surface $V^2 \subset \br^3$ and a arbitrary transversal vector field $\eta$ along $V^2$, one defines, for vector fields $X,Y$ tangent to $V^2$, the decompositions $\nabla^0_X \eta= -SX+f\eta,\; \nabla^0_X Y= \nabla_X Y+h(X,Y)\eta$, where $SX$ and $\nabla_X Y$ are tangent to $V^2$. Assuming the quadratic form $h$ is non-degenerate (this condition does not depend on the choice of $\eta$), there exists a unique, up to a sign, choice of the vector field $\eta$ such that $f=0$ and $|h(X,X)h(Y,Y)-h(X,Y)^2|=(\omega(X,Y,\eta))^2$, for any vector fields $X,Y$ tangent to $V^2$. Such vector field $\eta$ is called the \emph{affine normal} (the \emph{Blaschke normal}). A surface $V^2$ (with a non-degenerate $h$) is called a \emph{proper affine sphere} (a \emph{Tzitz\`{e}ica surface}), if all its affine normals pass through one point in $\br^3$ (called the \emph{centre}). Placing the centre at the origin and choosing an arbitrary Euclidean inner product on $\br^3$, one obtains that $V^2$ is a proper affine sphere if and only if $K\<p,n\>^{-4} = c$, for some constant $c \ne 0$, where $p$ is the position vector, $K$ is the Gauss curvature and $n$ is a unit normal \cite{Tz}, \cite[Theorem~II.5.11]{NS}; a proper affine sphere is \emph{indefinite}, if $c < 0$ (equivalently, $h$ is indefinite). Note that in general the function $K\<p,n\>^{-4}$ is $\SL(3)$-invariant; it is called the \emph{centro-affine curvature}.

There exist many proper affine spheres of centro-affine curvature $-1$. We mention another description, similar to that for special Legendrian surfaces given in Section~\ref{s:su3so3}. Choose Cartesian coordinates $(x_1,x_2,x_3)$ in $\br^3$ such that $\omega(\partial_{x_1}, \partial_{x_2}, \partial_{x_3})=1$. Let $\Phi=\Phi(x_1,x_2,x_3)$ be a $2$-homogeneous function which satisfies the equation $\det (\mathrm{Hess} \, \Phi)=-8$. Then the equation $\Phi(x_1,x_2,x_3)=1$ defines a proper (indefinite) affine sphere of centro-affine curvature $-1$.

\begin{example} \label{ex:affsph}
  The simplest examples of proper affine spheres of centro-affine curvature $-1$ are the hyperboloid of one sheet $x_1^2+x_2^2-x_3^2=1$ and the ``hexenhut" $x_3(x_1^2+x_2^2)=\frac{2}{3\sqrt{3}}$. One can show that the Einstein hypersurface in $\SL(3)/\SO(3)$ constructed from the hyperboloid $x_1^2+x_2^2-x_3^2=1$ is the orbit of a totally geodesic hyperbolic plane of $\SL(3)/\SO(3)$ (in fact, of a single geodesic) under the action of the subgroup $\SO(2,1) \subset \SL(3)$. Any ruled affine sphere of centro-affine curvature $-1$ is given by $(u_1,u_2) \mapsto \gamma'(u_1)+u_2 \gamma(u_1)$, where the curve $\gamma \subset \br^3$ is parametrised in such a way that $\det(\gamma, \gamma', \gamma'')=1$ \cite[Theorem~III.5.5]{NS}. Note that any ``generic" $C^2$-curve has such a parametrisation; we also note that the corresponding Einstein hypersurface $M$ may not even be $C^3$-regular, if we choose a non-$C^3$ curve $\gamma$ in this construction (see Remark~\ref{rem:Ck}).
\end{example}

\begin{proof}[Proof of Proposition~\ref{p:sl3so3Tz}]
Suppose $M$ is Einstein. Let $x \in M$. By Proposition~\ref{p:su3so3gen}\eqref{it:fibered}, there is a domain of a totally geodesic hyperbolic plane $H^2(x) \subset \SL(3)/\SO(3)$ of curvature $-\frac12$ lying in $M$ and passing through $x$ such that $\xi(x)$ commutes with $T_xH^2(x)$ (recall that by Proposition~\ref{p:su3so3gen}\eqref{it:la1nela2}, all the points of $M$ are regular). Then there exists $g \in \SL(3)$ such that $x=\mL_g(I,I)=(gg^t,g^{-1t}g^{-1})$ and (up to a sign) we have $\xi(x)=(d\mL_g)_{(I,I)}(\frac{1}{\sqrt{6}}(\diag(-2,1,1),\diag(2,-1,-1)))$ $= \frac{1}{\sqrt{6}}(g\diag(-2,1,1)g^t,g^{-1t}\diag(2,-1,$ $-1)g^{-1})$. Note that such an element $g \in \SL(3)$ is defined up to the right multiplication by an element of $\SO(2) \subset \SO(3)$ of the form $\left(\begin{smallmatrix} \det A & 0 \\ 0 & A \end{smallmatrix}\right), \; A \in \mathrm{O}(2)$. We obtain the pair $(p,q) \in \br^3 \oplus \br^3, \,  q=g^{-1t}(1,0,0)^t, \, p=g(1,0,0)^t$ (note that the pair $(p,q)$ is defined up to a sign; we can locally choose and fix a particular sign such that $(p(x),q(x))$ is continuous). So we obtain $\xi(x)=\frac{1}{\sqrt{6}}(gg^t-3pp^t,-g^{-1t}g^{-1}+3qq^t)$. Moreover, in the notation of Proposition~\ref{p:su3so3gen} we have $\Span(X_3,X_4)=T_xS_{(p,q)}=\{(gQg^t, -g^{-1t}Qg^{-1})\}$, where $Q=\left(\begin{smallmatrix} 0 & 0 \\ 0 & P \end{smallmatrix}\right), \, P \in \Sym^0(2, \br)$, and $\Span(X_1,X_2)=\{(gQg^t, -g^{-1t}Qg^{-1})\}$, where
$Q=\left(\begin{smallmatrix} 0 & v^t \\ v & 0 \end{smallmatrix}\right), \, v \in \br^2$.

For $X \in T_xM$ denote $p_X=(dp)_x X, \, q_X=(dq)_x X$. Then $\on_X \xi=\pi_{T_x\SL(3)/\SO(3)}\frac{1}{\sqrt{6}} (X(gg^t)-3p_Xp^t-3pp^t_X, -X(g^{-1t}g^{-1})+3q_Xq^t+3qq^t_X)$, where $\pi_{T_x\SL(3)/\SO(3)}$ is the orthogonal projection. Without loss of generality (as both the assumption and the conclusion of the proposition are $\SL(3)$-invariant) we can now take $x=(I,I), \, g= I$, and $p(x)=q(x)=(1,0,0)^t$. Projecting to $T_{(I,I)}\SL(3)/\SO(3)=\{(T,-T)\, | \, T \in \Sym^0(3, \br)\}$ we get $\on_X \xi=-\frac{3}{2\sqrt{6}} (T, -T)$, where $T=(p_X+q_X)(1,0,0)^t+ (1,0,0) (p_X+q_X)^t$ (note that $\<p_X+q_X,(1,0,0)^t\>=0$ as $\<p,q\>=1$). Now suppose $X \in \Span(X_3,X_4)$. Then $X=\{(Q, -Q)\}$, where $Q=\left(\begin{smallmatrix} 0 & 0 \\ 0 & P \end{smallmatrix}\right), \, P \in \Sym^0(2, \br)$. Moreover, we have $\on_X \xi= 0$, as $S_{(p,q)}$ is totally geodesic, and so $p_X+q_X=0$. On the other hand, differentiating the equation $gg^tq=p$ along $X$ we get $Q(1,0,0)^t+q_X=p_X$, and so $q_X=p_X$. It follows that $p_X=q_X=0$, for $X \in \Span(X_3,X_4)$. Next, for $j=1,2$, we have $X_j=\{(Q_j, -Q_j)\}$, where $Q=\left(\begin{smallmatrix} 0 & v_j^t \\ v_j & 0 \end{smallmatrix}\right), \, v_j \in \br^2$ (note that $v_1,v_2$ are linearly independent). Differentiating the equation $gg^tq=p$ along $X_j, \, j=1,2$, we get $Q_j(1,0,0)^t+q_j=p_j$ (where we abbreviate the subscript $X_j$ to $j$), and so $p_j=q_j+\left(\begin{smallmatrix} 0 \\ v_j \end{smallmatrix}\right)$. But $\<(p_j+q_j),(1,0,0)^t\>=0$, and so $q_j=\left(\begin{smallmatrix} 0 \\ w_j \end{smallmatrix}\right), \, p_j=\left(\begin{smallmatrix} 0 \\ v_j+w_j \end{smallmatrix}\right)$, for some $w_1,w_2 \in \br^2$ (note that this proves that the subset $\{(p,q)\} \subset \br^6$ is a 2-dimensional regular surface $F^2$, as $p_X=q_X=0$ for $X \in \Span(X_3,X_4)$ and $(p_1,q_1),(p_2,q_2)$ are linearly independent). It follows that $\<p,q_j\>=\<p_j,q\>=0$ for $j=1,2$, as required by \eqref{eq:pq} (and these equations are invariant under the action of $\SL(3)$ given by $\mL_g(p,q)=(gp,g^{-1t}q)$). Furthermore, we have $\on_j \xi = -\la_j X_j$ for $j=1,2$, and so $-\frac{3}{2\sqrt{6}}(2w_j+v_j)=\la_j v_j$. From Proposition~\ref{p:su3so3gen}\eqref{it:su3so3allaal}, the Einstein condition for $M$ gives $\la_1\la_2=\al$, where $2\al=-\frac34$, the Einstein constant of $\SL(3)/\SO(3)$. We obtain $\det(2w_1+v_1\,|\,2w_2+v_2)=-\det(v_1\,|\,v_2)$ which is equivalent to $\det(w_1+ v_1\,|\,w_2+ v_2) + \det(w_1\,|\,w_2)=0$. It follows that $\det(p,p_1,p_2) = \det(q,q_1,q_2)$ (which is again $\SL(3)$-invariant), as required by \eqref{eq:pq}.

To prove the converse, we start with a surface $F^2 \subset \br^6$ satisfying \eqref{eq:pq} and revert the argument (at the regular points of $M_{F^2}$ --- see Proposition~\ref{p:Tzrk}\eqref{it:Tz2sing}).
\end{proof}

\begin{proof}[Proof of Proposition~\ref{p:Tzrk}]
To prove assertion \eqref{it:nodp0}, suppose that $dp(u)=0$ at some point $u=(u_1,u_2)$. Then for the surface $F^2$ to be regular at $u$, the vectors $\partial_{u_1}q(u)$ and $\partial_{u_2}q(u)$ must be linearly independent. But then from \eqref{eq:pq}, the vector $q(u)$ lies in their span, while $p(u)$ is orthogonal to it, which is a contradiction with $\<p,q\>=1$.

For assertion \eqref{it:eitherh}, we suppose that $\rk (dp)= 1$ on a neighbourhood $U$ of a point $u=(u_1,u_2)$. If at some point $u' \in U$ the vectors $\partial_{u_1}q(u')$ and $\partial_{u_2}q(u')$ are linearly independent, then from \eqref{eq:pq} (as above), the vector $q(u')$ lies in their span, and $p(u')$ is orthogonal to it, contradicting the equation $\<p,q\>=1$. As \eqref{eq:pq} is symmetric relative to $p,q$, we have $dq \ne 0$ by assertion \eqref{it:nodp0}, and so $\rk (dq) = 1$ on $U$. As $F^2$ is regular, we can choose a local parametrisation in such a way that $p=p(u_1),\, q=q(u_2)$, and both $\partial_{u_1}p$ and $\partial_{u_2}q$ are nonzero. Then from \eqref{eq:pq} we find that $\partial_{u_1}p$ and $\partial_{u_2}q$ are constant nonzero vectors in $\br^3$. Then from $\<p,q\>=1$ we obtain that up to affine changes of $u_1$ and $u_2$ and an action of $\mL_g, \, g \in \SL(3)$, one has $p=(1,u_1,0)^t$ and $q=(1,0,u_2)^t$, as required. To see that $M$ is a (domain of a) codimension $1$ solvmanifold, note that $\tM$ is isometric to the solvable subgroup $\oSh \subset \SL(3)$ of matrices of the form $\left(\begin{smallmatrix} b^{-2} & 0 & c \\ d & ba & f \\0 & 0 & ba^{-1} \end{smallmatrix}\right)$, where $c,d,f \in \br$ and $a, b \in \br \setminus \{0\}$ (clearly, $\oSh$ is conjugate to the subgroup of upper-triangular matrices of $\SL(3)$), with the left-invariant metric defined by the pull-back via $\oSh \subset \SL(3) \to \SL(3)/\SO(3)$. Then $M_{F^2}$ is the subgroup of $\oSh$ given by $b=1$.

For assertion~\eqref{it:orTz} we assume that $\rk (dp)(u) = 2$. Then $p=p(u_1,u_2)$ is a regular surface in a neighbourhood of $u$. Then $q=q(u_1,u_2)$ is uniquely determined by \eqref{eq:pq} as $\det(p,\partial_{u_1}p, \partial_{u_2}p) q = \partial_{u_1}p \times \partial_{u_2}p$, and the equation $K\<p,n\>^{-4} = -1$ (where $K$ is the Gauss curvature of the surface $p=p(u_1,u_2)$, and $n$ is a unit normal) follows from the last equation of \eqref{eq:pq}. 

For assertion~\eqref{it:Tz2sing} note that if $\rk (dp)(u) = 2$, then also $\rk (dq)(u) = 2$ (by the last equation of \eqref{eq:pq}). Up to the action by an element $g \in \SL(3)$ and from \eqref{eq:pq} one can assume that $p=q=(1,0,0)^t, \, \partial_{u_1}p=\partial_{u_1}q  =(0,1,0)^t, \, \partial_{u_2}p=(0,0,1)^t,\; \partial_{u_2}q=(0,0,-1)^t$. Then by \eqref{eq:S_pq} we have $S_{(p,q)}= \{x=(P,P^{-1}) \, | \, P= \left(\begin{smallmatrix} 1 & 0 \\ 0 & A \end{smallmatrix}\right), \, A \in \SL(2) \cap \Sym^+(2, \br)\}$. Then all the points on the geodesic lying in $S_{(p,q)}$ and given by  $A=\left(\begin{smallmatrix} \cosh \theta & \sinh \theta \\ \sinh \theta & \cosh \theta \end{smallmatrix}\right), \, \theta \in \br$, are singular points of $M_{F^2}$. Indeed, differentiating the equation $Pq=p$ from \eqref{eq:S_pq} at the points of this geodesic we find that the vector $(\cosh\frac{\theta}{2} \partial_{u_1} + \sinh\frac{\theta}{2} \partial_{u_2}) x$ is tangent to the leaf $S_{(p,q)}$ (compare to Remark~\ref{rem:Legsing}).
\end{proof}

\begin{proof}[Proof of the Corollary] 
  By Remark~\ref{rem:Legsing}, if $\tM=\SU(3)/\SO(3)$, the hypersurface $M$ cannot contain the whole leaf $S_Z$, for any $Z \in F^2$. Similarly, by  Proposition~\ref{p:Tzrk}\eqref{it:Tz2sing}, if $\tM=\SL(3)/\SO(3)$, the hypersurface $M$ cannot contain the whole leaf $S_{(p,q)}$, for any $(p,q) \in F^2$ such that $\rk (dp)=2$. It follows from Proposition~\ref{p:Tzrk}\eqref{it:nodp0}, \eqref{it:eitherh} that a complete Einstein hypersurface $M \subset \SL(3)/\SO(3)$ must be a codimension one solvmanifold. If $\tM$ is different from both $\SU(3)/\SO(3)$ and $\SL(3)/\SO(3)$, the claim follows from Theorem~\ref{th:main}\eqref{thit:solv}.
\end{proof}



\begin{thebibliography}{KNP2}

\bibitem[Ale]{Ale}
D.\,V.\,Alekseevski\u{i}, \emph{The conjugacy of polar decompositions of Lie groups}, Math. USSR-Sb. \textbf{13} (1971), 12--24.

\bibitem[AW1]{AW1}
R.\,Azencott, E.\,Wilson, \emph{Homogeneous manifolds with negative curvature. I}, Trans. Amer. Math. Soc. \textbf{215} (1976), 323--362.

\bibitem[AW2]{AW2}
R.\,Azencott, E.\,Wilson, \emph{Homogeneous manifolds with negative curvature. II}, Mem. Amer. Math. Soc. 8 (1976), no. 178.

\bibitem[Ber]{Ber}
J.\,Berndt, \emph{Hyperpolar homogeneous foliations on symmetric spaces of noncompact type}, Proceedings of the 13th International Workshop on Differential Geometry and Related Fields \textbf{13} (2009), 37--57.

\bibitem[BO]{BO}
J.\,Berndt, C.\,Olmos, \emph{On the index of symmetric spaces}, J. Reine Angew. Math. \textbf{737} (2018), 33-–48.

\bibitem[Bes]{Bes}
A.\,Besse, \emph{Einstein manifolds}. Springer, Berlin, Heidelberg, New York, 1987.

\bibitem[BL]{BL}
C.\,B\"{o}hm, R.\,Lafuente, \emph{Non-compact Einstein manifolds with symmetry}, \href{https://arxiv.org/abs/2107.04210}{arXiv: 2107.04210 [math.DG]}.

\bibitem[CR]{CR}
T.\,Cecil, P.\,Ryan, \emph{Geometry of Hypersurfaces}. Springer Monographs in Mathematics. Springer-Verlag, New York, 2015.

\bibitem[CLS]{CLS}
V.\,Cort\'{e}s, M.\,Lawn, L.\,Sch\"{a}fer, \emph{Affine hyperspheres associated to special para-K\"{a}hler manifolds}, Int. J. Geom. Methods Mod. Phys. \textbf{3} (2006), 995-–1009.

\bibitem[CFG]{CFG}
V.\,Cruceanu, P.\,Fortuny, P.\,M.\,Gadea, \emph{A Survey on Paracomplex Geometry}, Rocky Mountain J. Math. \textbf{26}(1996), 83--115.

\bibitem[DTK]{DTK}
D.\,M.\,DeTurck, J.\,L.\,Kazdan, \emph{Some regularity theorems in Riemannian geometry},
Ann. Sci. \'{E}cole Norm. Sup. (4) \textbf{14} (1981), 249--260.

\bibitem[DSV]{DSV}
A.\,J.\,Di Scala, F.\,Vittone, \emph{Codimension reduction in symmetric spaces}, J. Geom. Phys. \textbf{79} (2014), 29--33.

\bibitem[DDS]{DDS}
J.\,C.\,D\'{\i}az-Ramos, M.\,Dom\'{\i}nguez-V\'{a}zquez, V.\,Sanmart\'{\i}n-L\'{o}pez, \emph{Submanifold geometry in symmetric spaces of noncompact type}, S\~{a}o Paulo J. Math. Sci. \textbf{15} (2021), 75--110.

\bibitem[Fia]{Fia}
A.\,Fialkow, \emph{Hypersurfaces of a Space of Constant Curvature}, Ann. of Math. \textbf{39} (1938), 762--785.

\bibitem[HL]{HL}
R.\,Harvey, H.\,B.\,Lawson, \emph{Calibrated geometries}, Acta Math. \textbf{148} (1982), 47--157.

\bibitem[Has]{Has}
M.\,Haskins, \emph{Special Lagrangian cones}, Amer. J. Math. \textbf{126} (2004), 845--871.

\bibitem[Hel]{Hel}
S.\,Helgason, \emph{Differential geometry, Lie groups, and symmetric spaces}. Pure and Applied Mathematics, 80. Academic Press, Inc. New York--London, 1978.

\bibitem[KNP1]{KNP1}
S.\,Kim, Y.\,Nikolayevsky and J.\,H.\,Park, \emph{Einstein hypersurfaces of Damek-Ricci spaces}, J. Geom. Phys. \textbf{167} (2021), 104278.

\bibitem[KNP2]{KNP2}
S.\,Kim, Y.\,Nikolayevsky and J.\,H.\,Park, \emph{Einstein hypersurfaces of the Cayley projective plane}, Differ. Geom. Appl. \textbf{69} (2020), 101594.

\bibitem[Kob]{Kob}
S.\,Kobayashi, \emph{Isometric imbeddings of compact symmetric spaces}, T\^{o}hoku Math. J. \textbf{20} (1968), 21--25.

\bibitem[Kon]{Kon}
M.\,Kon, \emph{Pseudo-Einstein real hypersurfaces in complex space forms}, J. Differ. Geom. \textbf{14} (1979), 339--354.

\bibitem[LPS]{LPS}
B.\,Leandro, R.\,Pina, J.\,P.\,dos Santos, \emph{Einstein hypersurfaces of $S^n \times \br$ and $H^n \times \br$}, Bull. Braz. Math. Soc. (N.S.) \textbf{52} (2021), 537--546.

\bibitem[Leu]{Leu}
D.\,Leung, \emph{The reflection principle for minimal submanifolds of Riemannian symmetric spaces}, J. Differ. Geom. \textbf{8} (1973), 153--160.

\bibitem[MT]{MT}
F.\,Manfio, R.\,Tojeiro, \emph{Hypersurfaces with constant sectional curvature of $S^n \times \br$ and $H^n \times \br$}, Ill. J. Math. \textbf{55} (2011), 397--415.

\bibitem[MP]{MP}
A.\,Martinez, J.D.\,P\'{e}rez, \emph{Real hypersurfaces in quaternionic projective space}, Ann. Mat. Pura Appl. (4) \textbf{145} (1986), 355--384.

\bibitem[Mon]{Mon}
S.\,Montiel, \emph{Real hypersurfaces of a complex hyperbolic space}, J. Math. Soc. Japan \textbf{37} (1985), 515--535.

\bibitem[Mry]{Mry}
C.\,B.\,Morrey, \emph{Multiple Integrals in the Calculus of Variations}. Springer-Verlag, Berlin, Heidelberg, 1966.

\bibitem[Mor]{Mor}
J.-M.\,Morvan, \emph{Classe de Maslov d'une immersion Lagrangienne et minimalit\'{e}}, C. R. Acad. Sci. Sér. I, \textbf{292} (1981), 633--636.

\bibitem[Nik]{Nik}
Y.\,Nikolayevsky, \emph{Totally geodesic hypersurfaces of homogeneous spaces}, Israel J. Math. \textbf{207} (2015), 361--375.

\bibitem[NS]{NS}
K.\,Nomizu, T.\,Sasaki, \emph{Affine differential geometry}. Cambridge Tracts in Mathematics, 111. Cambridge University Press, Cambridge, 1994.

\bibitem[On]{On}
A.\,L.\,Onishchik, \emph{On totally geodesic submanifolds of symmetric spaces} (Russian), Geom. Metody Zadachakh Algebry Anal. \textbf{2} (1980), 64-–85.

\bibitem[OP]{OP}
M.\,Ortega, J.D.\,P\'{e}rez, \emph{On the Ricci tensor of a real hypersurface of quaternionic hyperbolic space}, Manuscripta Math. \textbf{93} (1997), 49--57.

\bibitem[RR]{RR}
\'{A}.\,Ramos, J.\,Ripoll, \emph{An extension of Ruh-Vilms' theorem to hypersurfaces in symmetric spaces and some applications}, Trans. Amer. Math. Soc. \textbf{368} (2016), 4731--4749.

\bibitem[T1]{T1}
H.\,Tamaru, \emph{The local orbit types of symmetric spaces under the actions of the isotropy subgroups}, Differential Geom. Appl. \textbf{11} (1999), 29--38.

\bibitem[T2]{T2}
H.\,Tamaru, \emph{Parabolic subgroups of semisimple Lie groups and Einstein solvmanifolds}, Math. Ann. \textbf{351} (2011), 51--66.

\bibitem[Tz]{Tz}
M.\,G.\,Tzitz\'{e}ica, \emph{Sur une nouvelle classe de surfaces}, Rend. Circ. Matem. Palermo \textbf{25} (1908), 180--187.

\bibitem[Wol]{Wol}
T.\,H.\,Wolter, \emph{Einstein Metrics on solvable groups}, Math. Z. \textbf{206} (1991), 457--471.

\bibitem[Yau]{Yau}
S.\,T.\,Yau, \emph{Submanifolds with constant mean curvature. I}, Amer. J. Math. \textbf{96} (1974), 346--366; \emph{II}, Amer. J. Math. \textbf{97} (1975), 76--100.


\end{thebibliography}
\end{document}